\newtheorem{thm}{Theorem}[section]
\newtheorem{theorem}[thm]{Theorem}
\newtheorem{corollary}[thm]{Corollary}
\newtheorem{lemma}[thm]{Lemma}
\newtheorem{proposition}[thm]{Proposition}
\theoremstyle{definition}
\theoremstyle{remark}
\newtheorem{remark}[thm]{Remark}
\newenvironment{theorem*}[1]{\smallskip\noindent{\bf #1.}\it}{\medskip}
\numberwithin{equation}{section}
\newcommand\tr{\operatorname{tr}}
\newcommand{\slim}{\operatornamewithlimits{s-lim}}
\newcommand{\re}{{\mathrm e}}
\newcommand{\ri}{{\mathrm i}}
\newcommand{\ls}{\operatorname{ls}}
\newcommand{\bC}{{\mathbb C}}
\newcommand{\bN}{{\mathbb N}}
\newcommand{\bR}{{\mathbb R}}
\newcommand{\bZ}{{\mathbb Z}}
\newcommand\cF{{\mathcal F}}
\newcommand\cI{{\mathcal I}}
\newcommand\cP{{\mathcal P}}
\newcommand\fS{{\mathfrak S}}
\newcommand{\sA}{{\mathscr A}}
\newcommand{\sB}{{\mathscr B}}
\newcommand{\sC}{{\mathscr C}}
\newcommand{\sE}{{\mathscr E}}
\newcommand{\sF}{{\mathscr F}}
\newcommand{\sL}{{\mathscr L}}
\newcommand{\sM}{{\mathscr M}}
\newcommand{\sN}{{\mathscr N}}
\newcommand{\sS}{{\mathscr S}}
\newcommand\al{\alpha}
\newcommand\be{\beta}
\newcommand\la{\lambda}
\newcommand\eps{\varepsilon}
\newcommand\si{\sigma}
\newcommand\bfe{{\mathbf e}}
\newcommand\bx{{\mathbf x}}
\newcommand\by{{\mathbf y}}
\newcommand\bal{\bm{\alpha}}
\newcommand\bbe{\bm{\beta}}
\newcommand\bla{\bm{\lambda}}
\newcommand\bmu{\bm{\mu}}
\newcommand\bnu{\bm{\nu}}
\newcommand\1{\mathbf{1}}
\begin{document}

\title[Inverse spectral problem]{Analyticity and uniform
    stability of the inverse singular Sturm--Liouville spectral problem}%
\author{Rostyslav O.~Hryniv}%
\address{Institute for Applied Problems of Mechanics and Mathematics,
3b~Naukova st., 79601 Lviv, Ukraine \and Institute of Mathematics, the University of Rzesz\'{o}w, 16\,A Rejtana al., 35-959 Rzesz\'{o}w, Poland}%
\email{rhryniv@iapmm.lviv.ua}%

\thanks{}%
\subjclass{Primary 34A55, Secondary 34L40, 47E05}%
\keywords{Inverse spectral problems, Sturm--Liouville operators, singular potential, analyticity, uniform stability}%

\date{\today}%

\begin{abstract}
We prove that the potential of a Sturm--Liouville operator depends analytically and Lipschitz continuously on the spectral
data (two spectra or one spectrum and the corresponding norming
constants). We treat the class of operators with real-valued distributional potentials
in $W^{s-1}_2(0,1)$, $s\in[0,1]$.
\end{abstract}
\maketitle



\section{Introduction}

In this paper we shall establish analyticity and uniform stability of
solutions of two inverse spectral problems for a certain class of
Sturm--Liouville operators on the interval~$[0,1]$. The (direct)
spectral problems to be considered are
\begin{equation}\label{eq:intr.de}
    -y''(x) + q(x)y(x) = \lambda y(x),
\end{equation}
subject to some boundary conditions, where $\la\in\bC$ is the
spectral parameter and~$q$ is a real-valued potential that might be
regular (i.e.\ integrable) or singular (e.g.\ a distribution). For
simplicity, we shall restrict ourselves to the cases of the
Dirichlet boundary conditions
\begin{equation}\label{eq:intr.Dbc}
        y(0)=y(1)=0
\end{equation}
and the Dirichlet--Neumann boundary conditions
\begin{equation}\label{eq:intr.DNbc}
    y(0)=y'(1)=0,
\end{equation}
although other boundary conditions can be treated similarly. (We note
here that the derivative in~\eqref{eq:intr.DNbc} must be replaced with
a quasi-derivative if $q$ is singular, see Section~\ref{sec:pre} for
details.) We shall always denote by
 $\la_1<\la_2<\dots$ the eigenvalues of
problem~\eqref{eq:intr.de}, \eqref{eq:intr.Dbc} and by
 $\mu_1 < \mu_2 < \dots$ those of
problem~\eqref{eq:intr.de}, \eqref{eq:intr.DNbc}.

In 1946, Borg~\cite{Bo} proved that the spectrum of
problem~\eqref{eq:intr.de} corresponding to one fixed set of boundary
conditions, e.g., the Dirichlet~\eqref{eq:intr.Dbc} or the
Dirichlet--Neumann~\eqref{eq:intr.DNbc} ones, does not determine the
potential~$q$ uniquely. (An exceptional situation where only one
spectrum---namely, that for the Neumann boundary conditions---determines the problem was pointed out in 1929
by~Ambartsumyan~\cite{Am}.) However, two such spectra already suffice
to reconstruct the potential~$q$ unambiguously, as follows from the
inverse spectral theory for Sturm--Liouville and Schr\"odinger
operators that emerged in the early 1950-ies from the work of Gelfand
and Levitan~\cite{GL}, Marchenko~\cite{Ma1}, and Krein~\cite{Kr} and
has been extensively developed in many directions since then. This
theory gives an efficient algorithm reconstructing the potential~$q$
from the spectra~$(\la_n)_{n\in\bN}$ and $(\mu_n)_{n\in\bN}$ and also
describes completely the set of possible spectra. For instance, a
typical result~\cite[Thm.~3.4.1]{Ma} reads

\begin{theorem*}{Theorem~A}
For real numbers $\la_1 <\la_2<\dots$ and $\mu_1 < \mu_2 < \dots$ to
give all the Dirichlet and Dirichlet--Neumann eigenvalues of the
Sturm--Liouville problem~\eqref{eq:intr.de} with a real-valued $q\in
L_2(0,1)$, it is necessary and sufficient that these numbers
\begin{itemize}
\item[(i)] interlace, i.e.,
    $\mu_n < \la_n< \la_{n+1}$ for all $n\in\bN$, and
\item[(ii)] have the representation
    \[
        \la_n = \pi^2n^2 + A + a_n,
    \qquad
        \mu_n = \pi^2(n-\tfrac12)^2 + A + b_n,
    \]
    where $A\in\bR$ and $(a_n)_{n\in\bN}$ and $(b_n)_{n\in\bN}$
    are some $\ell_2$-sequences.
\end{itemize}
\end{theorem*}

The induced mapping from the spectral data
$\bigl((\la_n),(\mu_n)\bigr)$ into the potentials~$q$ provides a
solution to the inverse spectral problem and has been extensively
studied in the literature. In particular, this mapping is shown to be
locally continuous in a certain sense, which yields local stability of
the inverse spectral problem, see,
e.g.,~\cite{Al,Bo,FY,Rya1,Rya2,Rya3,McL,Mal,PT,Ho1,Ho2,Ho3,HK,Mi,Ha,MW,Yu} and
the references therein. Here we introduce a metric on the set of the
spectral data $\bigl((\la_n),(\mu_n)\bigr)$ by e.g.\ identifying such
data with the triplets~$\bigl(A,(a_n),(b_n)\bigl)\in\bR\times
\ell_2\times\ell_2$ in the representation of item~(ii) above.
Typically, this local stability states that, for a fixed $M>0$,
there are positive~$\eps$ and $L$ with the following property: if
potentials $q_1$ and $q_2$ are such that $\|q_1\|_{L_p(0,1)}\le M$
and $\|q_2\|_{L_p(0,1)}\le M$ and the corresponding spectral
data $\bnu_1:=\bigl((\la_{1,n}),(\mu_{1,n})\bigr)$ and
$\bnu_2:=\bigl((\la_{2,n}),(\mu_{2,n})\bigr)$ satisfy
$\|\bnu_1-\bnu_2\|\le\eps$, then
\begin{equation}\label{eq:intr.cont}
    \|q_1-q_2\|_{L_p(0,1)} \le L \|\bnu_1-\bnu_2\|
\end{equation}
for a suitable $p\in[1,\infty]$. For instance, local stability results
with $p=2$ were established in~\cite{Rya3,McL} in the regular case
$q\in L_2(0,1)$, and in~\cite{An,CM,McL} for impedance Sturm--Liouville
operators. The cases $p\ge2$ and $p=\infty$ were treated in~\cite{HK} and in~\cite{Mi,Ha} respectively; earlier
Hochstadt in~\cite{Ho2,Ho3} proved stability if only finitely many
eigenvalues in one spectrum are changed. The papers~\cite{Ho1,Rya1}
studied to what extent only finitely many eigenvalues in one or both
spectra determine the potential, and the latter problem in the
non-self-adjoint setting was recently discussed in~\cite{MW}. In~\cite{Mal}, stability of reconstruction of general first-order systems from various given data was investigated. Also,
stability of the inverse spectral problems on semi-axis was proved
in~\cite{MM,Rya2}, and the inverse scattering and inverse resonance problems on the line and half-line were studied in~\cite{DMS,Hi} and \cite{Ko,MSW,MW} respectively.

However, the above results cannot be considered satisfactory, as they
refer to the norm of the potential~$q$ to be recovered and thus
specify neither the allowed noise level~$\eps$ nor the Lipschitz
constant~$L$. Therefore we need a global stability result that
asserts~\eqref{eq:intr.cont} whenever the spectral data $\bnu_1$ and
$\bnu_2$ run through bounded sets~$\sN$ and with $L$ only depending
on~$\sN$.

Such global stability of the inverse problem of reconstructing the
potential~$q$ of a Sturm--Liouville equation from its Dirichlet and
Dirichlet--Neumann spectra was recently established by Savchuk and
Shkalikov in the paper~\cite{SSstab}. In fact, the authors
considered therein the class of problems with distributional~$q$ in the
Sobolev space~$W_2^{s-1}(0,1)$, $s\in(0,1]$. Two typical and most
important examples of singular potentials belonging to~$W^{s-1}_2(0,1)$
only if $s<\tfrac12$ are the Dirac delta-function~$\delta(\cdot-a)$ and
the Coulomb-type interaction~$1/(\cdot-a)$, $a\in(0,1)$. Recently,
singular Sturm--Liouville operators of various types including e.g.\
operators with distributional potentials in~$W_2^{s-1}(0,1)$
~\cite{Sav,SS99,SS03,SSinv,SSas,HMinv,HMscale,HMas} and operators in
impedance form~\cite{An,CM,RS,AHMimp} have attracted considerable
interest, which is partly motivated by their importance for many
applied problems in classical and quantum mechanics, scattering theory
etc. Both the direct and the inverse spectral theory of such singular
operators were developed in detail; we refer the reader to
e.g.~\cite{SS03,SSinv,SSmap,HMinv}.

The purpose of the present paper is two-fold. Firstly, we give an
alternative proof of the global stability of the inverse spectral
problem for Sturm--Liouville operators with distributional potentials
in~$W_2^{s-1}(0,1)$ for $s\in[0,1]$, i.e., including the extreme case
$s=0$ (Theorem~\ref{thm:pre.main}). In fact, we prove that the inverse
spectral mapping is analytic and locally Lipschitz continuous.
Secondly, we prove analogous results for the inverse spectral problem
of reconstructing the potential~$q$ from the corresponding Dirichlet
spectrum~$(\la_n)$ and norming constants~$(\al_n)$ defined in
Section~\ref{sec:pre} (Theorem~\ref{thm:pre.norm}); the local stability
in this setting was established e.g.\ by McLaughlin in~\cite{McL}. Our
approach is completely different from that of the paper~\cite{McL}
(which follows the method developed by P\"oschel and Trubowitz in the
book~\cite{PT}) and of the paper~\cite{SSstab} (which uses the modified
Pr\"ufer angle) and is based on a generalization of the classical
Gelfand--Levitan--Marchenko method as developed e.g. in~\cite{HMinv}.
The proof essentially relies on the fact that certain sets of sines and
cosines form Riesz bases of $L_2(0,1)$ with uniformly bounded upper and
lower bounds~\cite{Hriesz}. As in~\cite{SSstab} we prove first the
required results for the extreme cases~$s=0$ and~$s=1$ and then use the
Tartar nonlinear interpolation~\cite{Ta} to cover the intermediate~$s$.

The paper is organised as follows. In the next section we formulate the
main results and define suitable topologies on the sets of spectral
data. Section~\ref{sec:GLM} describes the reconstruction method based
on the Gelfand--Levitan--Marchenko integral equation.
Theorem~\ref{thm:pre.norm} on analyticity and local Lipschitz
continuity of the inverse spectral mapping using the Dirichlet spectrum
and norming constants is proved in Section~\ref{sec:norm}, and
Section~\ref{sec:two} studies dependence of the norming constants on
two spectra. Finally, Appendices~\ref{sec:RB}, \ref{sec:sobolev},
and~\ref{sec:aux} contain some auxiliary results on Riesz bases,
Sobolev spaces, and Fourier transforms therein.

\emph{Notations.} Throughout the paper, $\sqrt z$ shall denote the
principal branch of the square root that takes positive values
for~$z>0$. For a Hilbert space~$H$, we denote by $\mathscr{B}(H)$ the algebra of bounded linear operators acting in~$H$.


\section{Preliminaries and main results}\label{sec:pre}

In this section we define explicitly the class of Sturm--Liouville
operators to be studied and state the main results for the inverse
problems.

\subsection{The operators}
Given a real-valued distribution $q$ in $W_2^{-1}(0,1)$, we define
Sturm--Liouville operators~$T$ corresponding to the differential
expression
\begin{equation}\label{eq:pre.de}
    -\frac{d^2}{dx^2} + q
\end{equation}
by means of regularisation by quasi-derivatives suggested by Shkalikov
and Savchuk~\cite{SS99,SS03}. We take a real-valued distributional
primitive $\si\in L_2(0,1)$ of~$q$ and set
\[
    l_\si (y) := - (y'-\si y)' - \si y'
\]
for $y\in W^1_2(0,1)$ such that the
\emph{quasi-derivative}~$y^{[1]}:=y'-\si y$ is absolutely continuous
and $l_\si (y)$ is in~$L_2(0,1)$. We then define the operators
$T_{\mathrm D}=T_{\mathrm D}(\si)$ and $T_{\mathrm N}=T_{\mathrm
N}(\si)$ as the restrictions of $l_\si$ onto the functions satisfying
the boundary conditions $y(0)=y(1)=0$ and $y(0)=y^{[1]}(1)=0$
respectively.

Since $l_\si(y) = -y''+qy$ in the sense of distributions, these
operators coincide with the classical Sturm--Liouville operators if
$q\in L_1(0,1)$. Moreover, the operators $T_{\mathrm D}(\si)$ and
$T_{\mathrm N}(\si)$ depend continuously on $\si\in L_2(0,1)$ in the
uniform resolvent sense~\cite{SS99,SS03}. Therefore $T_{\mathrm
D}(\si)$ and $T_{\mathrm N}(\si)$ are the most natural
Sturm--Liouville operators associated with differential
expression~\eqref{eq:pre.de} for $q\in W^{-1}_2(0,1)$.

We observe that although the differential expression~$l_\si$ is
independent of the particular choice of the primitive $\si$ of $q$, the
boundary conditions for the operator~$T_{\mathrm N}(\si)$ and the
norming constants for $T_{\mathrm D}(\si)$ introduced below do depend
on~$\si$ rather than on~$q$. Therefore it is the function~$\si$, and
not $q$, that has to be reconstructed in the inverse spectral problem
for singular Sturm--Liouville operators under consideration. We shall
call $\si$ the \emph{regularized potential} of the Sturm--Liouville
operators $T_{\mathrm D}(\si)$ and $T_{\mathrm N}(\si)$.


\subsection{Spectral data}\label{ssec:sp-data}
It is known~\cite{SS99,SS03} that for a real-valued $\si\in
L_2(0,1)$ the operators~$T_{\mathrm D}(\si)$ and $T_{\mathrm
N}(\si)$ are self-adjoint, bounded below, and have simple discrete
spectra. We denote by $\la_1< \la_2 < \dots$ the eigenvalues of the
operator~$T_{\mathrm D}$ and by $\mu_1<\mu_2< \dots$ those of
$T_{\mathrm N}$ and recall that these eigenvalues interlace, i.e.,
$\mu_n<\la_n< \mu_{n+1}$ for all $n\in\bN$, and satisfy the
relations
\begin{equation}\label{eq:2.sp}
    \sqrt{\la_n} = \pi n + \rho_{2n},
    \qquad
    \sqrt{\mu_n} = \pi (n-\tfrac12) + \rho_{2n-1}
\end{equation}
with some $\ell_2$-sequence $(\rho_n)$~\cite{SS99,SS03}. If~$q$ belongs
to~$W_2^{s-1}(0,1)$ for some~$s\in[0,1]$, then by~\cite{HMas} there
exists a unique function~$\si^*\in W_2^s(0,1)$ such that $\rho_n =
s_n(\si^*)$,  where
\[
    s_n(f) := \int_0^1 f(x) \sin\pi nx\,dx
\]
is the $n$-th sine Fourier coefficient of~$f$.

In the paper~\cite{SSinv}, the mapping
 $\sF_{\textrm{sin}}:\,f \mapsto (s_n(f))_{n=1}^\infty$
defined on the Sobolev spaces~$W_2^s(0,1)$ was studied in detail for
all $s\ge0$. For $s\in [0,1]$ the results of~\cite{SSinv} can be
specified as follows. Denote by $W_{2,0}^1(0,1)$ the subspace of
$W_2^1(0,1)$ consisting of functions that vanish at the endpoints
$x=0$ and $x=1$ and set
\[
    W_{2,0}^s(0,1) := \bigl[ W_{2,0}^1(0,1),L_2(0,1)\bigr ]_s,
        \qquad s\in(0,1),
\]
to be the interpolation space~\cite[Ch.~I.9]{LM}. In particular, for
$s<\tfrac12$ the space $W_{2,0}^s(0,1)$ coincides with $W_2^s(0,1)$, for
$s>\tfrac12$ it is the proper subspace of the latter consisting of functions
vanishing at the endpoints, and $W_{2,0}^{1/2}(0,1)$ is a proper
subspace of $W_2^{1/2}(0,1)$ defined by more complicated conditions. We
also set
\[
    \ell_2^s:= \{ \bx = (x_n)_{n=1}^\infty
                \mid \|\bx\|^2_s:= \sum n^{2s}|x_n|^2<\infty\};
\]
this is a Hilbert space under the scalar product
 \(
    (\bx,\by)_s := \sum n^{2s} x_n \overline{y_n}
 \)
for $\bx:=(x_n)_{n=1}^\infty$ and  $\by:=(y_n)_{n=1}^\infty$.
The interpolation theory then shows that for all $s\in[0,1]$ the
mapping $\sF_{\mathrm{sin}}$ is an isomorphism between
$W_{2,0}^s(0,1)$ and $\ell_2^s$; moreover, under an equivalent norm
on $W_{2,0}^s(0,1)$ this mapping becomes isometric.

We set $P_0(x) := 1-x$ and $P_1(x)= x$; then for an arbitrary $f\in W_2^1(0,1)$ the function
\[
    f_0(x) := f(x) - f(0) P_0(x) - f(1) P_1(x)
\]
belongs to~$W_{2,0}^1(0,1)$;
thus $\sF_{\mathrm{sin}}(f) \in \ell_2^1 \dotplus \ls\{\bfe_0,\bfe_1\}$, with
\[
    \bfe_j:= \sF_{\mathrm{sin}}(P_j)
    = \bigl( (-1)^{j(n-1)} /(\pi n) \bigr)_{n=1}^\infty
\]
and $\ls S$ standing for the linear span of a set~$S$.
We now set $\hat \ell_2^s := \ell_2^s$ for $s\in[0,\tfrac12)$ and
\[
    \hat \ell_2^s := \ell_2^s \dotplus \ls\{\bfe_0,\bfe_1\}
\]
for $s\in[\tfrac12,1]$; in this latter case the scalar product
in~$\hat\ell_2^s$ is introduced via
\[
     \bigl( \bx+ a_0\bfe_0+a_1\bfe_1, \by+ b_0\bfe_0+b_1\bfe_1
     \bigr)_s
        := (\bx,\by)_s + a_0\overline{b_0} + a_1\overline{b_1}.
\]
Then by Lemma~1 of~\cite{SSinv} we see that $\sF_{\mathrm{sin}}$
extends to an isomorphism of $W_2^s(0,1)$ and $\hat \ell_2^s$ for
all $s\in[0,1]$, which, moreover, is even isometric under an
equivalent norm on~$W_2^s(0,1)$. Therefore, for $q\in W_2^{s-1}(0,1)$ the sequence~$(\rho_n)$ defined by~\eqref{eq:2.sp} belongs to~$\hat \ell_2^s$.

Next we define the norming constants. For a nonzero $z\in\bC$, we
denote by $y(\cdot,z)$ a solution of the equation $l_\si(y) = z^2 y$
satisfying the initial conditions~$y(0)=0$ and $y^{[1]}(0)=z$. Then
$y(\cdot,\sqrt{\la_n})$ is an eigenfunction corresponding to the
eigenvalue~$\la_n$ of the operator~$T_{\mathrm D}$, and we call the
number $\al_n := \|y(\cdot,\sqrt{\la_n})\|_{L_2}^{-2}$ the
\emph{norming constant} for this eigenvalue. It is known~\cite{HMscale}
that if $q\in W_2^{s-1}(0,1)$ for $s\in [0,1]$, then
\[
    \al_n = 2 + \beta_{2n},
\]
where $\beta_{2n}$ is the $2n$-th cosine Fourier coefficient
$c_{2n}(\tilde \si)$ for a unique function $\tilde \si\in
W_2^s(0,1)$ of zero mean that is even with respect to $x=\tfrac12$, i.e.,
$\tilde\sigma(1-x)= \tilde\sigma(x)$; here we set
\[
    c_n(f) := \int_0^1 f(x) \cos\pi nx\,dx.
\]
By the arguments similar to the above the mapping
 $\sF_{\mathrm{cos}}:\, f
    \mapsto \bigl( c_{n}(f)\bigr)_{n=1}^\infty$
is an isomorphism between the subspace
 $\widetilde W_{2,\mathrm{even}}^s(0,1)$
of even (with respect to $x=\tfrac12$) functions in $W_2^s(0,1)$ of zero
mean and the subspace $\ell_{2,\mathrm{even}}^s$ of sequences in
$\ell_2^s$ with vanishing odd entries. Clearly, the spaces~$\ell_{2,\mathrm{even}}^s$ and $\ell_2^s$ are isomorphic.

The norming constants $\al_n$ can be determined from the spectra of the
operators~$T_{\mathrm D}$ and $T_{\mathrm N}$ as follows. We set
$S(z):= y(1,z)$ and $C(z):=y^{[1]}(1,z)$. Due to the integral representations~\eqref{eq:pre.y} and
\eqref{eq:pre.y'} below, $S$ and $C$ are entire functions of exponential
type~$1$ with zeros $0,\pm \sqrt{\la_n}$ and $0,\pm \sqrt{\mu_n}$
respectively. The Hadamard canonical products of~$S$ and $C$ are
\begin{equation}\label{eq:pre.PhiPsi}
    S(z) = z \prod_{n=1}^\infty \frac{\la_n - z^2}{\pi^2n^2},
    \qquad
    C(z) = z \prod_{n=1}^\infty
        \frac{\mu_n-z^2}{\pi^2(n-\tfrac12)^2},
\end{equation}
so that $S$ and $C$ are uniquely determined by their zeros. Finally, we
have~\cite{HMtwo}
\begin{equation}\label{eq:pre.al}
    \al_n = \frac{2\sqrt{\la_n}}{\dot{S}(\sqrt{\la_n})C(\sqrt{\la_n})},
\end{equation}
where the dot denotes the derivative in~$z$.

\subsection{Main results}
Without loss of generality, we may consider only uniformly positive
operators, adding the number $\mu_1 + 1$ to $q$ and all the Dirichlet
and Dirichlet--Neumann eigenvalues as required. Respectively, we
introduce the set~$\sN^s$ of data
$\bigl((\la_n)_{n\in\bN},(\mu_n)_{n\in\bN}\bigr)$ with the following
properties:
\begin{itemize}
\item [(N1)] $\mu_1\ge1$ and the sequences $(\la_n)$ and $(\mu_n)$ strictly interlace,
    i.e., $\mu_n<\la_n<\mu_{n+1}$ for all $n\in\bN$;
\item [(N2)] the numbers $\rho_{2n}:= \sqrt{\la_n} - \pi n$ and
    $\rho_{2n-1}:=\sqrt{\mu_n}- \pi(n-\tfrac12)$, $n\in\bN$, form a sequence
    $(\rho_n)_{n\in\bN}$ in $\hat \ell_2^s$.
\end{itemize}
In this way every element $\bnu:=(\bla,\bmu)$~of $\sN^s$ is
identified with a sequence $(\rho_n)$ in $\hat \ell_2^s$ or,
equivalently, with a unique function~$f\in W_2^s(0,1)$
satisfying~$s_n(f) = \rho_n$. This induces a topology on~$\sN^s$;
moreover, $\|(\rho_n)\|_s$ or $\|f\|_{W_2^s}$ define equivalent
metrics on~$\sN^s$.

According to~\cite{HMscale}, every element of~$\sN^s$ consists of
eigenvalue sequences of the operators $T_{\mathrm D}(\si)$ and
$T_{\mathrm N}(\si)$ corresponding to some real-valued regularized
potential~$\si\in W_2^{s}(0,1)$ and, conversely, for every real-valued
$\si\in W_2^{s}(0,1)$ with $T_{\mathrm N}(\si)\ge I$ the corresponding
spectral data form an element of~$\sN^s$. When the regularized
potential~$\si$ varies over a bounded subset of~$W_2^s(0,1)$, then the
main theorem of the paper~\cite{SSas} implies that the corresponding
spectral data~$\bigl((\la_n),(\mu_n)\bigr)$ remain in a bounded subset
of $\sN^s$. Moreover, the Pr\"ufer angle technique used in~\cite{SSas}
yields then a positive $h$ such that all the corresponding spectral
data~$\bigl((\la_n),(\mu_n)\bigr)$ are $h$-\emph{separated}, i.e., such
that the inequalities $\sqrt{\mu_{n+1}}-\sqrt{\la_n}\ge h$ and
$\sqrt{\la_n}-\sqrt{\mu_n}\ge h$ hold for every $n\in\bN$. Summarizing,
we conclude that the uniform stability of the inverse spectral problem
we would like to establish is only possible on the convex closed
sets~$\sN^s(h,r)$ of the spectral data consisting of all elements
of~$\sN^s$ that are $h$-separated and satisfy $\|(\rho_n)\|_s\le r$.

In these notations, one of the main results of the paper reads as follows.

\begin{theorem}\label{thm:pre.main}
For every $s\in[0,1]$, $h\in(0,\pi/2)$, and $r>0$, the
mapping
\[
    \sN^s(h,r) \ni \bnu \mapsto \si \in W_2^s(0,1)
\]
is analytic and Lipschitz continuous.
\end{theorem}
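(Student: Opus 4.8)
The plan is to factor the inverse spectral mapping through the norming constants and thereby reduce the statement to the companion result on reconstruction from one spectrum and the norming constants. Concretely, given $\bnu=(\bla,\bmu)\in\sN^s(h,r)$, I would first produce the Dirichlet spectrum $(\la_n)$, which is already part of $\bnu$, together with the norming constants $(\al_n)$ obtained from both spectra through \eqref{eq:pre.al}, and then feed the pair $\bigl((\la_n),(\al_n)\bigr)$ into the reconstruction of Theorem~\ref{thm:pre.norm}. Since a composition of analytic (resp.\ Lipschitz) maps is again analytic (resp.\ Lipschitz), it suffices to establish each factor separately: (a) the map $\bnu\mapsto(\al_n)$ from $\sN^s(h,r)$ into the relevant bounded subset of $\hat\ell_2^s$ is analytic and Lipschitz, which is the business of Section~\ref{sec:two}; and (b) the inverse spectral map from one spectrum and norming constants to $\si$ is analytic and Lipschitz, which is Theorem~\ref{thm:pre.norm}, proved in Section~\ref{sec:norm}.

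For factor (a) I would work from the Hadamard products \eqref{eq:pre.PhiPsi}, which express the entire functions $S$ and $C$ analytically in terms of their zeros $\pm\sqrt{\la_n}$ and $\pm\sqrt{\mu_n}$, i.e.\ in terms of the coordinates $(\rho_n)\in\hat\ell_2^s$ labelling $\bnu$. The key point is that the $h$-separation built into $\sN^s(h,r)$ keeps $\sqrt{\la_n}$ uniformly away from every $\sqrt{\mu_k}$, so that $C(\sqrt{\la_n})$ stays bounded away from zero while $\dot S(\sqrt{\la_n})$ stays bounded, uniformly in $n$ and in $\bnu$; combined with the uniform bound $r$ on $\|(\rho_n)\|_s$ this yields locally uniform convergence of the products and of their logarithmic derivatives. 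Differentiating the logarithms of \eqref{eq:pre.PhiPsi} and inserting into \eqref{eq:pre.al} then exhibits each $\al_n$ as an analytic function of the $\rho_k$ with a remainder controlled uniformly in $\ell_2^s$, giving analyticity and the Lipschitz bound for the sequence $(\al_n)$ on the bounded set $\sN^s(h,r)$.

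Factor (b) is the heart of the matter and is where the generalized Gelfand--Levitan--Marchenko scheme of \cite{HMinv} enters. From the data $\bigl((\la_n),(\al_n)\bigr)$ I would form the GLM kernel $F(x,t)=\sum_n\bigl(\al_n\sin\sqrt{\la_n}x\,\sin\sqrt{\la_n}t-2\sin\pi nx\,\sin\pi nt\bigr)$, view the associated GLM integral equation as an operator equation $(I+\mathcal{F})K=-F$ in a suitable Hilbert space of kernels, and recover $\si$ from the diagonal values $K(x,x)$ of the solution. Analyticity then follows because $\mathcal{F}$ depends analytically on the data and, on the invertibility region, the inverse $(I+\mathcal{F})^{-1}$ is an analytic function of $\mathcal{F}$ via the Neumann series, so that $\si=\si(\bnu)$ is a composition of analytic maps; Lipschitz continuity follows from a uniform bound on $\|(I+\mathcal{F})^{-1}\|$ over $\sN^s(h,r)$. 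The indispensable ingredient for this uniform bound is the fact that, under the constraints defining $\sN^s(h,r)$, the relevant perturbed systems of sines and cosines form Riesz bases of $L_2(0,1)$ with Riesz bounds depending only on $h$ and $r$ \cite{Hriesz}, which makes $I+\mathcal{F}$ uniformly boundedly invertible.

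I expect the genuinely hard step to be exactly this uniform invertibility of $I+\mathcal{F}$ together with uniform control of its inverse over the \emph{entire} set $\sN^s(h,r)$, rather than merely in a neighbourhood of a fixed potential as in the classical local theory; everything else—analytic dependence of the kernel and extraction of $\si$ from $K$—is comparatively routine once the uniform Riesz bounds are in hand. Finally, following \cite{SSstab}, I would carry out (a) and (b) in full only for the two extreme indices $s=0$ and $s=1$, where the spaces involved are the most transparent, and reach the intermediate range $s\in(0,1)$ by Tartar's nonlinear interpolation \cite{Ta}. Since the reconstruction map is itself independent of $s$—only its domain and target vary—interpolation transfers the Lipschitz estimates from the endpoint couples to every intermediate space, and the same device applied to the holomorphic extension of the map yields analyticity throughout.
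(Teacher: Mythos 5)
Your overall architecture coincides exactly with the paper's: factor the map through the norming constants, prove Theorem~\ref{thm:pre.norm} by the GLM scheme with the uniform Riesz-basis bounds of \cite{Hriesz} supplying uniform invertibility of the GLM operator, and reach intermediate $s$ by Tartar interpolation. The genuine gap is in your factor (a). Uniform convergence of the Hadamard products \eqref{eq:pre.PhiPsi} and of their logarithmic derivatives yields, at best, bounds on each individual $\al_n$ that are uniform in $n$ and in $\bnu$ --- this is precisely what the paper's Lemma~\ref{lem:two.PhiPsi} extracts from the products, and there it serves only to prove uniform positivity of the $\al_n$. It does not show that $(\al_n-2)$ lies in the weighted space $\ell_2^s$, let alone that $\bnu\mapsto(\al_n-2)$ is Lipschitz in the $\ell_2^s$ norm: already for $s=1$ you would need $\bigl(n(\al_n-2)\bigr)\in\ell_2$ with differences controlled by $\|\bnu_1-\bnu_2\|_{\sN^1}$, and termwise product estimates give no handle on such weighted square-summability. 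The paper's actual route is structurally different: it represents $\dot S(z)$ and $C(z)/z$ as $\cos z+\int_0^1 g(t)\cos z(1-2t)\,dt$ with $g$ built from the transformation-operator kernels $k(1,\cdot)$ and $k_1(1,\cdot)$; proves in Lemma~\ref{lem:two.kk1} that these kernels depend analytically and Lipschitz continuously on the spectra, using the entire-function results of \cite{HMzero} and an implicit function theorem whose partial derivatives \eqref{eq:5.partialf}--\eqref{eq:5.partialg} are uniformly invertible thanks again to the Riesz-basis bounds; then identifies the resulting sequences as cosine Fourier coefficients of $W_2^s$ functions via the convolution machinery of Appendices~\ref{sec:sobolev} and~\ref{sec:aux}; and finally converts $\tilde\bbe$ into $\bal$ by an inversion argument in the unital Banach algebra over $\ell_2^s$. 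None of this apparatus --- which is the bulk of Section~\ref{sec:two} --- appears even in outline in your sketch, and the direct product manipulation you propose would not replace it.

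Two smaller points. Recovering $\si$ ``from the diagonal values $K(x,x)$'' is not available at the endpoint $s=0$: there $K$ is merely Hilbert--Schmidt, its diagonal is undefined, and the paper therefore works with formula \eqref{eq:pre.sigma}, the relation $\si(x)=2k(x,x)-\phi(0)$ being valid only for smooth data. And at $s=1$ the ``comparatively routine'' extraction of $\si$ in fact requires proving $\partial_x k\in L_2(\Omega^+)$ with analytic and Lipschitz dependence on the data (the paper's Lemma~\ref{lem:si1cont}, established through finite-rank approximation of $F$), a step your sketch passes over; without it the Tartar interpolation has no $s=1$ endpoint to interpolate from.
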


Lipschitz continuity means that there exists a number~$L=L(s,h,r)$
such that for any two elements $\bnu_1$ and $\bnu_2$
of~$\sN^s(h,r)$ the regularized potentials~$\si_1$ and $\si_2$
in~$W_2^{s}(0,1)$ solving the inverse spectral problems for the
data~$\bnu_1$ and $\bnu_2$ satisfy
\[
    \|\si_1-\si_2\|_{W_2^{s}(0,1)}
        \le L \|\bnu_1-\bnu_2\|_{\sN^s}.
\]
See \cite{Di} for definitions and properties of analytic mappings
between Banach spaces.

In fact, we prove first analyticity and local Lipschitz continuity in the inverse
spectral problem of reconstructing $\si$ from the spectrum $(\la_n)$
of $T_{\mathrm D}(\si)$ and the norming constants $(\al_n)$ (see
Theorem~\ref{thm:pre.norm} below), and then derive
Theorem~\ref{thm:pre.main} by showing that the norming constants
depend analytically and locally Lipschitz continuously on the two spectra.

More exactly, we denote by $\sL^s$ the family of strictly increasing
sequences $\bla:=(\la_n)$ for which the sequence~$(\rho_n)$ with
$\rho_{2n-1}=0$ and $\rho_{2n}:=\sqrt{\la_n}-\pi n$ forms an element
of~$\hat\ell_{2}^s$ and introduce the topology on~$\sL^s$
by identifying such~$\bla$ with~$(\rho_n)\in\hat\ell_{2}^s$.
It follows from the results of Subsection~\ref{ssec:sp-data} that the subspace
$\hat\ell_{2,\mathrm{even}}^s$ of $\hat \ell_2^s$ consisting of elements with vanishing odd entries and the subspace
$W_{2,\mathrm{odd}}^s(0,1)$ of functions in~$W_2^s(0,1)$ which are odd
with respect to~$x=\tfrac12$ are isomorphic under the sine Fourier
transform~$\sF_{\mathrm{sin}}$. For $h\in(0,\pi)$ and $r>0$, we denote
by~$\sL^s(h,r)$ the closed convex subset of $\sL^s$ consisting of
sequences $(\la_n)$ with $\la_1\ge1$, $\sqrt{\la_{n+1}}-\sqrt{\la_n}\ge
h$, and such that $\|(\rho_n)\|_s\le r$.

Next, we write $\sA^s$ for the set of sequences $(\al_n)_{n\in\bN}$ of
positive numbers for which the sequence~$(\be_n)_{n\in\bN}$, with
$\be_{2n-1}=0$ and $\be_{2n}:=\al_n-2$, belongs
to~$\ell_{2,\mathrm{even}}^s$. This induces the topology of~$\ell_2^s$
on $\sA^s$; by the results of Subsection~\ref{ssec:sp-data} the space
$\ell_{2,\mathrm{even}}^s$ (and thus~$\sA^s$) is homeomorphic to the
subspace~$\widetilde W_{2,\mathrm{even}}^s(0,1)$ of functions in~$W_2^s(0,1)$ of zero mean
which are even with respect to~$x=\tfrac12$. We further consider closed
subsets $\sA^s(h,r)$ of $\sA^s$ consisting of all $(\al_n)$ satisfying the inequality~$\al_n \ge h$ for all~$n\in\bN$ and such that $\|(\beta_n)\|_s \le r$.

It follows from~\cite{HMscale} that to every $(\bla,\bal)\in\sL^s\times
\sA^s$ there corresponds a unique regularized real-valued potential
$\si\in W_2^s(0,1)$ such that $\bla$ and $\bal$ are the sequences of
eigenvalues and the corresponding norming constants of the operator~$T_{\mathrm{D}}(\sigma)$. The more elaborate
properties of this mapping are given by the following theorem.

\begin{theorem}\label{thm:pre.norm}
For every $s\in[0,1]$, $h\in(0,\pi)$, $h'\in(0,2)$, and any
positive $r$ and~$r'$, the inverse spectral mapping
\[
    \sL^s(h,r)\times\sA^s(h',r') \ni \bigl(\bla,\bal\bigr)
        \mapsto \si \in W_2^s(0,1)
\]
is analytic and Lipschitz continuous.
\end{theorem}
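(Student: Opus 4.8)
The plan is to realise the inverse mapping through the generalised Gelfand--Levitan--Marchenko (GLM) reconstruction of~\cite{HMinv}, to prove analyticity together with a uniform derivative bound at the two endpoints $s=0$ and $s=1$, and then to reach the intermediate values $s\in(0,1)$ by Tartar's nonlinear interpolation~\cite{Ta}. Recall that a solution of $l_\si(y)=z^2y$ subject to $y(0)=0$ and $y^{[1]}(0)=z$ admits a transformation-operator representation
\[
    y(x,z) = \frac{\sin zx}{z} + \int_0^x k(x,t)\,\frac{\sin zt}{z}\,dt,
\]
and that the regularised potential~$\si$ is recovered from the diagonal values $k(x,x)$ of the transformation kernel by the trace formula of~\cite{HMinv}. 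Accordingly the inverse mapping factorises as
\[
    (\bla,\bal)\ \longmapsto\ F\ \longmapsto\ k\ \longmapsto\ \si,
\]
so it suffices to show that each factor is analytic with Fr\'echet derivative bounded uniformly on $\sL^s(h,r)\times\sA^s(h',r')$; a uniform derivative bound on this convex set then gives Lipschitz continuity.

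First I would form from the data the symmetric GLM kernel
\[
    F(x,y) = \sum_{n=1}^\infty\Bigl(\al_n\,\frac{\sin\sqrt{\la_n}\,x}{\sqrt{\la_n}}\,\frac{\sin\sqrt{\la_n}\,y}{\sqrt{\la_n}}
        - 2\,\frac{\sin\pi nx}{\pi n}\,\frac{\sin\pi ny}{\pi n}\Bigr),
\]
the subtracted term being the free ($\si\equiv0$) contribution. Since $\rho_{2n}=\sqrt{\la_n}-\pi n$ and $\be_{2n}=\al_n-2$ lie in $\hat\ell_2^s$ and $\ell_{2,\mathrm{even}}^s$, the isomorphism properties of $\sF_{\mathrm{sin}}$ and $\sF_{\mathrm{cos}}$ from Subsection~\ref{ssec:sp-data} place $F$ in the appropriate Sobolev class of kernels on the triangle $\{0\le y\le x\le1\}$; moreover $F$ depends affinely on~$\bal$ and, through the entire function $z\mapsto\sin z$, analytically on~$\bla$, so that $(\bla,\bal)\mapsto F$ is analytic. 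The kernel $k_x:=k(x,\cdot)$ is then the solution of the GLM equation
\[
    k(x,y) + F(x,y) + \int_0^x k(x,t)\,F(t,y)\,dt = 0,
    \qquad 0\le y\le x,
\]
which in operator form reads $(I+\cF_x)\,k_x = -F(x,\cdot)$ on $L_2(0,x)$, where $\cF_x$ is the integral operator with kernel~$F$. As inversion is analytic on the open set of boundedly invertible operators and $F\mapsto\cF_x$ is bounded and linear, the map $F\mapsto k$ is analytic wherever $I+\cF_x$ is invertible; the last factor $k\mapsto\si$ is the linear trace map, for which one must check boundedness into $W_2^s(0,1)$.

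The heart of the matter, and the step I expect to be the main obstacle, is the \emph{uniform} bounded invertibility of $I+\cF_x$ on $L_2(0,x)$, uniformly in $x\in[0,1]$ and over the whole set $\sL^s(h,r)\times\sA^s(h',r')$. The operator $I+\cF_x$ is positive and self-adjoint, and its spectrum lies in an interval $[c,C]$ whose endpoints are governed by the lower and upper Riesz-basis bounds of the sine system $\{\sin\sqrt{\la_n}\,x\}_{n}$ on $L_2(0,1)$ and by the weights~$\al_n$. The separation $\sqrt{\la_{n+1}}-\sqrt{\la_n}\ge h$ makes these Riesz bounds uniform by~\cite{Hriesz}, while $\al_n\ge h'$ and $\|(\be_n)\|_s\le r'$ keep the weights under two-sided control; this yields constants $c=c(h,h')>0$ and $C=C(h,r,r')$ that are independent of~$x$ and of the particular data. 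The ensuing uniform bound on $\|(I+\cF_x)^{-1}\|$ gives a uniform bound on the Fr\'echet derivative of the composite map, from which Lipschitz continuity on the convex set follows. Proving this two-sided estimate in the Sobolev-scaled norms, and tracking the $W_2^s$-regularity of~$\si$ through the trace formula, is the technical core.

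Carrying out this programme in the endpoint spaces establishes analyticity and a uniform derivative bound for the inverse mapping at $s=0$ and $s=1$. To cover $s\in(0,1)$ I would then invoke Tartar's nonlinear interpolation theorem~\cite{Ta}: since the scales $\hat\ell_2^s$ and $W_2^s(0,1)$ arise as interpolation spaces between their $s=0$ and $s=1$ counterparts (Subsection~\ref{ssec:sp-data}) and the inverse mapping is defined coherently across the scale, the endpoint Lipschitz bounds interpolate to a Lipschitz bound at intermediate~$s$; analyticity at intermediate~$s$ follows by interpolating the uniform Cauchy estimates for the homogeneous Taylor terms of the endpoint maps. This would complete the proof.
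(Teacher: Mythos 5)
Your proposal follows the same architecture as the paper: the factorisation of the inverse map through the GLM equation, the uniform lower bound on $I+F$ coming from the uniform Riesz-basis bounds of $\{\sin\sqrt{\la_n}x\}$ in~\cite{Hriesz} together with $\al_n\ge h'$, and the scheme of proving the endpoints $s=0,1$ and interpolating by Tartar's theorem. However, two of your steps have genuine gaps. First, you recover $\si$ from ``the diagonal values $k(x,x)$ \ldots by the trace formula,'' calling $k\mapsto\si$ a linear trace map whose boundedness merely ``must be checked.'' At $s=0$ this step is not just unchecked, it fails as stated: the solution $K$ of the GLM equation is only a Hilbert--Schmidt operator, its kernel lies in $L_2(\Omega^+)$, and the diagonal of an $L_2$ kernel is not defined; the identity $\si(x)=2k(x,x)-\phi(0)$ holds only for smooth data. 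The paper avoids this by defining $\si$ through~\eqref{eq:pre.sigma}, $\si(x)=-\phi(2x)-2\int_0^x k(x,\xi)f(\xi,x)\,d\xi$, where the integral is a bounded bilinear expression in $k\in\fS_2$ and $\phi\in L_2(0,2)$ (the estimate in Lemma~\ref{lem:si0cont}); your argument needs this substitute formula, not the trace. Relatedly, your transformation-operator representation $y(x,z)=\sin zx/z+\int_0^x k(x,t)\sin zt/z\,dt$ is the classical one for integrable $q$; for the distributional class $W_2^{-1}$ that is the point of the theorem, the correct representation is~\eqref{eq:pre.y} with kernel acting on $\sin z(1-2t)$, and the regularity bookkeeping differs accordingly.

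Second, the endpoint $s=1$ --- which is where most of the paper's work lies --- is not carried out: you announce that ``tracking the $W_2^s$-regularity of $\si$ through the trace formula is the technical core'' and stop there. The paper's Lemma~\ref{lem:si1cont} does this by approximating the data by finite-rank spectral data, differentiating the GLM equation in $x$, and solving a second GLM-type equation $L=-(\cI+\cP^+_F)^{-1}G$ for $l=\partial_x k\in L_2(\Omega^+)$, after which the explicit formula for $\si_1'$ yields the $W_2^1$ dependence; nothing in your $L_2$-level scheme produces these derivative estimates, so the $s=1$ Lipschitz bound you feed into Tartar's theorem is unsupported. A smaller repairable point: your slicewise equation $(I+\cF_x)k_x=-F(x,\cdot)$ on $L_2(0,x)$ does inherit the lower bound $h'm$ by compression of $I+F\ge h'mI$, but you then need to assemble the slicewise solutions into a single kernel in $L_2(\Omega^+)$ depending analytically on $F$ (measurability and continuity in $x$ included); the paper's reformulation $K=-(\cI+\cP^+_F)^{-1}\cP^+F$ in $\fS_2^+$ with the triangular projection (Lemma~\ref{lem:P-prop}) delivers exactly this in one step, and I would recommend adopting it rather than the slicewise formulation.
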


We conclude this section by observing that analyticity and other
properties of the direct spectral mapping
\[
    W_2^s(0,1) \ni \si \mapsto (\bla,\bmu,\bal) \in \sN^s \times \sA^s,
\]
at least for the classical case $s=1$, are well known and studied in
detail in, e.g.,~\cite{PT}.

%
%

\section{Solution of the inverse spectral problem via the GLM
equation}\label{sec:GLM}

In this section, we recall shortly the method of reconstructing the regularized
potential~$\si$ based on the Gelfand--Levitan--Marchenko (GLM)
equation~\cite{HMinv}; see also~\cite{SSinv} for an alternative approach. Using this method, we shall later show analyticity and
Lipschitz continuity of the inverse spectral mapping.

We recall that $y(\cdot,z)$ stands for the solution of the
equation $l_\si(y) = z^2 y$ satisfying the initial
conditions~$y(0,z)=0$ and $y^{[1]}(0,z)=z$. This function has the
representation~\cite{HMtr}
\begin{equation}\label{eq:pre.y}
    y(x,z) = \sin z x + \int_0^x k(x,t) \sin z (1-2t)\,dt,
\end{equation}
where the kernel $k$ is lower-triangular (i.e., $k(x,t)=0$ a.e.\ in the
domain $0\le x\le t\le1$) and has the property that, for every fixed
$x\in[0,1]$, the functions $k(\cdot, x)$ and $k(x,\cdot)$ are
in~$L_2(0,1)$ and depend continuously therein on~$x\in[0,1]$. Also,
\begin{equation}\label{eq:pre.y'}
    y^{[1]}(x,z) = z\cos z x+ z \int_0^x k_1(x,t)\cos z (1-2t)\,dt
\end{equation}
where a kernel~$k_1$ has similar properties. Denoting by~$K$ the
integral operator with kernel~$k$ and by $I$ the identity operator
in~$L_2(0,1)$, we see that $I+K$ is the transformation operator mapping
solutions of the unperturbed ($\si=0$) differential equation~$l_0(y) =
z^2 y$ to those of~$l_\si(y)=z^2y$.

The GLM equation relates the spectral data for the
operator~$T_{\mathrm D}=T_{\mathrm D}(\si)$ (i.e., its eigenvalues
and norming constants) with the transformation operator $I+K$. To
derive it, we start with the resolution of identity for $T_{\mathrm
D}$,
\[
    I = \slim_{N\to\infty} \sum_{n=1}^N \al_n (\,\cdot\,, y_n) y_n,
\]
where $\slim$ stands for the limit in the strong operator topology, $(\cdot,\cdot)$ is the scalar product in~$L_2(0,1)$, and
$y_n:= y (\cdot,\sqrt{\la_n})$. Recalling that $y_n = (I+K)s_n$ with
$s_n(x) = \sin \sqrt{\la_n} x$, we get
\begin{equation}\label{eq:pre.I}
    I = (I+K)
    \Bigl[\slim_{N\to\infty} \sum_{n=1}^N \al_n (\,\cdot\,,s_n) s_n\Bigr]
        (I+K^*),
\end{equation}
The operator in the square brackets has the form $I+F$, where $F=F(\bla,\bal)$ is an integral operator of Hilbert--Schmidt class~$\fS_2$ with kernel
\begin{equation}\label{eq:pre.f}
        f(x,t):= \tfrac12\bigl(\phi(x+t) - \phi(|x-t|)\bigr),
\end{equation}
where
\begin{equation}\label{eq:pre.phi}
   \phi(x) := \sum_{n\in\bN}
        \bigl( 2\cos \pi nx - \al_k \cos\sqrt{\la_n}x\bigr)
\end{equation}
is a function in~$L_2(0,2)$. Applying $(I+K^*)^{-1}$ to both sides
of~\eqref{eq:pre.I} and rewriting the resulting relation in terms of
the kernels $k$ and $f$, we get the GLM equation
\begin{equation}\label{eq:pre.GLM}
    k(x,t) + f(x,t) + \int_0^x k(x,\xi) f(\xi,t)\,d\xi = 0,
    \qquad x\ge t.
\end{equation}

The algorithm of reconstructing $q$ from the spectral data
$\bigl((\la_n),(\mu_n)\bigr)$ proceeds now as follows. We first
calculate the numbers~$\al_n$ via~\eqref{eq:pre.al}, then construct
the function~$\phi$ of~\eqref{eq:pre.phi}, form the kernel $f$
of~\eqref{eq:pre.f}, solve the GLM equation~\eqref{eq:pre.GLM}
for~$k$, and set
\begin{equation}\label{eq:pre.sigma}
    \si(x) := -\phi(2x) - 2 \int_0^x k(x,\xi) f(\xi,x)\, d\xi.
\end{equation}
Then $\si\in L_2(0,1)$ is the unique regularized potential for which
the operators $T_{\mathrm D}(\si)$ and $T_{\mathrm N}(\si)$ have
eigenvalues $\la_n$ and $\mu_n$, $n\in\bN$, respectively, see~\cite{HMinv,HMtwo}. Moreover, if $k$ and $\phi$ are smooth, then the GLM equation
implies that \(
    \si(x) = 2 k(x,x) - \phi(0),
\) thus yielding the classical relation
\[
    q(x) = 2 \frac{d}{dx} k(x,x)
\]
for the potential $q$. It was proved in~\cite{HMscale} that if the
sequences $(\la_n)$ and $(\mu_n)$ are such that the corresponding
sequence $(\rho_n)$ is in $\hat\ell_2^s$, with $s\in(0,1]$, then
$\si$ of~\eqref{eq:pre.sigma} belongs to~$W_2^s(0,1)$, i.e., the
reconstructed potential $q$ is in $W^{s-1}_2(0,1)$.

%
%

\section{Reconstruction from a spectrum and norming
constants}\label{sec:norm}

In this section, we prove Theorem~\ref{thm:pre.norm} on analyticity and Lipschitz continuity in the inverse spectral problem of reconstructing the
regularized potentials of Sturm--Liouville differential expressions
from their Dirichlet spectra and the corresponding norming constants.

We shall study the correspondence between the data
$(\bla,\bal)\in\sL^s(h,r)\times \sA^s(h',r')$ and the regularized
potentials $\si\in W_2^{s}(0,1)$ of the Sturm--Liouville operator
$T_{\mathrm D}(\si)$  through the chain of mappings
\[
    (\bla,\bal) \mapsto \phi
    \mapsto F \mapsto K \mapsto \si,
\]
in which $\phi\in L_2(0,2)$ is the function of~\eqref{eq:pre.phi},
$F$ is the operator with kernel~$f$ of~\eqref{eq:pre.f}, $K\in\fS_2$
is the integral operator with kernel~$k$ that solves the GLM
equation~\eqref{eq:pre.GLM}, and, finally, $\si$ is given
by~\eqref{eq:pre.sigma}. Throughout this section, we fix $h\in(0,\pi)$, $h'\in(0,2)$, and positive $r$ and $r'$, and shall always denote by $\lambda_n$ the elements of a sequence $\bla\in\sL^s(h,r)$ and by $\al_n$ those
of an~$\bal\in\sA^s(h',r')$.



\begin{lemma}\label{lem:phicont}
The mapping
\[
    \sL^s(h,r)\times \sA^s(h',r') \ni (\bla,\bal)
            \mapsto \phi\in W^s_2(0,2)
\]
is analytic and Lipschitz continuous.
\end{lemma}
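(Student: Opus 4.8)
The plan is to exhibit $\phi$ as an explicit, absolutely convergent power series in the spectral perturbations and to estimate the $W_2^s(0,2)$-norm of each homogeneous term uniformly over the bounded set $\sL^s(h,r)\times\sA^s(h',r')$. Recalling from (N2) and \eqref{eq:2.sp} that $\sqrt{\la_n}=\pi n+\rho_{2n}$ and from Subsection~\ref{ssec:sp-data} that $\al_n=2+\be_{2n}$, I would first split the defining series \eqref{eq:pre.phi} as $\phi=g-h$, where
\[
  g(x):=2\sum_{n\in\bN}\bigl(\cos\pi nx-\cos\sqrt{\la_n}\,x\bigr),
  \qquad
  h(x):=\sum_{n\in\bN}\be_{2n}\cos\sqrt{\la_n}\,x .
\]
The point of the splitting is that $g$ depends on the data only through $\brho=(\rho_{2n})$ and vanishes when $\brho=0$, carrying all the diagonal cancellation, while $h$ is of first order in $\bbe=(\be_{2n})$ and also feels $\brho$ through the frequencies $\sqrt{\la_n}$. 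That $\phi\in L_2(0,2)$ is already recorded in the excerpt; the task is to upgrade this to $W_2^s$ and to control the dependence on the data.

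Next I would Taylor-expand each cosine in the shift $\rho_{2n}$,
\[
  \cos\sqrt{\la_n}\,x=\cos(\pi n+\rho_{2n})x
   =\sum_{j\ge0}\frac{x^{j}}{j!}\,\cos\!\bigl(\pi nx+\tfrac{\pi j}{2}\bigr)\,\rho_{2n}^{\,j},
\]
and regroup the two series by total homogeneity degree in the variables $(\brho,\bbe)$. This presents $g=-2\sum_{j\ge1}\tfrac1{j!}\,x^{j}G_j$ and $h=\sum_{j\ge0}\tfrac1{j!}\,x^{j}H_j$, with
\[
  G_j(x):=\sum_{n}\cos\!\bigl(\pi nx+\tfrac{\pi j}{2}\bigr)\rho_{2n}^{\,j},
  \qquad
  H_j(x):=\sum_{n}\be_{2n}\,\rho_{2n}^{\,j}\,\cos\!\bigl(\pi nx+\tfrac{\pi j}{2}\bigr).
\]
Each $G_j$ is a bounded homogeneous polynomial of degree $j$ in $\brho$, and each $H_j$ one of degree $j+1$ in $(\brho,\bbe)$, with values in $W_2^s(0,2)$. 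Once the series are shown to converge in $W_2^s(0,2)$, uniformly on the data set and on a complex neighbourhood of it, the map $(\bla,\bal)\mapsto\phi$ is locally bounded and holomorphic along every complex direction, hence analytic by the power-series criterion for maps between Banach spaces~\cite{Di}.

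The convergence rests on three norm estimates. First, since $\cos(\pi nx+\tfrac{\pi j}2)$ equals $\pm\cos\pi nx$ or $\mp\sin\pi nx$, and the systems $\{\cos\pi nx\}_n$ and $\{\sin\pi nx\}_n$ are, after the reindexing $m=2n$, parts of the standard cosine and sine bases $\{\cos\tfrac{\pi mx}{2}\}$, $\{\sin\tfrac{\pi mx}{2}\}$ on $(0,2)$, the Fourier--Sobolev isomorphism (Subsection~\ref{ssec:sp-data} and Appendix~\ref{sec:sobolev}) gives, for all $s\in[0,1]$, the \emph{data-independent} bounds $\|G_j\|_{W_2^s(0,2)}\le C\|(\rho_{2n}^{\,j})_n\|_{\ell_2^s}$ and $\|H_j\|_{W_2^s(0,2)}\le C\|(\be_{2n}\rho_{2n}^{\,j})_n\|_{\ell_2^s}$. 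Second, since $\ell_2^s\hookrightarrow\ell_\infty$ with $M:=\sup_n|\rho_{2n}|\le\|\brho\|_s\le r$ on the $r$-ball, the elementary multiplier estimate yields $\|(\rho_{2n}^{\,j})\|_{\ell_2^s}\le M^{\,j-1}\|\brho\|_s$ and $\|(\be_{2n}\rho_{2n}^{\,j})\|_{\ell_2^s}\le M^{\,j}\|\bbe\|_s$. Third, multiplication by the smooth weight $x^{j}$ is bounded on $W_2^s(0,2)$ for $s\in[0,1]$, with norm controlled by $\|x^{j}\|_{C^1[0,2]}\le Cj\,2^{j}$. Combining the three gives $\|x^{j}G_j\|_{W_2^s}\le C'j\,2^{j}M^{\,j-1}r$ and $\|x^{j}H_j\|_{W_2^s}\le C'j\,2^{j}M^{\,j}r'$, so the factorial weights $1/j!$ make both series converge absolutely in $W_2^s(0,2)$, uniformly over $\sL^s(h,r)\times\sA^s(h',r')$. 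This establishes $\phi\in W_2^s(0,2)$ together with its analytic dependence on the data.

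Lipschitz continuity then follows because the set $\sL^s(h,r)\times\sA^s(h',r')$ is convex: the termwise-differentiated series shows that the Fréchet derivative of $(\bla,\bal)\mapsto\phi$ is bounded in operator norm by a constant $L=L(s,h,r,h',r')$ on the whole set, whence the mean value inequality gives $\|\phi_1-\phi_2\|_{W_2^s(0,2)}\le L\,\|(\bla_1,\bal_1)-(\bla_2,\bal_2)\|$. I expect the main technical point to be the uniform, all-$s$ control at the third step combined with the reorganisation of the double series: one must track the $j$-dependence of the multiplier norm of $x^{j}$ so that the $1/j!$ factors dominate, and justify the rearrangement by the absolute convergence just obtained. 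I note that, should one prefer to avoid the Taylor expansion, an alternative is to bound $g$ and $h$ directly by invoking the uniform two-sided Riesz-basis bounds for $\{\cos\sqrt{\la_n}x\}$ on the $W_2^s$-scale from~\cite{Hriesz} and Appendix~\ref{sec:RB}; in that route the $h$-separation of the data, rather than merely the $\ell_\infty$-bound on $\brho$, is what secures uniformity of the constants.
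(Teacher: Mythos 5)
Your proposal is correct and follows essentially the same route as the paper: your splitting $\phi=g-h$ is exactly the paper's decomposition $\phi(2x)=\varphi_{\bla}(x)+\psi_{\bla,\bal}(x)$, and your Taylor expansion in $\rho_{2n}$ with factorial-decay absolute convergence reproduces, on the Fourier-coefficient side, precisely the content of Lemma~\ref{lem:C.Phi}, Lemma~\ref{lem:C.sum2}, and Corollary~\ref{cor:C.Phi} to which the paper's two-line proof reduces (there the $j$-th homogeneous term is written as $\tfrac{(\ri x)^j}{j!}\,g\ast f^{<j>}$ and bounded via the convolution estimate of Proposition~\ref{pro:B.conv}, which is the function-side twin of your $\ell_\infty$-multiplier bound). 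The only imprecision is that for $s\ge\tfrac12$ the sequence $\brho$ lies in $\hat\ell_2^s$ rather than $\ell_2^s$, so at $j=1$ you must invoke the isomorphism between $W_2^s(0,1)$ and $\hat\ell_2^s$ directly, and for $j\ge2$ your inequality $\|(\rho_{2n}^{\,j})\|_{\ell_2^s}\le M^{j-1}\|\brho\|_s$ acquires a harmless constant (decompose $\brho=\bx+a_0\bfe_0+a_1\bfe_1$ and use $(\rho_{2n})\in\ell_2\cap\ell_\infty$) --- a two-line repair, geometric in $j$, that the factors $1/j!$ absorb without affecting the argument.
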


\begin{proof}
We have
 \(
     \phi(2x) = \varphi_{\bla}(x) + \psi_{\bla,\bal}(x),
 \)
where
\[
    \varphi_{\bla}(x)
        := 2\sum_{n=1}^\infty[\cos (2\pi nx) -\cos(2\sqrt{\la_n}x)]
\]
and
\[
    \psi_{\bla,\bal}(x):= -\sum_{n=1}^\infty \beta_n \cos(2\sqrt{\la_n}x),
\]
with $\be_n:=\al_n-2$. We recall that $\sqrt\la_n = \pi n + s_{2n}(f)$
for a unique $f\in W_{2,\mathrm{odd}}^s(0,1)$ and $\be_n = c_{2n}(g)$
for a unique $g\in \widetilde W_{2,\mathrm{even}}^s(0,1)$ and that
the mapping $(\bla,\bal) \mapsto (f,g)$ is isomorphic from $\sL^s\times
\sA^s$ into $W_{2,\mathrm{odd}}^s(0,1)\times
\widetilde W_{2,\mathrm{even}}^s(0,1)$. Therefore analyticity and Lipschitz
continuity of the mapping under consideration follows from
Corollary~\ref{cor:C.Phi}.
\end{proof}

It is advantageous to regard the GLM equation~\eqref{eq:pre.GLM} as
the relation between the integral operators $K$ and $F$ generated by
the kernels $k$ and $f$. We shall need several related notions,
which we now recall.

The ideal $\fS_2$ of Hilbert--Schmidt operators consists of integral
operators with square summable kernels, and the scalar product $\langle
X,Y\rangle_2 := \tr(X Y^*)$ introduces a Hilbert space structure on
$\fS_2$. For an integral operator~$T\in\fS_2$ with kernel~$t$ we find that
    $\langle T,T\rangle_2 = \int_0^1\int_0^1 |t(x,y)|^2\,dx\,dy$;
thus the estimate
\begin{equation}\label{eq:FS2}
\begin{aligned}
  \int_0^1 |f(x,y)|^2\,dx
    &\le \tfrac12\int_0^1 |\phi(x+y)|^2\,dx
      +  \tfrac12\int_0^1 |\phi(|x-y|)|^2\,dx \\
     &\le \int_0^{2} |\phi(\xi)|^2\,d\xi
\end{aligned}
\end{equation}
shows that $F\in \fS_2$ and
$\|F\|^2_{\fS_2}=\langle F,F\rangle_2\le \|\phi\|^2_{L_2(0,2)}$.
Moreover, the mapping $\phi\mapsto F$ is linear
(and thus analytic and Lipschitz continuous) from $L_2(0,2)$ into $\fS_2$.

Further, we denote by $\fS_2^+$ the subspace of $\fS_2$ consisting
of all Hilbert--Schmidt operators with lower-triangular kernels. In
other words, $T \in \fS_2$ belongs to $\fS_2^+$ if the kernel $t$
of~$T$ satisfies $t(x,y) = 0$ a.e.\ outside the domain
$\Omega^+:=\{(x,y)\mid 0<y<x<1\}$. For an arbitrary $T\in\fS_2$ with
kernel $t$ the cut-off $t^+$ of $t$ given by
\[
    t^+(x,y) = \left\{ \begin{array}{ll}
        t(x,y)& \quad \mbox {for $x \ge y$},\\
              0 & \quad \mbox {for $x < y$}, \end{array}
        \right.
\]
generates an operator $T^+\in\fS_2^+$, and the corresponding mapping
$\cP^+: T \mapsto T^+$  turns out to be an orthoprojector in $\fS_2$
onto $\fS_2^+$, i.e. $(\cP^+)^2 = \cP^+$ and
$\langle \cP^+ X, Y\rangle_2 = \langle X,\cP^+ Y\rangle_2$ for any $X,Y\in \fS_2$; see details in~\cite[Ch.~I.10]{GK2}.

With these notations, the GLM equation~\eqref{eq:pre.GLM} can be recast
as
\begin{equation}\label{eq:KGLM}
    K + \cP^+ F + \cP^+(K F) = 0
\end{equation}
or
\[
    (\cI + \cP^+_F) K = - \cP^+ F,
\]
where $\cP^+_X$ is the linear operator in $\fS_2$ defined by $\cP^+_X Y
= \cP^+ (YX)$ and $\cI$ is the identity operator in $\fS_2$. Therefore
solvability of the GLM equation is strongly connected with the
properties of the operator~$\cP^+_F$.


\begin{lemma}\label{lem:P-prop}
For every~$X\in \sB(L_2(0,1))$, the operator~$\cP^+_X$ is bounded
in~$\fS_2$. Moreover, for every $F$ from the set
\begin{equation}\label{eq:cF}
    \cF:=\{F(\bla,\bal) \mid (\bla,\bal) \in
                        \sL^s(h,r)\times \sA^s(h',r')\}
\end{equation}
the operator $\cI + \cP^+_F$ is invertible in~$\sB(\fS_2^+)$ and the inverse
$(\cI + \cP^+_F)^{-1}$ depends analytically and Lipschitz continuously
in~$\sB(\fS_2^+)$ on $F\in \cF$ in the topology of~$\fS_2$.
\end{lemma}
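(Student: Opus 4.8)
The first assertion I would obtain immediately from the ideal property of $\fS_2$. For $Y\in\fS_2$ and $X\in\sB(L_2(0,1))$ the product $YX$ again lies in $\fS_2$ with $\|YX\|_{\fS_2}\le\|Y\|_{\fS_2}\|X\|_{\sB(L_2)}$, and since $\cP^+$ is an orthoprojector and thus of norm at most~$1$, we get $\|\cP^+_XY\|_{\fS_2}=\|\cP^+(YX)\|_{\fS_2}\le\|X\|_{\sB(L_2)}\|Y\|_{\fS_2}$. Hence $\cP^+_X$ is bounded in $\fS_2$ with $\|\cP^+_X\|_{\sB(\fS_2)}\le\|X\|_{\sB(L_2)}$, and since the range of $\cP^+$ is $\fS_2^+$ it maps $\fS_2^+$ into itself.

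For the second assertion my plan is to derive a coercivity estimate for $\cI+\cP^+_F$ on $\fS_2^+$ and then invoke the (non-symmetric) Lax--Milgram lemma. The starting point is a quadratic-form identity: for $K\in\fS_2^+$, using that $\cP^+$ is a self-adjoint projection with $\cP^+K=K$ and that the symmetry of the kernel~$f$ gives $F=F^*$, one computes
\[
  \langle(\cI+\cP^+_F)K,K\rangle_2
    =\langle K,K\rangle_2+\langle KF,K\rangle_2
    =\tr\bigl(K(I+F)K^*\bigr),
\]
a quantity which is real and nonnegative. Everything therefore reduces to the \emph{uniform positivity} of $I+F$ on the set~$\cF$: once $I+F\ge\delta I$ with some $\delta=\delta(h,h',r,r')>0$, monotonicity of the trace gives $\tr(K(I+F)K^*)\ge\delta\|K\|_{\fS_2}^2$, and the coercivity estimate $\langle(\cI+\cP^+_F)K,K\rangle_2\ge\delta\|K\|_{\fS_2}^2$ yields both injectivity and surjectivity of $\cI+\cP^+_F$ in $\sB(\fS_2^+)$ together with the uniform bound $\|(\cI+\cP^+_F)^{-1}\|_{\sB(\fS_2^+)}\le1/\delta$.

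To prove the uniform positivity I recall that, by construction in~\eqref{eq:pre.I}, $I+F=\slim_{N}\sum_{n=1}^N\al_n(\,\cdot\,,s_n)s_n$ with $s_n(x)=\sin\sqrt{\la_n}\,x$, so that $\langle(I+F)h,h\rangle=\sum_n\al_n|(h,s_n)|^2$ for $h\in L_2(0,1)$. Since $\bla\in\sL^s(h,r)$ forces the separation $\sqrt{\la_{n+1}}-\sqrt{\la_n}\ge h$ together with $\sqrt{\la_n}=\pi n+\rho_{2n}$, $(\rho_n)\in\ell_2$, the system $(s_n)$ is a Riesz basis of $L_2(0,1)$ whose lower frame bound $\tilde A>0$ can be chosen uniformly over $\sL^s(h,r)$; this is precisely the uniform Riesz-basis property of Appendix~\ref{sec:RB} (see also~\cite{Hriesz}). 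Combining $\sum_n|(h,s_n)|^2\ge\tilde A\|h\|^2$ with $\al_n\ge h'$ then gives $\langle(I+F)h,h\rangle\ge h'\tilde A\|h\|^2$, i.e.\ $I+F\ge\delta I$ with $\delta:=h'\tilde A$, uniformly for $F\in\cF$. I expect this uniform positivity to be the crux of the argument, with all remaining steps soft.

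Finally, for the analytic and Lipschitz dependence I observe that $F\mapsto\cP^+_F$ is linear and, by the first assertion together with $\|F\|_{\sB(L_2)}\le\|F\|_{\fS_2}$, bounded from $\fS_2$ into $\sB(\fS_2^+)$; in particular it is entire and Lipschitz in the $\fS_2$-topology. Composing it with the operator-inversion map $A\mapsto A^{-1}$, which is analytic on the open set of invertible elements of $\sB(\fS_2^+)$ (see~\cite{Di}), shows that $F\mapsto(\cI+\cP^+_F)^{-1}$ is analytic on a neighbourhood of~$\cF$. For Lipschitz continuity I would use the resolvent identity $A^{-1}-B^{-1}=A^{-1}(B-A)B^{-1}$ with $A=\cI+\cP^+_{F_1}$ and $B=\cI+\cP^+_{F_2}$: together with the uniform bound $\|(\cI+\cP^+_F)^{-1}\|\le1/\delta$ and $\|\cP^+_{F_1}-\cP^+_{F_2}\|\le\|F_1-F_2\|_{\fS_2}$ this yields $\|(\cI+\cP^+_{F_1})^{-1}-(\cI+\cP^+_{F_2})^{-1}\|\le\delta^{-2}\|F_1-F_2\|_{\fS_2}$, as required.
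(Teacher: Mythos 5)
Your proof is correct and follows essentially the same route as the paper's: the ideal property of $\fS_2$ for boundedness, the trace identity $\langle(\cI+\cP^+_F)K,K\rangle_2=\tr\bigl(K(I+F)K^*\bigr)$, uniform positivity $I+F\ge h'm\,I$ from $\al_n\ge h'$ and the uniform lower Riesz bound of Theorem~\ref{thm:RB.unif}, and linearity of $F\mapsto\cP^+_F$ composed with operator inversion for analyticity and Lipschitz continuity. The only cosmetic difference is that you invoke Lax--Milgram where the paper uses self-adjointness of $\cP^+_F$ on $\fS_2^+$ (via $F=F^*$) to pass directly from $\cI+\cP^+_F\ge h'm\,\cI$ to bounded invertibility; both are valid, and your explicit resolvent-identity estimate just fills in a step the paper leaves implicit.
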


\begin{proof}
Boundedness of $\cP^+_X$ in $\fS_2$ is a straightforward consequence of
the inequality
\[
    \|\cP^+_X Y\|_{\fS_2} \le \|YX\|_{\fS_2} \le \|X\|_{\sB(L_2(0,1))} \|Y\|_{\fS_2},
\]
see~\cite[Ch.~3]{GK1}. Assume next that $I+X\ge\eps I$ in $L_2(0,1)$;
then, for any $Y\in\fS_2^+$,
\[
    \langle (\cI  + \cP^+_X) Y, Y\rangle_2
        = \langle Y,Y\rangle_2 + \langle YX,Y \rangle_2
        = \tr \bigl(Y(I+X)Y^*\bigr).
\]
We see that $Y(I+X)Y^* \ge \eps YY^*$ and by monotonicity of the trace
we get
\[
    \langle (\cI+\cP^+_X) Y,Y \rangle_2 \ge \eps \langle Y, Y
    \rangle_2,
\]
i.e., $\cI + \cP^+_X\ge\eps\cI$ in $\fS_2^+$.

Now, if $F=F(\bla,\bal)$ is constructed as explained in
Section~\ref{sec:GLM} from $\bla=(\la_k)_{k\in\bN} \in\sL^s(h,r)$
and $\bal=(\al_k)_{k\in\bN}\in\sA^s(h',r')$, then
\[
    I + F = \slim_{N\to\infty}
        \sum_{k=1}^N \al_k (\,\cdot\, , s_k)
        s_k
\]
with $s_k(x):=\sin\sqrt{\la_k}x$. By definition, $\al_k\ge h'$ for
all $k\in\bN$; moreover, by Theorem~\ref{thm:RB.unif} the sequence
$(s_k)_{k\in\bN}$ forms a Riesz basis of the space $L_2(0,1)$ and
its lower bound~$m>0$ can be chosen the same for all~$\bla\in
\sL^s(h,r)$. Therefore
\[
    \bigl((I+F)y,y\bigr)
        = \sum_{k=1}^\infty
        \al_k |(y,s_k)|^2
            \ge h'm\|y\|^2
\]
for every $y\in L_2(0,1)$, so that $I+F\ge h'm I$.

By the above, $\cI+\cP^+_F\ge h'm\,\cI$ in $\fS_2^+$; thus $\cI +
\cP^+_F$ is boundedly invertible in~$\sB(\fS_2^+)$ and
\[
    \|(\cI + \cP^+_F)^{-1}\| \le (h'm)^{-1}.
\]
Since $\cP^+_X$ depends linearly on~$X$, it follows that the mapping
\[
    F \mapsto (\cI + \cP^+_F)^{-1}
\]
from $\fS_2$ into~$\sB(\fS_2^+)$ is analytic and Lipschitz continuous
on the set $\cF$. The proof is complete.
\end{proof}

\begin{corollary}\label{cor:GLM}
For every $F\in\cF$, the GLM equation~\eqref{eq:KGLM} has a unique
solution
\[
    K := - (\cI + \cP^+_F)^{-1}\cP^+F \in \fS_2^+;
\]
moreover, the mapping $F\mapsto K$ from $\cF\subset\fS_2$ to $\fS_2^+$
is analytic and Lipschitz continuous.
\end{corollary}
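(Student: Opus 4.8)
The plan is to read off both the solution formula and its regularity directly from the reformulation~\eqref{eq:KGLM} of the GLM equation together with Lemma~\ref{lem:P-prop}. First I would record that $\cP^+$ is the orthoprojector of $\fS_2$ onto $\fS_2^+$, so that $\cP^+ F\in\fS_2^+$, and that for any $Y\in\fS_2^+$ the element $\cP^+_F Y=\cP^+(YF)$ again lies in $\fS_2^+$. Hence $\cI+\cP^+_F$ is a well-defined operator in $\sB(\fS_2^+)$, and the GLM equation~\eqref{eq:KGLM} is equivalent to the single relation $(\cI+\cP^+_F)K=-\cP^+ F$ within $\fS_2^+$.

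Next, for $F\in\cF$, Lemma~\ref{lem:P-prop} guarantees that $\cI+\cP^+_F$ is boundedly invertible in $\sB(\fS_2^+)$. Applying the inverse to both sides, I obtain that the GLM equation has the unique solution $K=-(\cI+\cP^+_F)^{-1}\cP^+ F\in\fS_2^+$. This simultaneously yields existence, uniqueness, and the stated closed form for $K$.

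It then remains to prove that $F\mapsto K$ is analytic and Lipschitz continuous on $\cF$. I would present this map as the composition of $F\mapsto\bigl((\cI+\cP^+_F)^{-1},-\cP^+ F\bigr)$ with the evaluation pairing $(\mathcal A,Y)\mapsto\mathcal A Y$. The first component $F\mapsto(\cI+\cP^+_F)^{-1}$ is analytic and Lipschitz continuous on $\cF$ by Lemma~\ref{lem:P-prop}, while $F\mapsto-\cP^+ F$ is linear and bounded, hence analytic and Lipschitz continuous. The evaluation pairing is a bounded bilinear map from $\sB(\fS_2^+)\times\fS_2^+$ into $\fS_2^+$ and is therefore analytic. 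Since both factors remain in bounded sets as $F$ ranges over the bounded set $\cF$---indeed $\|(\cI+\cP^+_F)^{-1}\|\le(h'm)^{-1}$ by Lemma~\ref{lem:P-prop} and $\|\cP^+ F\|_{\fS_2}\le\|F\|_{\fS_2}$ are uniformly bounded---the composition is analytic, and the standard product estimate $\|\mathcal A_1 Y_1-\mathcal A_2 Y_2\|\le\|\mathcal A_1\|\,\|Y_1-Y_2\|+\|\mathcal A_1-\mathcal A_2\|\,\|Y_2\|$ shows it is Lipschitz continuous on $\cF$.

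There is essentially no hard step here: all of the substance is already contained in Lemma~\ref{lem:P-prop}, which supplies both the uniform invertibility and the analytic Lipschitz dependence of the inverse. The only point deserving care is the bookkeeping that the product of two analytic, bounded, Lipschitz maps stays Lipschitz, which works precisely because $\cF$ is bounded and the inverse $(\cI+\cP^+_F)^{-1}$ is uniformly bounded on $\cF$.
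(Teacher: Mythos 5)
Your proposal is correct and is precisely the argument the paper intends: the corollary is stated without a separate proof because it follows immediately from Lemma~\ref{lem:P-prop} via the rewriting $(\cI+\cP^+_F)K=-\cP^+F$ in $\fS_2^+$, exactly as you do. Your added bookkeeping---the bounded bilinear evaluation pairing together with the uniform bound $\|(\cI+\cP^+_F)^{-1}\|\le (h'm)^{-1}$ and the boundedness of $\cF$ in $\fS_2$---is the right way to make the analyticity and Lipschitz claims explicit.
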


In view of the above results and formula~\eqref{eq:pre.sigma},
the regularized potential $\si$ is determined uniquely by the function
$\phi$ of~\eqref{eq:pre.phi}. To complete the proof of
Theorem~\ref{thm:pre.norm}, we shall show that the induced mapping
$\phi \mapsto \si$ is analytic and locally Lipschitz continuous from
the space $W_2^s(0,2)$ into $W_2^s(0,1)$ for every $s\in[0,1]$. We
shall establish this for $s=0$ and~$s=1$, and then interpolate to cover
all the intermediate values $s\in(0,1)$.


\subsection{The case $s=0$.}\label{ssec:s=0}
This case is the easiest to treat, although it corresponds to
the set of Sturm--Liouville operators with the most singular
potentials---namely, with distributional potentials
in~$W_2^{-1}(0,1)$.

\begin{lemma}\label{lem:si0cont}
The function $\si$ of~\eqref{eq:pre.sigma} depends analytically and
Lipschitz continuously in~$L_2(0,1)$ on the function~$\phi\in
L_2(0,2)$ of~\eqref{eq:pre.phi} that is in the range of the mapping
of Lemma~\ref{lem:phicont} for $s=0$.
\end{lemma}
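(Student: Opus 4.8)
The plan is to analyze the map $\phi\mapsto\sigma$ defined by~\eqref{eq:pre.sigma} by splitting it into its two summands. The first summand $\phi\mapsto-\phi(2\,\cdot\,)$ is a bounded linear operator from $L_2(0,2)$ into $L_2(0,1)$, since $\int_0^1|\phi(2x)|^2\,dx=\tfrac12\|\phi\|_{L_2(0,2)}^2$; a bounded linear map is automatically analytic and Lipschitz continuous, so this term causes no trouble. The substance of the lemma lies in the second summand
\[
    g(x):=\int_0^x k(x,\xi)\,f(\xi,x)\,d\xi,
\]
where $k$ is the kernel of the operator $K=K(\phi)$ produced by Corollary~\ref{cor:GLM} and $f$ is the kernel of $F=F(\phi)$.

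I would first record that $g$ is the value of a bounded bilinear map in the pair $(\phi,K)$. Indeed, $f$ depends linearly on $\phi$ through~\eqref{eq:pre.f}, so $g$ is separately linear in $\phi$ and in $K$. For the boundedness, the Cauchy--Schwarz inequality applied in the inner integral gives
\[
    |g(x)|^2\le\Bigl(\int_0^1|k(x,\xi)|^2\,d\xi\Bigr)
        \Bigl(\int_0^1|f(\xi,x)|^2\,d\xi\Bigr).
\]
The crucial point is that the columns of $f$ are uniformly square-integrable: by the estimate already carried out in~\eqref{eq:FS2} one has $\int_0^1|f(\xi,x)|^2\,d\xi\le\|\phi\|_{L_2(0,2)}^2$ for every $x\in[0,1]$. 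Integrating the previous inequality over $x$ and using $\int_0^1\int_0^1|k(x,\xi)|^2\,d\xi\,dx=\|K\|_{\fS_2}^2$ then yields
\[
    \|g\|_{L_2(0,1)}\le\|\phi\|_{L_2(0,2)}\,\|K\|_{\fS_2},
\]
so that $(\phi,K)\mapsto g$ is a bounded bilinear map from $L_2(0,2)\times\fS_2^+$ into $L_2(0,1)$, hence analytic.

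To finish I would compose this with the maps already shown analytic and Lipschitz. The map $\phi\mapsto F$ is linear and bounded (as noted before~\eqref{eq:FS2}), and $F\mapsto K$ is analytic and Lipschitz on $\cF$ by Corollary~\ref{cor:GLM}; hence $\phi\mapsto(\phi,K(\phi))$ is analytic, and composing it with the bounded bilinear map gives analyticity of $\phi\mapsto g$, and thus of $\phi\mapsto\sigma$. For Lipschitz continuity I would exploit bilinearity: for $\phi_1,\phi_2$ in the (bounded) range of the map of Lemma~\ref{lem:phicont} and the associated $K_1,K_2$,
\[
    g(\phi_1,K_1)-g(\phi_2,K_2)
        =g(\phi_1-\phi_2,K_1)+g(\phi_2,K_1-K_2),
\]
and the bilinear bound together with the uniform bounds on $\|\phi_i\|_{L_2(0,2)}$ and $\|K_i\|_{\fS_2}$ and the Lipschitz estimate $\|K_1-K_2\|_{\fS_2}\le C\|\phi_1-\phi_2\|_{L_2(0,2)}$ (from Corollary~\ref{cor:GLM} and linearity of $\phi\mapsto F$) produce the desired Lipschitz bound for $\sigma$. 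The main obstacle to keep in view is the well-definedness of the diagonal-type expression $g(x)$ for merely square-integrable kernels; this is exactly what the uniform column estimate for $f$, inherited from the translation structure $f(\xi,x)=\tfrac12\bigl(\phi(x+\xi)-\phi(|x-\xi|)\bigr)$ that keeps all arguments of $\phi$ inside $(0,2)$, is designed to overcome.
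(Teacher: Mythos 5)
Your proposal is correct and follows essentially the same route as the paper's own proof: the identical Cauchy--Schwarz estimate based on the uniform column bound from~\eqref{eq:FS2}, yielding $\|\int_0^x k(x,t)f(t,x)\,dt\|_{L_2(0,1)}\le\|\phi\|_{L_2(0,2)}\|K\|_{\fS_2}$, composed with the linearity of $\phi\mapsto F$ and the analyticity and Lipschitz continuity of $F\mapsto K$ from Corollary~\ref{cor:GLM}. Your explicit bilinear splitting $g(\phi_1,K_1)-g(\phi_2,K_2)=g(\phi_1-\phi_2,K_1)+g(\phi_2,K_1-K_2)$ merely spells out what the paper compresses into its final sentence, so there is no substantive difference.
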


\begin{proof}
By definition,
 \(
    \si(x) = -\phi(2x) - 2 \int_0^x k(x,t) f(t,x)\, dt,
 \)
where $k$ is the kernel of the solution~$K$ of the GLM
equation~\eqref{eq:pre.GLM} and
$f(t,x)=\tfrac12[\phi(t+x)-\phi(|x-t|)]$. Thus the integral above
depends linearly on~$k$ and~$\phi$; moreover, in view
of~\eqref{eq:FS2} we get
\begin{align*}
    \int_0^1 dx &\Bigl| \int_0^x k(x,t)f(t,x)\,dt\Bigr|^2\\
        &\le \int_0^1 dx
            \int_0^x |k(x,t)|^2\,dt \int_0^x |f(t,x)|^2\,dt\\
        &\le \|\phi\|^2_{L_2(0,2)} \int_0^1\int_0^1 |k(x,t)|^2dxdt
        =  \|\phi\|^2_{L_2(0,2)} \|K\|^2_{\fS_2}.
\end{align*}
Since $K\in\fS_2^+$ depends analytically and Lipschitz continuously
on $F\in\cF$ and $F\in \fS_2$ depends linearly and continuously on
$\phi=\phi(\bla,\bal)$, the result follows.
\end{proof}

This completes the proof of Theorem~\ref{thm:pre.norm} for the case
$s=0$.


\subsection{The case $s=1$}\label{ssec:s=1}
The function $\phi$ of~\eqref{eq:pre.phi} belongs in this case
to~$W_2^1(0,2)$, and the solution $k$ to the GLM
equation~\eqref{eq:pre.GLM} must also possess some extra smoothness. We
recall that functions in $W_2^1(0,1)$ are continuous and that there
is $C>0$ such that
\[
    \max_{x\in[0,1]}|g(x)| \le C \|g\|_{W_2^1(0,1)}
\]
for every $g\in W_2^1(0,1)$. As above, we denote by $\Omega^+$ the
set~$\{(x,t)\mid 0<t<x<1\}$.

\begin{lemma}\label{lem:si1cont}
Let $s=1$ and $\phi$ and $k$ be defined as above. Then the
distributional derivative $\partial_xk$ of~$k$ belongs to
$L_2(\Omega^+)$ and the induced mapping $(\bla,\bal)\mapsto
\partial_xk$ is analytic and Lipschitz continuous from $\sL^1(h,r)\times
\sA^1(h',r')$ into $L_2(\Omega^+)$.
\end{lemma}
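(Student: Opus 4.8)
The plan is to differentiate the GLM equation~\eqref{eq:pre.GLM} in the variable~$x$ and to show that the resulting kernel $\partial_x k$ solves a GLM-type equation of the same form~\eqref{eq:KGLM}, to which the uniform invertibility of Lemma~\ref{lem:P-prop} applies. Since $s=1$, Lemma~\ref{lem:phicont} gives $\phi\in W_2^1(0,2)$, and the embedding $W_2^1(0,2)\hookrightarrow C[0,2]$ shows that $\phi$ is bounded and continuous while $\phi'\in L_2(0,2)$. Consequently $f$ is continuous on $\overline{\Omega^+}$ and $\partial_x f(x,t)=\tfrac12\bigl(\phi'(x+t)-\phi'(x-t)\bigr)$ belongs to $L_2(\Omega^+)$, with $\|\partial_x f\|_{L_2(\Omega^+)}\le C\|\phi'\|_{L_2(0,2)}$ by the same computation as in~\eqref{eq:FS2}; moreover this kernel depends linearly, hence analytically and Lipschitz continuously, on~$\phi$.

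Differentiating~\eqref{eq:pre.GLM} formally in~$x$ yields, with $\ell:=\partial_x k$ and $\kappa(x):=k(x,x)$,
\[
  \ell(x,t)+\partial_x f(x,t)+\kappa(x)f(x,t)+\int_0^x \ell(x,\xi)f(\xi,t)\,d\xi=0,
\]
i.e.\ $(\cI+\cP^+_F)L=G$, where $L$ is the operator with kernel~$\ell$ and $G$ has kernel $g(x,t)=-\partial_x f(x,t)-\kappa(x)f(x,t)$. The main obstacle is controlling the diagonal term $\kappa(x)f(x,t)$. Here I would use that, for each fixed~$x$, the function $k(x,\cdot)$ is continuous on $[0,x]$ (the right-hand side of~\eqref{eq:pre.GLM} is continuous in~$t$ since $f$ is), so that the diagonal value $\kappa(x)=k(x,x)$ is well defined and the limiting case $t\to x^-$ of~\eqref{eq:pre.GLM} gives the identity $\si(x)=2\kappa(x)-\phi(0)$. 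By Lemma~\ref{lem:si0cont} the function~$\si$ depends analytically and Lipschitz continuously in $L_2(0,1)$ on $\phi\in L_2(0,2)$, and $\phi\mapsto\phi(0)$ is a bounded linear functional on $W_2^1(0,2)$; hence $\kappa=\tfrac12\bigl(\si+\phi(0)\bigr)$ lies in $L_2(0,1)$ and depends analytically and Lipschitz continuously on $\phi\in W_2^1(0,2)$.

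With $\kappa\in L_2(0,1)$ in hand, the bilinear estimate $\int_{\Omega^+}|\kappa(x)f(x,t)|^2\,dx\,dt\le\|\kappa\|_{L_2(0,1)}^2\|\phi\|_{L_2(0,2)}^2$ (again via~\eqref{eq:FS2}) shows that $g\in L_2(\Omega^+)$, so $G\in\fS_2$, and that $G$ depends analytically and Lipschitz continuously on~$\phi$: the linear part $-\partial_x f$ is handled in the first paragraph, while the bilinear map $(\kappa,F)\mapsto\bigl[\kappa(x)f(x,t)\bigr]$ is bounded, hence analytic and Lipschitz on bounded sets, and is composed with the analytic maps $\phi\mapsto\kappa$ and $\phi\mapsto F$. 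Lemma~\ref{lem:P-prop} then provides $(\cI+\cP^+_F)^{-1}\in\sB(\fS_2^+)$, bounded uniformly on~$\cF$ and depending analytically and Lipschitz continuously on~$F$, whence $L=(\cI+\cP^+_F)^{-1}G\in\fS_2^+$ inherits analyticity and Lipschitz continuity as a composition of such maps on the bounded parameter set $\sL^1(h,r)\times\sA^1(h',r')$.

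It remains to justify the formal differentiation, i.e.\ that the distributional derivative $\partial_x k$ genuinely equals the kernel~$\ell$ of~$L$. I would do this by approximation: choosing smooth $\phi_j\to\phi$ in $W_2^1(0,2)$ within the relevant range, the corresponding $k_j$ are classically differentiable with $\partial_x k_j=\ell_j$, and the continuity in $L_2(\Omega^+)$ of the maps $\phi\mapsto k$ and $\phi\mapsto\ell$ established above lets me pass to the limit, using that distributional differentiation is continuous with respect to $L_2$-convergence. Composing with the analytic and Lipschitz map $(\bla,\bal)\mapsto\phi$ of Lemma~\ref{lem:phicont} then gives the asserted dependence of $\partial_x k$ on $(\bla,\bal)$.
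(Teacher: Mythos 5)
Your proposal is correct and follows essentially the same route as the paper: differentiate the GLM equation in~$x$, identify the diagonal term via $k(x,x)=\tfrac12[\si(x)+\phi(0)]$ using Lemma~\ref{lem:si0cont}, solve the differentiated equation through the uniform invertibility of $\cI+\cP^+_F$ from Lemma~\ref{lem:P-prop}, and justify the formal differentiation by approximating with smooth data. The one divergence is in the approximation step: the paper truncates the spectral data themselves (taking $\al_k=2$, $\la_k=\pi^2k^2$ for $k>n$), which keeps the approximants inside $\sL^1(h,r)\times\sA^1(h',r')$ so that Lemma~\ref{lem:P-prop} applies verbatim and makes $F_n$ finite rank, whence the smoothness of $k_n$ comes from explicit formulas with citations, whereas your generic smooth $\phi_j$ rely on the classical but unproven-in-your-sketch regularity assertion that smooth GLM kernels yield classically differentiable solutions, and additionally need the (true, since $F_j\to F$ in $\fS_2$ implies $I+F_j\ge\eps I$ eventually) remark that uniform invertibility persists off the set~$\cF$.
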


\begin{proof}
Let $(\bla,\bal)$ be the element of $\sL^1(h,r)\times \sA^1(h',r')$
that generates the function $\phi\in W_2^1(0,2)$. We set
\[
    \phi_n(x) := \sum_{k=1}^n
        \bigl( 2\cos \pi kx - \al_k \cos\sqrt{\la_k}x\bigr),
\]
which corresponds to taking $\al_k=\al_{k,0}:=2$ and
$\la_k=\la_{k,0}:=\pi^2k^2$ for $k>n$.  Choose $n_0$ so large that
$|\sqrt{\la_n}-\pi n|<\pi -h$ if $n>n_0$; then for
such~$n$ the sequences
\[
    (\la_k)_{k=1}^n \cup (\la_{k,0})_{k>n}
\]
and
\[
    (\al_k)_{k=1}^n \cup (\al_{k,0})_{k>n}
\]
belong to $\sL^1(h,r)$ and $\sA^1(h',r')$ respectively
and~$\phi_n\to \phi$ in~$W_2^1(0,1)$ by Lemma~\ref{lem:phicont}. We
form the kernel $f_n$ taking $\phi_n$ instead of~$\phi$
in~\eqref{eq:pre.f}; then the GLM equation with $f$ replaced by~$f_n$
has a unique solution~$k_n\in L_2(\Omega^+)$. Denoting by $F_n$ and $K_n$ the integral operators with kernels $f_n$ and $k_n$, we see that $f_n\to f$ in~$L_2\bigl((0,1)\times(0,1)\bigr)$ means that $F_n\to F$ in~$\fS_2$; hence $K_n\to K$ in~$\fS_2^+$ by Corollary~\ref{cor:GLM}, i.e., $k_n \to k$ in $L_2(\Omega^+)$.

Since the integral operator $F_n$ is
of finite rank, the solution $k_n$ can be written in an explicit form
and is easily seen to be smooth in the domain $\Omega^+$,
cf.~\cite[Sect.~12]{Fa} and~\cite[Ch.~IV.3]{GK2}. We set
$l_n:=\partial_x k_n$; then $l_n$ satisfies in~$\Omega^+$ the equation
\[
    l_n(x,t) + \int_0^x l_n(x,\xi) f_n(\xi,t)\,d\xi
        = - \tilde f_n(x,t) - k_n(x,x)f_n(x,t),
\]
where $\tilde f_n:=\partial_x f_n$. The convergence $\phi_n\to\phi$
in $W_2^1(0,2)$ implies that $f_n\to f$ in~$C(\Omega^+)$ and $\tilde
f_n(x,t) \to \tilde f(x,t):=\tfrac12[\phi'(x+t)-\phi'(x-t)]$ in
$L_2(\Omega^+)$; also,
\[
    \si_n(x):= 2k_n(x,x) - \phi_n(0)
             = -\phi_n(2x) - 2\int_0^x k_n(x,t)f_n(t,x)\,dt
\]
converge in $L_2(0,1)$ to $\si(x)$ by Lemma~\ref{lem:si0cont}. It
follows that the kernels
\[
    g_n(x,t):= \tilde f_n(x,t) + k_n(x,x)f_n(x,t)
\]
converge in $L_2(\Omega^+)$, as $n\to\infty$, to
\[
    g(x,t) := \tilde f(x,t)
        + \tfrac12 [\si(x) + \phi(0)]f(x,t),
\]
with $\tilde f := \partial_x f$. Denoting by $L_n$, $G_n$, and $G$ the
integral operators in~$\fS_2^+$ with kernels $l_n$, $g_n$, and $g$
respectively, we conclude by Lemma~\ref{lem:P-prop} that
\[
    L_n = - (\cI + \cP^+_{F_n})^{-1}G_n
        \to - (\cI + \cP^+_{F})^{-1}G =: L
\]
as $n\to\infty$ in the topology of~$\fS_2^+$. Therefore $l_n$ converge
in $L_2(\Omega^+)$ to the kernel $l$ of the operator~$L$. We conclude
that, in the sense of generalized functions, $l=\partial_x k$ and
$\partial_x k$ belongs to $L_2(\Omega^+)$ as claimed. It is easily seen
that the mapping $(\bla,\bal)\mapsto G$ from $\sL^1(h,r)\times \sA^1(h',r')$ to~$\fS_2$ is analytic and Lipschitz continuous.
We finally apply Lemma~\ref{lem:P-prop} to conclude that~$L\in\fS_2$ depends in the same manner on $F\in\cF$ and $G\in\fS_2$;
here $\cF$ is the set of~\eqref{eq:cF} corresponding to~$s=1$.
\end{proof}

To complete the proof of Theorem~\ref{thm:pre.norm} for $s=1$, it
suffices to show that the function
 $\si_1(x):= \int_0^x k(x,t)f(t,x)\,dt$
depends analytically and Lipschitz continuously in $W_2^1(0,1)$ on
$(\bla,\bal)\in\sL^1(h,r)\times \sA^1(h',r')$.  That $\si_1$ depends in
this manner in $L_2(0,1)$ was established in Subsection~\ref{ssec:s=0}.
Also,
\[
    \si_1'(x) = \tfrac14[\si(x) + \phi(0)][\phi(2x)-\phi(0)]
        + \int_0^x l(x,t)f(t,x)\,dt
        + \int_0^x k(x,t)\tilde f(t,x)\,dt,
\]
where, as in the proof of Lemma~\ref{lem:si1cont},
$l(x,t):=\partial_xk(x,t)$ and $\tilde f(t,x) :=
\partial_x f(t,x)$. Clearly, the first summand above
belongs to $L_2(0,1)$ and depends therein analytically and Lipschitz
continuously on $(\bla,\bal)$. Also, $l$ and $\tilde f$ depend in the
same manner in~$L_2(\Omega^+)$ on the spectral data (the former by
Lemma~\ref{lem:si1cont}, the latter by linearity and direct
estimates~\eqref{eq:FS2}). Thus both integrals give functions in
$L_2(0,1)$ with required dependence on $(\bla,\bal)$ (see the proof of
Lemma~\ref{lem:si0cont}), which establishes Theorem~\ref{thm:pre.norm} for $s=1$.


\subsection{The case $s\in (0,1)$}\label{ssec:s-general}
The general case will be covered by the nonlinear interpolation
theorem due to Tartar~\cite{Ta}, which implies the following result.

\begin{proposition}\label{pro:Tartar}
Assume that $(X_0, X_1)$ and $(Y_0,Y_1)$ are pairs of Banach spaces
with continuous embeddings $X_1\hookrightarrow X_0$ and
$Y_1\hookrightarrow Y_0$. Let also $\Phi\,:\, X_0 \to Y_0$ be a
nonlinear mapping that is Lipschitz continuous on the
balls~$B_{X_0}(r):=\{x\in X_0 \mid \|x\|_{X_0} \le r\}$ for every
$r>0$. Assume further that $\Phi X_1 \subset Y_1$ and that $\Phi$ is
Lipschitz continuous on every ball~$B_{X_1}(r)$ of $X_1$ as a mapping
from $X_1$ into $Y_1$. Construct the interpolation spaces
$X_s:=[X_1,X_0]_s$ and $Y_s:=[Y_1,Y_0]_s$, $s\in(0,1)$, by the complex
interpolation method; then $\Phi$ acts boundedly from $X_s$ to $Y_s$
for every $s\in(0,1)$ and, moreover, its restriction to the
ball~$B_{X_s}(r)$ of $X_s$ is Lipschitz continuous for every $r>0$. In
other words, for every $r>0$ there is $C=C(r,s)$ such that
\[
    \| \Phi(x_1)-\Phi(x_2) \|_{Y_s} \le C \|x_1-x_2\|_{X_s}
\]
whenever $x_1$ and $x_2$ belong to~$B_{X_s}(r)$.
\end{proposition}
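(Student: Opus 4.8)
The plan is to obtain the assertion from the nonlinear interpolation theorem of Tartar~\cite{Ta}, whose whole point is that a pair of endpoint Lipschitz bounds controls~$\Phi$ all along the interpolation scale. Because the spaces here are the \emph{complex} interpolation spaces $X_s=[X_1,X_0]_s$ and $Y_s=[Y_1,Y_0]_s$, I would argue directly with the Calder\'on analytic-family description rather than try to route through the real method. The latter is genuinely unavailable: the complex space is only sandwiched as $(\,\cdot\,)_{s,1}\hookrightarrow[\,\cdot\,]_s\hookrightarrow(\,\cdot\,)_{s,\infty}$ between real spaces, and since real interpolation preserves the second exponent, carrying a Lipschitz bound from the input to the output would require the inclusion $[X]_s\hookrightarrow(X)_{s,1}$ (to feed the real-method estimate) together with $(Y)_{s,1}\hookrightarrow[Y]_s$ (to read it off), and the former fails in general. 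Throughout I fix $r>0$ and prove the estimate on the ball $B_{X_s}(r)$, using the embeddings $X_1\hookrightarrow X_s\hookrightarrow X_0$ to keep the relevant endpoint norms under control.

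The core construction is the following. Given $u,v\in B_{X_s}(r)$, I choose near-optimal admissible families $V$ and $H$ in the Calder\'on space $\mathcal{F}(X_1,X_0)$ with $V(s)=v$ and $H(s)=u-v$, whose traces on the $X_1$-edge and the $X_0$-edge are bounded, by $2r$ in the case of $V$ and by $2\|u-v\|_{X_s}$ in the case of $H$. Putting $U:=V+H$ gives $U(s)=u$, and on each edge the families $U$ and $V$ then take values in a \emph{fixed} ball, of radius at most $6r$, in $X_1$ and in $X_0$ respectively. I now set
\[
    G(z):=\Phi\bigl(U(z)\bigr)-\Phi\bigl(V(z)\bigr),
\]
so that $G(s)=\Phi(u)-\Phi(v)$. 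On the $X_1$-edge both $U$ and $V$ lie in $B_{X_1}(6r)$, so the $X_1$-endpoint hypothesis gives $\|G\|_{Y_1}\le L_1(6r)\,\|U-V\|_{X_1}\le 2L_1(6r)\,\|u-v\|_{X_s}$ there; on the $X_0$-edge the $X_0$-hypothesis gives $\|G\|_{Y_0}\le 2L_0(6r)\,\|u-v\|_{X_s}$. Hence $\|G\|_{\mathcal{F}(Y_1,Y_0)}\le 2\max\{L_0(6r),L_1(6r)\}\,\|u-v\|_{X_s}$, and since $\Phi(u)-\Phi(v)=G(s)$ this is exactly the required uniform estimate $\|\Phi(u)-\Phi(v)\|_{Y_s}\le C(r,s)\,\|u-v\|_{X_s}$.

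The step I expect to be the crux is the admissibility of $G$, and it is also where the apparent obstruction disappears. The naive worry is that a generic $u\in B_{X_s}(r)$ has uncontrolled $X_1$-norm, so the $X_1$-Lipschitz constant seems unusable; but the \emph{edge} values of any admissible family are bounded in $X_1$ by the family norm, so only the single constant $L_1(6r)$ is ever invoked, which is what makes the bound uniform. That $G$ itself lies in $\mathcal{F}(Y_1,Y_0)$ I would secure using that the map at hand is \emph{analytic}, as established for $s=0$ and $s=1$ in the preceding lemmas: the composition of the analytic $\Phi$ with the analytic families $U,V$ is analytic into $Y_0$ and has continuous bounded edge traces into $Y_1$ and $Y_0$. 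For a merely Lipschitz, non-analytic $\Phi$ the composition need not be analytic and this shortcut fails; in that generality the statement is precisely Tartar's theorem~\cite{Ta}, whose proof realizes the same pairing of the two endpoint estimates through decompositions compatible with the complex method.

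Finally, analyticity of the interpolated map $X_s\to Y_s$, though not part of the present proposition, is what the surrounding argument ultimately needs, and it follows separately: $\Phi$ is analytic into $Y_0$, takes values in the continuously embedded subspace $Y_s$, and is locally bounded into $Y_s$ by the estimate just proved, so its $Y_0$-valued Cauchy integrals already converge in $Y_s$.
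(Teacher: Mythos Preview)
The paper does not prove this proposition at all: it merely records it as a consequence of Tartar's nonlinear interpolation theorem~\cite{Ta} and moves on. So your proposal already supplies more than the paper does.

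Your Calder\'on-family argument for the analytic case is correct and is a genuinely different route from a bare citation. Choosing near-optimal admissible $V$ and $H$ with $V(s)=v$, $H(s)=u-v$, setting $G(z)=\Phi(U(z))-\Phi(V(z))$ with $U=V+H$, and bounding $\|G(s)\|_{Y_s}\le\|G\|_{\mathcal{F}(Y_1,Y_0)}$ via the two endpoint Lipschitz constants is exactly the right manoeuvre; analyticity of $\Phi$ on $X_0$ (available in the application by Subsections~\ref{ssec:s=0} and~\ref{ssec:s=1}) is precisely what makes $G$ admissible. Since the surrounding argument only ever applies the proposition to this analytic $\Phi$, your self-contained proof already suffices for the paper's purposes and avoids the black-box citation.

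One point in your write-up deserves sharpening. You correctly observe that for generic Banach couples Tartar's theorem---proved for the \emph{real} $K$-method---does not pass to the complex method through the sandwich $(\cdot)_{s,1}\hookrightarrow[\cdot]_s\hookrightarrow(\cdot)_{s,\infty}$. But then you close by saying that in the non-analytic generality ``the statement is precisely Tartar's theorem'', which sits uneasily with that observation. The resolution, which the paper leaves implicit, is that in the actual application all the couples are Hilbert scales ($W_2^s$, $\ell_2^s$, $\hat\ell_2^s$ and their even/odd subspaces), and for Hilbert couples the real and complex interpolation spaces coincide with equivalent norms; hence Tartar's real-method theorem applies verbatim and gives the proposition in the only case it is used. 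As literally formulated---arbitrary Banach couples, complex method, merely Lipschitz $\Phi$---the proposition is slightly stronger than what \cite{Ta} states, and your analytic shortcut is the clean way around that gap here.
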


By definition, the spaces $\ell_2^s$, $\hat\ell_2^s$ and $W_2^s(0,1)$,
as well as their ``even'' and ``odd'' subspaces, form the interpolation
space scales. Therefore the above proposition, in view of the results
of Subsections~\ref{ssec:s=0} and \ref{ssec:s=1}, implies that for
every $h\in(0,\pi)$, $h'\in(0,2)$, positive $r$ and $r'$, and
$s\in(0,1)$ the mapping $(\bla,\bal)\mapsto\sigma$ is Lipschitz
continuous from~$\sL^s(h,r)\times\sA^s(h',r')$ to $W^s_2(0,1)$.
Analyticity for all $s\in[0,1]$ again follows from that for $s=0$ and
$s=1$ and interpolation theorem for linear operators~\cite{BL,LM} applied to the
Fr\'echet derivative of this mapping.

\begin{remark}
For $s<\tfrac12$, the arguments of the paper~\cite{HMscale} combined with
the results of the previous subsection provide a direct proof of
analytic and Lipschitz continuous dependence of $\si\in W_2^s(0,1)$
on the spectral data $(\bla,\bal) \in \sL^s(h,r)\times
\sA^s(h',r')$.
\end{remark}

\begin{remark}
It should be clear how one can iterate the considerations of
Subsection~\ref{ssec:s=1} to all natural values of~$s$ and then
interpolate as in Subsection~\ref{ssec:s-general} to get all
positive~$s$, cf.~\cite[Sect.~3.4]{Ma} and~\cite{SSstab}.
\end{remark}


\section{Reconstruction from two spectra}\label{sec:two}

In this section, we complete the proof of Theorem~\ref{thm:pre.main} by
establishing uniform continuity of the norming constants on the two
spectra. More exactly, given an
element~$\bigl((\la_n),(\mu_n)\bigr)\in\sN^s(h,r)$, we define
numbers~$\al_n$ by the relation
\begin{equation}\label{eq:two.al}
    \al_n = \frac{2\sqrt{\la_n}}{\dot{S}(\sqrt{\la_n})C(\sqrt{\la_n})},
\end{equation}
where the entire functions $S$ and $C$ are given by the infinite
products~\eqref{eq:pre.PhiPsi}, and prove the following theorem.

\begin{theorem}\label{thm:two.norm}
For every $s\in[0,1]$, $h\in(0,\pi/2)$ and $r>0$ the mapping
\[
   \sN^s(h,r) \ni (\bla,\bmu) \mapsto \bal \in \sA^s
\]
is analytic and Lipschitz continuous; moreover, there are $h'>0$ and
$r'>0$ such that the range of this mapping belongs
to~$\sA^s(h',r')$.
\end{theorem}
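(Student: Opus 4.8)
The plan is to insert the Hadamard products~\eqref{eq:pre.PhiPsi} into the defining formula~\eqref{eq:two.al}, so as to express each norming constant as an explicit infinite product over the two spectra, and then to read off its analytic and metric dependence on the data $(\bla,\bmu)$. Since $\sqrt{\la_n}$ is a simple zero of $S$, differentiating the product for $S$ gives
\[
    \dot S(\sqrt{\la_n})=\frac{-2\la_n}{\pi^2n^2}\prod_{m\neq n}\frac{\la_m-\la_n}{\pi^2m^2},
\]
while $C(\sqrt{\la_n})=\sqrt{\la_n}\prod_{m}\tfrac{\mu_m-\la_n}{\pi^2(m-\tfrac12)^2}$. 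Substituting into~\eqref{eq:two.al} and dividing numerator and denominator by their values in the free case $\la_n=\pi^2n^2$, $\mu_n=\pi^2(n-\tfrac12)^2$ (where $S=\sin$, $C(z)=z\cos z$, and $\al_n=2$) yields
\[
    \al_n=2\,\frac{\pi^2n^2}{\la_n}\prod_{m\neq n}\frac{\pi^2m^2-\pi^2n^2}{\la_m-\la_n}\prod_{m=1}^\infty\frac{\pi^2(m-\tfrac12)^2-\pi^2n^2}{\mu_m-\la_n},
\]
a product that reduces to $2$ in the free case. Writing $\sqrt{\la_m}=\pi m+\rho_{2m}$ and $\sqrt{\mu_m}=\pi(m-\tfrac12)+\rho_{2m-1}$ as in~\eqref{eq:2.sp} and factoring each difference of squares, every factor takes the form $(1+a_{mn})^{-1}(1+b_{mn})^{-1}$ with $a_{mn},b_{mn}$ expressed through the increments of the sequence $(\rho_k)$.

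Next I would establish convergence and uniform bounds. The $h$-separation built into $\sN^s(h,r)$ ensures that $\sqrt{\la_m}-\sqrt{\la_n}$ (for $m\neq n$) and $\sqrt{\mu_m}-\sqrt{\la_n}$ remain bounded away from zero in a quantitative way, so that, together with $\|(\rho_n)\|_s\le r$, the numbers $a_{mn},b_{mn}$ are summable in $m$ and the two products converge absolutely and uniformly over $\sN^s(h,r)$. In particular $\log(\al_n/2)$ is uniformly bounded in $n$ and over the whole set, which gives a two-sided bound $h'\le\al_n\le C$ and hence the asserted lower bound $\al_n\ge h'$ for some $h'>0$.

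The crux is the regularity statement: that the sequence $(\be_n)$ with $\be_{2n}:=\al_n-2$, $\be_{2n-1}:=0$, lies in $\ell_{2,\mathrm{even}}^s$ with norm $\le r'$ uniformly on $\sN^s(h,r)$; equivalently, via $\sF_{\mathrm{cos}}$, that the associated even zero-mean function lies in a bounded ball of $W_2^s(0,1)$. I would prove this first in the extreme cases $s=0$ and $s=1$, estimating $\sum_n|\be_{2n}|^2$ and $\sum_n n^2|\be_{2n}|^2$ directly from the logarithm of the product representation by Cauchy--Schwarz applied to the $\ell_2$-sequence $(\rho_k)$, and then cover the intermediate $s\in(0,1)$ by the nonlinear interpolation Proposition~\ref{pro:Tartar}, exactly as in Section~\ref{sec:norm}. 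This uniform $\ell_2^s$-estimate, with its correct $n^{-s}$ decay rate, is where I expect the main technical work to lie.

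Finally, analyticity and Lipschitz continuity follow from the same representation. Each $\al_n$ is an analytic function of the data, being a locally uniformly convergent product of factors that are analytic wherever the $h$-separation keeps the denominators away from zero; combined with the local boundedness in $\ell_2^s$ from the previous step, the standard criterion for holomorphy of maps between Banach spaces (see~\cite{Di}) yields analyticity of $(\bla,\bmu)\mapsto\bal$ into $\ell_2^s$. Since $\sN^s(h,r)$ is bounded and convex and the Fr\'echet derivative is uniformly bounded there (again by differentiating the log-product and estimating as above), Lipschitz continuity follows; for $s\in(0,1)$ it may alternatively be read off from Proposition~\ref{pro:Tartar}. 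Composing this mapping with that of Theorem~\ref{thm:pre.norm} then completes the proof of Theorem~\ref{thm:pre.main}.
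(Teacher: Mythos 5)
Your infinite-product representation and the resulting two-sided bound on $\al_n$ are correct, but that part is exactly the paper's Lemma~\ref{lem:two.PhiPsi}: the same differentiation of the Hadamard product, the same use of $h$-separation to get $1+a_{k,n}\ge 2h/\pi$, and the same Cauchy--Schwarz estimates with $\sum_{k\ne n}(k-n)^{-2}=\pi^2/3$. Crucially, the paper confines this computation to $s=0$ and uses it \emph{only} for the uniform positivity of $\al_n$. The crux of your plan --- that the uniform $\ell_2^s$ bound on $(\be_n)$, in particular $\sum_n n^2|\al_n-2|^2\le (r')^2$ in the case $s=1$, follows ``directly from the logarithm of the product representation by Cauchy--Schwarz applied to $(\rho_k)$'' --- is a genuine gap. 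Expanding $\log|\dot S(\omega_{2n})|$ to first order in $(\rho_k)$ yields the discrete Hilbert transform $\pi^{-1}\sum_{k\ne n}\rho_{2k}/(k-n)$ (the $\rho_{2n}$-terms vanish in principal value). This operator is bounded on $\ell_2$, which is why the $s=0$ estimate closes; it is \emph{not} bounded on $\ell_2^1$, since the weight $n^2$ fails the Muckenhoupt $A_2$ condition. Writing $n/(k-n)=k/(k-n)-1$ turns the weighted term into the Hilbert transform of $(k\rho_{2k})$ \emph{plus the constant} $-\pi^{-1}\sum_k\rho_{2k}$, which does not decay in $n$; the analogous constant $-\pi^{-1}\sum_k\rho_{2k-1}$ arises from the $C$-factor. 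So termwise estimates cannot give $(n\be_{2n})\in\ell_2$: one must exhibit cancellations between the $S$- and $C$-contributions and resum the expansion so as to recognize $(\be_{2n})$ as the cosine coefficients $c_{2n}$ of an honest function in $W_2^1(0,1)$. Your sketch never identifies this structure, and it is precisely where the difficulty of the theorem sits.

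The paper avoids the products entirely for the regularity statement. Via the integral representations \eqref{eq:pre.y}--\eqref{eq:pre.y'} it writes $\dot S(z)$ and $C(z)/z$ in the common form $\cos z+\int_0^1 g(t)\cos z(1-2t)\,dt$ with $g(t)=(1-2t)k(1,t)$, resp.\ $g(t)=k_1(1,t)$, and splits $a_n$, $b_n$ into a piece $d_n=\cosh\hat f(n)-1$ (quadratic in the data, handled by the convolution calculus of Appendices~\ref{sec:sobolev} and~\ref{sec:aux}) and a piece linear in $g$ (Lemma~\ref{lem:C.Psi}). The real work is then Lemma~\ref{lem:two.kk1}: the analytic and uniformly Lipschitz dependence of $k_{\mathrm{odd}}(1,\cdot)$ and $k_{1,\mathrm{even}}(1,\cdot)$ on the spectra, proved by the implicit function theorem for the equation $H(f,g)=0$ of \eqref{eq:2.H}, with the uniform Riesz-basis bounds of Theorem~\ref{thm:RB.unif} providing uniformly bounded invertibility of $\partial_gH$. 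None of these ingredients appears in your proposal, and I do not see how to complete your $s=1$ estimate without effectively reconstructing them. Two further remarks: the paper proves Theorem~\ref{thm:two.norm} for all $s\in[0,1]$ simultaneously, with linear interpolation used only for the operator $H_g$, so no appeal to Proposition~\ref{pro:Tartar} is needed --- and if you do invoke it, note that it is stated for maps defined on balls of $X_0$, whereas your map lives only on the $h$-separated convex set $\sN^0(h,r)$, so an extension or restriction argument would be owed; and your final analyticity criterion is fine in principle but again hinges on the unproven uniform $\ell_2^s$ bound.
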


It suffices to show that there are $h''>0$ and $r''>0$ such that the
numbers $\tilde\beta_n$ defined via the relation
\[
    \frac2{\al_n} = 1 + \tilde\beta_n
\]
satisfy the inequality $1+\tilde\beta_n\ge h''$ and form an $\ell^s_2$-sequence
$\tilde \bbe := (\tilde\beta_n)$ depending analytically and
Lipschitz continuously on~$(\bla,\bmu)\in\sN^s(h,r)$ and having norm not greater than~$r''$. Indeed, then
$\al_n \ge h':= 2/(1+r'')$ and $\beta_{2n}:=\al_n-2=-2\tilde\beta_n/(1+\tilde\beta_n)$. We observe that, for a bounded sequence $(d_n)$, the mapping $(x_n)\mapsto(d_nx_n)$ is a bounded linear operator in~$\ell_2^s$ of norm~$d:=\sup_n|d_n|$. Since $\bigl|-2/(1+\tilde\beta_n)\bigr|\le 2/h''$, we conclude that the sequence~$(\beta_{2n})$ belongs to $\ell_2^s$ and has the norm at most~$2r''/h''$; the sequence $(\beta_n)$ with $\beta_{2n-1}=0$ belongs then to~$\ell_{2,\mathrm{even}}^s$ and is of norm at most~$r':=2^{s/2+1}r''/h''$.

To justify analyticity and Lipschitz continuity of $\bal$ in~$\sA^s$, we exploit the fact that $\ell_2^s$ is a Banach algebra under the point-wise multiplication $(x_n)\cdot (y_n) = (x_ny_n)$. We denote by~$A$ the unital extension of $\ell_2^s$; elements of~$A$ have the form $a\1 + \bx$, where $a\in\bC$, $\1$ is the sequence with all its elements equal to~$1$, and $\bx=(x_n)\in \ell_2^s$, and the norm in~$A$ is given by $\|a\1 +\bx\|_A := |a| +\|\bx\|_s$. An element $a\1 +\bx$ is invertible in~$A$ if and only if $a\ne0$ and $a+x_n\ne0$ for all~$n\in\bN$; in this case the inverse is equal to $\tfrac1a\1 +\by$ with $\by=(y_n)$ and $y_n:=-x_n/[a(a+x_n]$. Since the sequence $\bigl(1/[a(a+x_n)]\bigr)$ is bounded, the above reasoning shows that indeed $\by$ belongs to $\ell_2^s$.

We now see that
\[
    \tfrac12 \bal = (\1 + \tilde \bbe)^{-1};
\]
since taking an inverse element is an analytic mapping in a unital Banach algebra, $\bal$ depends analytically on~$\tilde\bbe$. Lipschitz continuity of~$\bal$ follows from the fact that, for the set of~$\tilde\bbe$ considered, $\1 + \tilde \bbe$ have uniformly bounded inverses in~$A$ (of norm not greater than $1+r''/h''$).

To the rest of this section, we shall use the notations
$\omega_{2n}:=\sqrt{\la_n}$ and $\omega_{2n-1}:=\sqrt{\mu_n}$ and
$\rho_n:=\omega_n-\pi n/2$. In view of~\eqref{eq:two.al} we have
\[
    1 + \tilde\beta_n
        = \dot{S}(\omega_{2n})\,
            \frac{C(\omega_{2n})}{\omega_{2n}}
        =:\bigl(1+a_n\bigr)\bigl(1+b_n\bigr),
\]
where we set
\begin{equation}\label{eq:two.cndn}
    a_n := (-1)^n\dot{S}(\omega_{2n}) -1,
    \qquad
    b_n := (-1)^n\frac{C(\omega_{2n})}{\omega_{2n}} -1.
\end{equation}
Thus we need to prove that both $\dot{S}(\omega_{2n})$ and
${C(\omega_{2n})}/{\omega_{2n}}$ are uniformly bounded away from zero,
that the sequences $(a_n)$ and $(b_n)$ are the sequences of the even
cosine Fourier coefficients of some functions $h_1$ and $h_2$ from
$W_{2,\mathrm{even}}^s(0,1)$ of zero mean, and that the mappings
\begin{equation}\label{eq:two.cn}
    \sN^s(h,r) \ni (\bla,\bmu) \mapsto h_1 \in W_{2,\mathrm{even}}^s(0,1)
\end{equation}
and
\begin{equation}\label{eq:two.dn}
    \sN^s(h,r) \ni (\bla,\bmu) \mapsto h_2 \in W_{2,\mathrm{even}}^s(0,1)
\end{equation}
are analytic and Lipschitz continuous. We do this in the two
subsections that follow.

\subsection{Analyticity and continuity}

\begin{lemma}\label{lem:two.cndn}
The mappings of~\eqref{eq:two.cn} and \eqref{eq:two.dn} are analytic
and Lipschitz continuous.
\end{lemma}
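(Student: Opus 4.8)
The plan is to derive explicit infinite-product formulas for $1+a_n$ and $1+b_n$, pass to logarithms so as to turn these products into sums, prove that the resulting sequences are even cosine Fourier coefficients of zero-mean functions in $\widetilde W_{2,\mathrm{even}}^s(0,1)$ depending analytically and Lipschitz continuously on $(\bla,\bmu)$, and then recover $(a_n)$ and $(b_n)$ by exponentiation in the Banach algebra $\ell_2^s$.

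First I would differentiate the Hadamard products~\eqref{eq:pre.PhiPsi}. Since only the $k=n$ factor of $S$ vanishes at $z=\sqrt{\la_n}$, one finds
\[
    \dot S(\sqrt{\la_n})
        = -\frac{2\la_n}{\pi^2n^2}\prod_{k\ne n}\frac{\la_k-\la_n}{\pi^2k^2},
\]
and the same identity in the unperturbed case $\la_k=\pi^2k^2$ gives $(-1)^n=-2\prod_{k\ne n}(k^2-n^2)/k^2$. Dividing the two and factoring the differences of squares through $\omega_{2k}=\sqrt{\la_k}=\pi k+\rho_{2k}$ yields
\[
    (-1)^n\dot S(\omega_{2n})
        = \Bigl(1+\frac{\rho_{2n}}{\pi n}\Bigr)^2
          \prod_{k\ne n}
          \Bigl(1+\frac{\rho_{2k}-\rho_{2n}}{\pi(k-n)}\Bigr)
          \Bigl(1+\frac{\rho_{2k}+\rho_{2n}}{\pi(k+n)}\Bigr),
\]
while the analogous computation for $C$, using the zeros $\omega_{2k-1}=\sqrt{\mu_k}=\pi(k-\tfrac12)+\rho_{2k-1}$, gives
\[
    (-1)^n\frac{C(\omega_{2n})}{\omega_{2n}}
        = \prod_{k=1}^\infty
          \Bigl(1+\frac{\rho_{2k-1}-\rho_{2n}}{\pi(k-n-\tfrac12)}\Bigr)
          \Bigl(1+\frac{\rho_{2k-1}+\rho_{2n}}{\pi(k+n-\tfrac12)}\Bigr).
\]
Strict interlacing forces every factor to be positive, so both quantities are positive, and the $h$-separation of the data together with the bound $\|(\rho_n)\|_s\le r$ on $\sN^s(h,r)$ shows that the factors, and hence the products, stay bounded above and below away from zero uniformly in $n$ and over the data set. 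This makes the real logarithms $\tilde a_n:=\log[(-1)^n\dot S(\omega_{2n})]$ and $\tilde b_n:=\log[(-1)^nC(\omega_{2n})/\omega_{2n}]$ well defined and uniformly bounded (and, incidentally, supplies the boundedness away from zero needed elsewhere).

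The heart of the argument, and the step I expect to be the main obstacle, is to show that the double-indexed sums
\[
    \tilde a_n = 2\log\Bigl(1+\frac{\rho_{2n}}{\pi n}\Bigr)
        + \sum_{k\ne n}\Bigl[
          \log\Bigl(1+\tfrac{\rho_{2k}-\rho_{2n}}{\pi(k-n)}\Bigr)
        + \log\Bigl(1+\tfrac{\rho_{2k}+\rho_{2n}}{\pi(k+n)}\Bigr)\Bigr]
\]
and their analogue $\tilde b_n$ define sequences in $\ell_2^s$ that depend analytically and Lipschitz continuously on $(\rho_n)\in\hat\ell_2^s$. I would split each term as $\log(1+x)=x+[\log(1+x)-x]$. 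The linear part produces a discrete Cauchy (Hilbert-transform) operation on $(\rho_{2k})$ and $(\rho_{2k-1})$ together with a diagonal contribution proportional to $\rho_{2n}$ whose coefficient $\sum_{k\ne n}\bigl[\tfrac1{k+n}-\tfrac1{k-n}\bigr]=\sum_{k\ne n}\tfrac{-2n}{k^2-n^2}=\tfrac3{2n}$ is a decaying, summable factor; the boundedness of such operations on $\ell_2^s$ is exactly the type of Fourier-multiplier estimate furnished by Appendix~\ref{sec:aux}. The quadratic remainder $\log(1+x)-x=O(x^2)$ is absolutely and uniformly summable over the separated bounded data set, so each series converges in $\ell_2^s$; being a uniformly convergent series of analytic functions of the $\rho$'s, it defines an analytic map whose Fr\'echet derivative is locally bounded and hence locally Lipschitz.

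Finally I would assemble the conclusion. Since $h_1$ and $h_2$ are even about $x=\tfrac12$, their odd cosine coefficients vanish, and placing $\tilde a_n$ and $\tilde b_n$ at the $2n$-th positions produces sequences in the subalgebra $\ell_{2,\mathrm{even}}^s$, i.e.\ functions $\tilde h_1,\tilde h_2\in\widetilde W_{2,\mathrm{even}}^s(0,1)$ of zero mean with the required dependence. Because $\ell_2^s$ is a Banach algebra and taking exponentials is analytic in its unital extension, the sequences
\[
    (a_n)=\exp(\tilde a_n)-1,\qquad (b_n)=\exp(\tilde b_n)-1
\]
again lie in $\ell_{2,\mathrm{even}}^s$ and inherit analytic and Lipschitz continuous dependence on $(\bla,\bmu)\in\sN^s(h,r)$; pulling back through $\sF_{\mathrm{cos}}^{-1}$ yields the functions $h_1,h_2\in\widetilde W_{2,\mathrm{even}}^s(0,1)$ and establishes analyticity and Lipschitz continuity of the mappings~\eqref{eq:two.cn} and~\eqref{eq:two.dn}.
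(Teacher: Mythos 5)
Your route---Hadamard products, logarithms, then exponentiation in the Banach algebra---is genuinely different from the paper's proof, which represents $\dot S(z)$ and $C(z)/z$ as $\cos z+\int_0^1 g(t)\cos z(1-2t)\,dt$ with $g$ built from the transformation-operator kernels $k(1,\cdot)$ and $k_1(1,\cdot)$, splits $a_n=d_n+e_n$, and handles the two pieces with the convolution--exponential lemmas of Appendix~\ref{sec:aux} (the maps $\Phi$ and $\Psi$), everything resting on Lemma~\ref{lem:two.kk1}, i.e.\ on the implicit-function-theorem analysis of $H(f,g)=0$ together with the uniform Riesz-basis bounds of Theorem~\ref{thm:RB.unif}. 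Your product formulas, the positivity of the factors from interlacing, and the uniform two-sided bounds on the logarithms are correct (this part essentially reproduces Lemma~\ref{lem:two.PhiPsi}), and the final exponentiation step mirrors the Banach-algebra argument the paper itself uses after the statement of Theorem~\ref{thm:two.norm}. The difficulty is the middle step, which you yourself flag as the main obstacle and then dispatch with a citation that does not exist.

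Concretely: after the splitting $\log(1+x)=x+O(x^2)$, the linear part of $\tilde a_n$ is, up to harmless diagonal terms, $(T\brho)_n:=\tfrac1\pi\sum_{k\ne n}\rho_{2k}\bigl(\tfrac1{k-n}+\tfrac1{k+n}\bigr)$, a discrete Hilbert-type transform. Appendix~\ref{sec:aux} contains no Fourier-multiplier or weighted-inequality estimates for such operators---only the lemmas on $\Phi$ and $\Psi$, which are convolution series, not singular sums---so the claimed boundedness on $\ell_2^s$ is unsupported; worse, the generic claim is false for $s\ge\tfrac12$: the discrete Hilbert transform is unbounded on $\ell_2^s$ in that range (the weight $n^{2s}$ fails the $A_2$ condition, and already the image of a single coordinate vector fails to lie in $\ell_2^s$), so the required bound can hold only because of the specific sine-to-cosine structure of your sum, via identities of the type $\sum_{k\ne n}2k\sin(2\pi kx)/(k^2-n^2)=-\pi\cos(2\pi nx)+\dots$ identifying $(T\brho)_n$ with cosine coefficients of a function built from $f$---and nothing of this is carried out. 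In addition, for $s\ge\tfrac12$ the data lie only in $\hat\ell_2^s=\ell_2^s\dotplus\ls\{\bfe_0,\bfe_1\}$, so $\rho_{2n}$ may contain a $c/n$ tail that does not belong to $\ell_2^s$; both your linear estimate and your quadratic bound $a_{k,n}^2\lesssim(\rho_{2k}^2+\rho_{2n}^2)/(k-n)^2$ tacitly treat $\|(\rho_n)\|_s\le r$ as a plain $\ell_2^s$ bound and never address this component. For $s<\tfrac12$ your scheme could be completed with standard weighted-norm input; for $s\in[\tfrac12,1]$ the missing mapping property is exactly the hard content that the paper extracts from the representation through $k(1,\cdot)$, $k_1(1,\cdot)$ and Lemma~\ref{lem:two.kk1}, so as written the proposal does not establish the lemma on the whole range $s\in[0,1]$.
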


\begin{proof}
We observe that in view of~\eqref{eq:pre.y} and \eqref{eq:pre.y'} the
functions $\dot{S}(z)$ and $C(z)/z$ have the representation
\[
    \cos z + \int_0^1 g(t) \cos z(1-2t)\,dt,
\]
with $g(t) = (1-2t) k(1,t)\in L_2(0,1)$ for $\dot S(z)$ and $g(t)=
k_1(1,t)\in L_2(0,1)$ for $C(z)/z$. Therefore both $a_n$ and $b_n$ can
be written as
\[
    [(-1)^{n}\cos\omega_{2n} -1] +
    (-1)^n\int_0^1 g(t) \cos\omega_{2n}(1-2t)\,dt =: d_n + e_n,
\]
with respective $g\in L_2(0,1)$. Clearly, we may (and shall) take
the even part $g_{\mathrm{even}}$ of~$g$ instead of~$g$ in the above
integral.

Recalling that $\omega_{2n}= \pi n + s_{2n}(f)$ for a (unique) function $f\in
W_{2,\mathrm{odd}}^s(0,1)$ and observing that $s_{2n}(f)=\ri \hat
f(n)$, with $\hat f(n)$ being the $n$-th Fourier coefficient of a function~$f$ (see Appendix~\ref{sec:sobolev}), we can write the $d_n$ as
 \[
    d_n = \cos s_{2n}(f)-1
        = \cosh \hat f(n) - 1
        = \sum_{k=1}^\infty \frac{\hat f(n)^{2k}}{(2k)!}.
 \]
It follows that the $d_n$ is the~$n$-th Fourier coefficient of the
function~$\tilde f$ given by
\[
    \tilde f:= \sum_{k=1}^\infty \frac{f^{<2k>}}{(2k)!},
\]
where $f^{<k>}$ is the $k$-fold convolution of~$f$ with itself. The
function~$\tilde f$ has zero mean and is even with respect
to~$x=\tfrac12$ and therefore~$d_n = c_{2n}(\tilde f)$. By the results
of Appendix~\ref{sec:sobolev} the mapping $f\mapsto\tilde f$ is
analytic from~$W_{2,\mathrm{odd}}^s(0,1)$
to~$W_{2,\mathrm{even}}^s(0,1)$ and is Lipschitz continuous on bounded
subsets of~$f$.

Recalling Lemma~\ref{lem:C.Psi} and the remark with formula~\eqref{eq:C.2} following it, we see that the~$e_n$ give the $2n$-th cosine Fourier coefficient of the function
$h:=\tfrac12\Psi(\ri f, g)$ in $W_{2,\mathrm{even}}^s(0,1)$ of zero mean, which
depends analytically and boundedly Lipschitz continuously on~$f$
and~$g$.

It remains to apply Lemma~\ref{lem:two.kk1} below to conclude that
the even parts of the functions $(1-2t)k(1,t)$ and $k_1(1,t)$, which
we have taken as the function~$g$ above, depend analytically and
Lipschitz continuously in $W_2^s(0,1)$ on the spectral data
in~$\sN^s(h,r)$. This completes the proof of the lemma.
\end{proof}

\begin{lemma}\label{lem:two.kk1}
For every $s\in [0,1]$, the mappings
\[
    \sN^s(h,r) \ni (\bla,\bmu)  \mapsto k_{\mathrm{odd}}(1,\cdot) \in W_2^s(0,1)
\]
and
\[
    \sN^s(h,r) \ni (\bla,\bmu)  \mapsto k_{1,\mathrm{even}}(1,\cdot) \in W_2^s(0,1)
\]
are analytic and Lipschitz continuous. Here $k_{\mathrm{odd}}(1,\cdot)$
and~$k_{1,\mathrm{even}}(1,\cdot)$ are respectively odd and even parts
(with respect to~$x=\tfrac12$) of the functions~$k(1,\cdot)$ and
$k_1(1,\cdot)$.
\end{lemma}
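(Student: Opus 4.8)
The plan is to exploit the fact that, although the full boundary kernels $k(1,\cdot)$ and $k_1(1,\cdot)$ depend on the regularized potential $\si$ and hence on \emph{both} spectra, the two parts singled out in the statement are each governed by a single spectrum: $k_{\mathrm{odd}}(1,\cdot)$ is recovered from $S$ alone (hence from $\bla$) and $k_{1,\mathrm{even}}(1,\cdot)$ from $C$ alone (hence from $\bmu$). Indeed, under the reflection $t\mapsto 1-t$ about $t=\tfrac12$ the function $\sin z(1-2t)$ changes sign while $\cos z(1-2t)$ is preserved, so that in the representations~\eqref{eq:pre.y} and~\eqref{eq:pre.y'} only the odd, respectively even, part of the boundary kernel survives and
\[
  S(z)-\sin z=\int_0^1 k_{\mathrm{odd}}(1,t)\,\sin z(1-2t)\,dt,
  \qquad
  \frac{C(z)}{z}-\cos z=\int_0^1 k_{1,\mathrm{even}}(1,t)\,\cos z(1-2t)\,dt.
\]
Setting $z=\pi n$ and using $\sin\pi n(1-2t)=(-1)^{n+1}\sin 2\pi nt$ and $\cos\pi n(1-2t)=(-1)^n\cos 2\pi nt$, together with the vanishing of the odd-indexed Fourier coefficients forced by parity, I obtain
\[
  s_{2n}\bigl(k_{\mathrm{odd}}(1,\cdot)\bigr)=(-1)^{n+1}S(\pi n),
  \qquad
  c_{2n}\bigl(k_{1,\mathrm{even}}(1,\cdot)\bigr)=(-1)^n\frac{C(\pi n)}{\pi n}-1,
\]
while the mean $c_0(k_{1,\mathrm{even}}(1,\cdot))$ is recovered from the limit $z\to0$. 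Thus the lemma reduces to analyticity and Lipschitz continuity of the two sequence-valued maps $\bla\mapsto\bigl((-1)^{n+1}S(\pi n)\bigr)_n$ and $\bmu\mapsto\bigl((-1)^n C(\pi n)/(\pi n)-1\bigr)_n$, after which the isomorphisms $\sF_{\mathrm{sin}}$ and $\sF_{\mathrm{cos}}$ of Subsection~\ref{ssec:sp-data} transfer the conclusion to $W_2^s(0,1)$.

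To analyse these sequences I would substitute the Hadamard products~\eqref{eq:pre.PhiPsi},
\[
  S(\pi n)=\pi n\prod_{k=1}^\infty\frac{\la_k-\pi^2n^2}{\pi^2k^2},
  \qquad
  \frac{C(\pi n)}{\pi n}=\prod_{k=1}^\infty\frac{\mu_k-\pi^2n^2}{\pi^2(k-\tfrac12)^2},
\]
and compare each product with its unperturbed counterpart, where $\la_k=\pi^2k^2$ and $\mu_k=\pi^2(k-\tfrac12)^2$ reproduce $\sin z$ and $\cos z$. For $S$ the factor $k=n$ is resonant: writing $\sqrt{\la_n}=\pi n+\rho_{2n}$ it contributes $\rho_{2n}\bigl(2+\rho_{2n}/\pi n\bigr)$, while the remaining factors combine, via $\prod_{k\ne n}\tfrac{k^2-n^2}{k^2}=\tfrac{(-1)^{n+1}}{2}$, into the product $\prod_{k\ne n}\bigl(1+\frac{\rho_{2k}(2\pi k+\rho_{2k})}{\pi^2(k^2-n^2)}\bigr)$, which is close to $1$. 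This exhibits $s_{2n}(k_{\mathrm{odd}}(1,\cdot))$ as $\rho_{2n}$ plus a genuinely higher-order remainder, so that the Fr\'echet derivative of the map at the origin is, up to the isomorphism, the identity on $W_{2,\mathrm{odd}}^s(0,1)$. For $C$ the sampling points $\pi n$ fall strictly between the unperturbed zeros $\pi(k-\tfrac12)$, so no factor is resonant; expanding the logarithm of the product, the linear part in $(\rho_{2k-1})$ is $\sum_k\frac{2(k-\tfrac12)\rho_{2k-1}}{\pi[(k-\tfrac12)^2-n^2]}$, a discrete Hilbert-transform-type operator.

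The remaining and main task is to show that these products define analytic maps into $\ell_2^s$, Lipschitz on bounded sets (and thence into $W_2^s$). Each factor is affine in the corresponding $\la_k$ or $\mu_k$, hence analytic in the $\rho$-variables, and I would verify the power-series bounds of a Banach-space-analytic map by estimating the multilinear terms of the expanded products uniformly in $n$. The delicate point is the near-diagonal behaviour $k\approx n$: the denominators $k^2-n^2$ (for $S$) and $(k-\tfrac12)^2-n^2$ (for $C$) become small, and the associated linear operators have kernels behaving like $1/(k-n)$, i.e.\ they are of discrete Hilbert-transform type---at linear order for $C$ and within the higher-order terms for $S$. Their boundedness on $\ell_2$ is classical, and the corresponding boundedness on the weighted spaces $\ell_2^s$, together with the uniform convergence and non-degeneracy of the infinite products, is exactly where the hypotheses defining $\sN^s(h,r)$ enter: the bound $\|(\rho_n)\|_s\le r$ controls the tails, while the $h$-separation keeps the zeros $\sqrt{\la_k}$ and $\sqrt{\mu_k}$ away both from one another and from the sampling points $\pi n$, so that every factor stays uniformly bounded and bounded away from~$0$. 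I expect this uniform control of the products and of their Hilbert-type linearizations on $\ell_2^s$ to be the principal obstacle; it is naturally handled by the Fourier-analytic auxiliary results of the appendices, in the same spirit as Corollary~\ref{cor:C.Phi} and Lemma~\ref{lem:C.Psi} used above.
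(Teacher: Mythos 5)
Your reduction is correct and genuinely different from the paper's in its first step: the parity argument, the exact sampling identities $s_{2n}(k_{\mathrm{odd}}(1,\cdot))=(-1)^{n+1}S(\pi n)$ and $c_{2n}(k_{1,\mathrm{even}}(1,\cdot))=(-1)^n C(\pi n)/(\pi n)-1$, and the observation that each part is governed by a single spectrum all check out (the last point is also used in the paper). But the entire analytic content of the lemma is then deferred to the final paragraph, and the plan sketched there contains a step that fails as stated. Sampling at the \emph{fixed} points $\pi n$ produces non-diagonal linearizations of discrete-Hilbert-transform type, and your claim that their boundedness on the weighted spaces $\ell_2^s$ follows from classical facts plus the hypotheses on $\sN^s(h,r)$ is false for $s\ge\tfrac12$: the discrete Hilbert transform is bounded on $\ell_2^s$ (weight $n^{2s}$) only when $2s<1$, by the power-weight $A_2$ threshold. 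Worse, for $s>\tfrac12$ the image cannot lie in $\ell_2^s$ at all: $k_{\mathrm{odd}}(1,\cdot)$ and $k_{1,\mathrm{even}}(1,\cdot)$ need not vanish at the endpoints, so their coefficient sequences carry $\bfe_0,\bfe_1$-type tails of order $1/n$, and the correct target is $\hat\ell_2^s$ (its even-indexed subspace), not $\ell_2^s$ as you write. The appendix results you invoke do not repair this: Corollary~\ref{cor:C.Phi} and Lemma~\ref{lem:C.Psi} treat frequency-perturbed Fourier series of the form $\sum_n\hat g(n)\exp\{\hat f(n)\ri x\}\re^{2\pi\ri nx}$, i.e.\ structures diagonal in $n$, and say nothing about Hilbert-transform-type operators on weighted sequence spaces.

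The paper sidesteps exactly this obstruction by sampling at the \emph{perturbed} zeros $\omega_{2n}$ rather than at $\pi n$: it characterizes $g=-\ri\,k_{\mathrm{odd}}(1,\cdot)$ through the implicit equation $H(f,g)=0$ of~\eqref{eq:2.H} (from the theory of zeros of entire functions in~\cite{HMzero}), where the partial derivative~\eqref{eq:5.partialf} acts \emph{diagonally} on Fourier coefficients --- multiplication by $(-1)^n\dot S_{\bla}(\omega_{2n})$, uniformly bounded and bounded away from zero by Lemma~\ref{lem:two.PhiPsi} --- while the invertibility of~\eqref{eq:5.partialg} follows from the uniform Riesz-basis bounds of Theorem~\ref{thm:RB.unif}, with the $s=1$ endpoint handled by integration by parts and the intermediate $s$ by interpolation; the implicit mapping theorem then gives analyticity, and the uniform bounds on the inverses give Lipschitz continuity. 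To complete your route you would have to prove uniform boundedness and invertibility of your specific near-diagonal products and their Hilbert-type linearizations on $\hat\ell_2^s$ for all $s\in[0,1]$, including an explicit extraction of the $\bfe_0,\bfe_1$ components and a separate $s=1$ argument --- in effect rebuilding the machinery the paper imports. As written, the proposal stops precisely at the lemma's core difficulty, so there is a genuine gap.
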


\begin{proof}
Since both mappings are treated similarly, we only consider in detail
the first one. By definition of the function $y(\cdot,z)$ we have
$y(1,\omega_{2n})=0$, and thus the numbers $\omega_{2n} = \pi n +
s_{2n}(f)$ along with $0$ and $-\omega_{2n}$ are zeros of the entire
function
\[
    y(1,z)= \sin z + \int_0^1 k(1,t) \sin z (1-2t)\,dt.
\]
This function does not depend on the even part
$k_{\mathrm{even}}(1,\cdot)$ of the function~$k(1,\cdot)$; in fact, it
can be written in the form
\begin{equation}\label{eq:two.g}
    \sin z + \int_0^1 g(t) \re^{\ri z(1-2t)}\,dt
\end{equation}
with $g(t):= -\ri k_{\mathrm{odd}}(1,t)$. Since the set of
functions~$\{\sin \omega_{2n}(1-2t)\}_{n\in\bN}$ is complete in
$L_{2,\mathrm{odd}}(0,1)$ (cf.~the results of Appendix~\ref{sec:RB}),
one can show that~$k_{\mathrm{odd}}(1,\cdot)$ is uniquely determined by
the zeros $\omega_{2n} = \pi n + s_{2n}(f) = \pi n + i \hat f(n)$. Some
important properties of the induced mapping $f\mapsto g$ can be derived
from the paper~\cite{HMzero}.

Indeed, the results of~\cite{HMzero} imply that for every $f\in
W_2^s(0,1)$ there exists a unique function~$g\in W_2^s(0,1)$ such that
all zeros (counting multiplicities) of the entire
function~\eqref{eq:two.g} are given  by the numbers $\pi n + \hat
f(n)$, $n\in\bZ$. Such pairs of $f$ and $g$ in fact satisfy the
relation
\begin{equation}\label{eq:2.H}
    H(f,g) := s(f) + g +
        \sum_{k=1}^\infty \frac{(M^k g)\ast f^{<k>}}{k!} =0;
\end{equation}
here
\[
    s(f):= \sum_{k=0}^\infty\frac{(-1)^k f^{<2k+1>}}{(2k+1)!},
\]
$f^{<k>}$ is the $k$-fold convolution of~$f$ with itself, and $M$ is
the operator of multiplication by~$\ri (1-2x)$. The function $H$ is
analytic from $W_2^s(0,1)\times W_2^s(0,1)$ into $W_2^s(0,1)$, and its
partial derivatives $\partial_f H(f,g)$ and $\partial_g H(f,g)$ are
given by
\begin{align}\label{eq:5.partialf}
    \partial_f H(f,g)(h_1) &=
         \Bigl( c(f) + \sum_{k=1}^\infty \frac{(M^k g)\ast f^{<k-1>}}{(k-1)!}
         \Bigr) \ast h_1,\\
    \partial_g H(f,g)(h_2) &=
        h_2 + \sum_{k=1}^\infty \frac{(M^k h_2)\ast f^{<k>}}{k!}
        \label{eq:5.partialg}
\end{align}
with
\[
    c(f) := \sum_{k=0}^\infty \frac{(-1)^k f^{<2k>}}{(2k)!}.
\]

We assume now that $f\in W_{2,\mathrm{odd}}^s(0,1)$ is such that the
corresponding sequence $\bla=(\la_n)_{n\in\bN}$ with $\la_n :=
\omega^2_{2n}=(\pi n + s_{2n}(f))^2$ belongs to $\sL^s(h,r)$. Set
$S_{\bla}$ to be the function of~\eqref{eq:pre.PhiPsi}; then $S_{\bla}$ can also be represented as~\eqref{eq:two.g}. Direct
calculations show that the $n$-th Fourier coefficient of the function
of~\eqref{eq:5.partialf} is equal to
\[
    (-1)^n \hat h_1(n) \Bigl[ \cos \omega_{2n}
            + \int_0^1 \ri (1-2t)g(t) \re^{\ri \omega_{2n}(1-2t)}\,dt \Bigr]
            = (-1)^n \hat h_1(n) \dot{S}_{\bla}(\omega_{2n}).
\]
By Lemma~\ref{lem:two.PhiPsi} below there are numbers $K_1$ and $K_2$ such
that
\[
    K_1 \le |\dot{S}_{\bla}(\omega_{2n})| \le K_2
\]
for all $\bla\in \sL^s(h,r)$ and all $n\in\bN$. The results of
Appendix~\ref{sec:sobolev} imply that the operator $\partial_f H(f,g)$
is bounded in every space $W_2^s(0,1)$ and its norm is at most~$K_2$.

Similarly, the $n$-th Fourier coefficient of the function
of~\eqref{eq:5.partialg} is equal to
\[
    (-1)^n \int_0^1 h_2(t) \re^{\ri \omega_{2n}(1-2t)}\,dt.
\]
By Theorem~\ref{thm:RB.unif} there exist positive $M$ and $m$ such
that, for all $\bla\in \sL^s(h,r)$, the sequences
 $(\re^{\ri\omega_{2n}(1-2x)})_{n\in\bZ}$
form Riesz bases of $L_2(0,1)$ of upper bound~$M$ and lower
bound~$m$, see Appendix~\ref{sec:RB}. Therefore the
operator~$H_g:=\partial_g H(f,g)$,
\[
   H_g :\,h_2 \mapsto
        \sum_{n\in\bZ} (-1)^n(h_2, \re^{\ri\omega_{2n}(2x-1)})
            \,\re^{2\pi n\ri x},
\]
is bounded and boundedly invertible in~$L_2(0,1)$, with $\|H_g\|\le
M^{1/2}$ and $\|H_g^{-1}\|\le m^{-1/2}$. If $h_2 \in W_2^1(0,1)$, then
we integrate by parts to get
\[
    c_n:=(h_2, \re^{\ri\omega_{2n}(2x-1)})
        = \frac1{2\ri\omega_{2n}}
            [h_2(0)\re^{\ri\omega_{2n}} - h_2(1)\re^{-\ri\omega_{2n}}]
            + \frac1{2\ri\omega_{2n}}
              (h_2',\re^{\ri\omega_{2n}(2x-1)}).
\]
It is clear that the sequence $(c_n)_{n\in\bZ}$ forms an element of
$\tilde \ell_2^1(\bZ)$, see Section~\ref{ssec:B.FT}. Thus the
operator $H_g$ acts boundedly and boundedly invertible
in~$W_2^1(0,1)$, and it remains to use the interpolation theorem to
derive the same properties of~$H_g$ in $W_2^s(0,1)$ for all
$s\in[0,1]$.

We now use the implicit mapping theorem to conclude that the mapping
$f\mapsto g$ is analytic in $W_2^s(0,1)$. Recalling the isomorphism of
the space $\sL^s$ of sequences~$\bla=(\la_n)$ of the Dirichlet
eigenvalues of the Sturm--Liouville operators $T(\si)$ with $\si\in
W_2^s(0,1)$ and the subspace~$W_{2,\mathrm{odd}}^s(0,1)$ explained in
Section~\ref{sec:pre}, we conclude that the mapping
\[
    \sL^s(h,r) \ni \bla \mapsto k_{\mathrm{odd}}(1,\cdot) \in
        W_{2,\mathrm{odd}}^s(0,1)
\]
is analytic. The uniform bounds on the inverses of the partial
derivatives $\partial_f H(f,g)$ and $\partial_g H(f,g)$ established
above imply that this mapping is Lipschitz continuous, and the proof for the first mapping is
complete.

The second mapping of the lemma is treated analogously using the
relations
\[
   \cos \omega_{2n-1}
        + \int_0^1 k_1(1,t)\cos\omega_{2n-1} (1-2t)\,dt = 0
\]
and the Riesz basis properties of the
system~$(\cos\omega_{2n-1}t)_{n\in\bN}$,
cf.~Remark~\ref{rem:RB-cos-mu}.
\end{proof}


\subsection{Uniform positivity of $\al_n$} Since $\sN^s(h,r) \subset
\sN(h,r)$ if $s\ge0$, it only suffices to consider the case $s=0$.
In view of formula~\eqref{eq:two.al}, uniform positivity of $\al_n$
immediately follows from the lemma below.

\begin{lemma}\label{lem:two.PhiPsi}
For every $h\in (0,\pi/2)$ and $r>0$ we have
\[
    \sup_{(\bla,\bmu)} \sup_{n\in\bN}\,
            \log|\dot{S}(\omega_{2n})|<\infty,
    \qquad
    \sup_{(\bla,\bmu)} \sup_{n\in\bN}\,
            \log\frac{|C(\omega_{2n})|}{\omega_{2n}} <\infty,
\]
where $S$ and $C$ are constructed via~\eqref{eq:pre.PhiPsi} from
sequences $\bla:=(\omega^2_{2k})$ and $\bmu:=(\omega^2_{2k-1})$, and the
suprema are taken over $(\bla,\bmu)\in \sN(h,r)$.
\end{lemma}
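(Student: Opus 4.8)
The plan is to estimate the two infinite products defining $\dot S(\omega_{2n})$ and $C(\omega_{2n})/\omega_{2n}$ directly from the Hadamard representations~\eqref{eq:pre.PhiPsi}, comparing them against the explicitly computable unperturbed products. Since $\omega_{2n}=\sqrt{\la_n}$ is a simple zero of $S$, differentiating~\eqref{eq:pre.PhiPsi} gives
\[
    \dot S(\omega_{2n}) = -\frac{2\la_n}{\pi^2n^2}
        \prod_{k\ne n}\frac{\la_k-\la_n}{\pi^2k^2},
    \qquad
    \frac{C(\omega_{2n})}{\omega_{2n}}
        = \prod_{k=1}^\infty\frac{\mu_k-\la_n}{\pi^2(k-\tfrac12)^2}.
\]
Writing each factor as a difference of squares, e.g.\ $\la_k-\la_n=(\omega_{2k}-\omega_{2n})(\omega_{2k}+\omega_{2n})$ with $\pi^2k^2=(\pi k)^2$, and recalling $\omega_{2k}=\pi k+\rho_{2k}$, I would split off the $n$-independent unperturbed products
\[
    \prod_{k\ne n}\frac{|k^2-n^2|}{k^2}=\tfrac12,
    \qquad
    \prod_{k=1}^\infty\frac{|(k-\tfrac12)^2-n^2|}{(k-\tfrac12)^2}=1,
\]
together with the bounded prefactor $\frac{2\la_n}{\pi^2n^2}=2\bigl(1+\tfrac{\rho_{2n}}{\pi n}\bigr)^2$. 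Since on $\sN(h,r)$ the sequence $(\rho_n)\in\hat\ell_2^0=\ell_2$ has norm at most $r$, this prefactor is uniformly bounded.

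It then remains to bound above the logarithms of the correction products, i.e.\ sums $\sum_{k\ne n}T_k$ with
\[
    T_k=\log\frac{|\omega_{2k}-\omega_{2n}|}{\pi|k-n|}
        +\log\frac{\omega_{2k}+\omega_{2n}}{\pi(k+n)}
\]
(and a half-integer analogue for $C$). The $h$-separation gives $|\omega_{2k}-\omega_{2n}|\ge 2h|k-n|>0$, so each summand is $\log(1+x_k)+\log(1+y_k)$ with $1+x_k,1+y_k>0$, where $x_k=\tfrac{\rho_{2k}-\rho_{2n}}{\pi(k-n)}$ and $y_k=\tfrac{\rho_{2k}+\rho_{2n}}{\pi(k+n)}$. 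The elementary inequality $\log(1+t)\le t$, applicable since the arguments are positive, yields $T_k\le x_k+y_k$.

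The crux is that $\sum_{k\ne n}(x_k+y_k)$ is bounded above uniformly in $n$ even though the naive termwise estimates diverge. I would reorganize the sum by grouping the diagonal coefficient $\rho_{2n}$ and the off-diagonal coefficients $\rho_{2k}$,
\[
    x_k+y_k=\frac{\rho_{2k}}{\pi}\cdot\frac{2k}{k^2-n^2}
            -\frac{\rho_{2n}}{\pi}\cdot\frac{2n}{k^2-n^2}.
\]
The off-diagonal part is controlled by Cauchy--Schwarz, $\bigl|\sum_{k\ne n}\rho_{2k}\tfrac{2k}{k^2-n^2}\bigr|\le\|(\rho_n)\|_{\ell_2}\bigl(\sum_{k\ne n}\tfrac{4k^2}{(k^2-n^2)^2}\bigr)^{1/2}$, where the last series is bounded uniformly in $n$ (splitting into the ranges $|k-n|$ small and $|k-n|$ large). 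The diagonal part reduces to the conditionally convergent sum $\sum_{k\ne n}\tfrac1{k^2-n^2}=\tfrac{3}{4n^2}$, whose contribution $-\tfrac{3\rho_{2n}}{2\pi n}$ is bounded by $\tfrac{3r}{2\pi}$. The product for $C$ is treated identically, with $\omega_{2k-1}=\pi(k-\tfrac12)+\rho_{2k-1}$, the separation bound $|\omega_{2k-1}-\omega_{2n}|\ge h|2(k-n)-1|$, and the corresponding diagonal sum $\sum_{k\ge1}\tfrac1{(k-\tfrac12)^2-n^2}$, which vanishes by an index-reflection symmetry and is in any case uniformly bounded. Combining these estimates shows that $\log|\dot S(\omega_{2n})|$ and $\log\tfrac{|C(\omega_{2n})|}{\omega_{2n}}$ are bounded above uniformly over $\sN(h,r)$ and $n\in\bN$, as claimed. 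The main obstacle is precisely this near-diagonal conditional convergence: the individual series $\sum_k\tfrac{\rho_{2n}}{k\pm n}$ diverge, and only after combining the $(k-n)$- and $(k+n)$-factors does the dangerous diagonal contribution collapse to a convergent, uniformly bounded quantity.
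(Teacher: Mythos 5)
Your proof is correct and follows essentially the same route as the paper: both differentiate the Hadamard product, normalize each factor $\frac{\la_k-\la_n}{\pi^2k^2}$ by the unperturbed product, and neutralize the divergent diagonal $\rho_{2n}$-contribution by combining the $(k-n)$- and $(k+n)$-factors, with Cauchy--Schwarz controlling the off-diagonal $\ell_2$-part. The paper merely packages the pairing as a principal-value product over $k\in\bZ$ (setting $\omega_{-k}=-\omega_k$, $\omega_0=0$, so that $\sum_{k\ne n}(k-n)^{-1}=0$ replaces your exact evaluations $\sum_{k\ne n}(k^2-n^2)^{-1}=\tfrac{3}{4n^2}$ and its half-integer analogue) and uses the two-sided estimate $|\log(1+x)-x|\le Kx^2$, valid since $1+a_{k,n}\ge 2h/\pi$ by $h$-separation, thereby also obtaining the lower bound $K_1\le|\dot S(\omega_{2n})|$ that is invoked later in the proof of Lemma~\ref{lem:two.kk1}, whereas your one-sided $\log(1+t)\le t$ yields only the upper bound literally asserted in the statement (your argument extends to the lower bound at no extra cost, since the same separation gives $1+x_k,1+y_k\ge 2h/\pi$ and $\sum_k x_k^2+y_k^2$ is uniformly bounded).
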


\begin{proof}
By~\eqref{eq:pre.PhiPsi}, we have
\[
    \dot{S}(\sqrt{\la_n}) = -\frac{2\la_n}{\pi^2 n^2}
    \prod_{k\in\bN,\ k\ne n}\frac{\la_k - \la_n}{\pi^2k^2}.
\]
Dividing both sides by
\[
    \cos \pi n = \frac{d \sin z}{dz}\Bigr|_{z=\pi n}
                = -2 \prod_{k\in\bN,\ k\ne n}
                    \frac{\pi^2k^2-\pi^2n^2}{\pi^2k^2},
\]
we conclude that%
\begin{footnote}
{In what follows, all summations and multiplications over the index set
$\bZ$ will be taken in the principal value sense and the
symbol~$\mathrm{V.p.}$ will be omitted.}
\end{footnote}
\[
    |\dot{S}(\sqrt{\la_n})| = \frac{\la_n}{\pi^2n^2}
            \prod_{k\ne n} \frac{\la_n - \la_k}{\pi^2n^2-\pi^2k^2}
            = \prod_{k\in\bZ,\ k\ne n}
                      \frac{\omega_{2k} - \omega_{2n}}{\pi(k-n)},
\]
where we set $\omega_{-k}:=-\omega_k$ for $k\in\bN$ and
$\omega_0:=0$. Set also (recall that $\rho_k:=\omega_k-\pi k$)
\[
    a_{k,n}:= \frac{\omega_{2k} - \omega_{2n}}{\pi(k-n)} -1
            = \frac{\rho_{2k}-\rho_{2n}}{\pi (k-n)}
\]
if $k\ne n$ and $a_{n,n}:=0$; then
    \(
    |\dot{S}(\omega_{2n})| = \prod_{k\in\bZ}
        (1+ a_{k,n}).
    \)
Since the sequence $(\omega_n)$ is $h$-separated for every
$(\bla,\bmu)\in\sN(h,r)$, we have
 \(
    1+a_{k,n} \ge {2h}/\pi
 \)
for all integer $k$ and~$n$. Therefore, with
\[
    K:= \max_{x\ge -1 + 2h/\pi} \Bigl|\frac{\log(1+x)-x}{x^2}\Bigr|
    < \infty,
\]
we get the estimate
\begin{equation}\label{eq:two.log}
    \Bigl|\log \prod_{k\in\bZ}(1+a_{k,n})\Bigr|
            \le \Bigl|\sum_{k\in\bZ} a_{k,n}\Bigr|
            + K \sum_{k\in\bZ} a^2_{k,n},
\end{equation}
provided the two series converge.

Clearly,
\[
    \sum_{k\ne n} \frac{1}{k-n}  =0,
\]
and thus
\[
    \Bigl|\sum_{k\in\bZ} a_{k,n}\Bigr|
        = \Bigl|\frac1\pi\sum_{k\ne n} \frac{\rho_{2k}}{k-n}\Bigr|
        \le \frac{\sqrt 2r}{\sqrt3}
\]
by the Cauchy--Bunyakovski--Schwarz inequality (recall that
$\sum_{k\in\bZ}\rho^2_{2k} \le 2r^2$ by the definition of the set
$\sN(h,r)$ and $\sum_{k\ne n}(k-n)^{-2}=\pi^2/3$). Next, the
inequality
\[
    a_{k,n}^2 \le  \frac{2 \rho_{2k}^2}{(k-n)^2}
                   + \frac{2 \rho_{2n}^2}{(k-n)^2}
\]
for $k\ne n$ yields
\[
    \sum_{k\in\bZ} a_{k,n}^2
        \le 4r^2 \sum_{k\ne n}\frac{1}{(n-k)^2}
        = \frac{4\pi^2r^2}3.
\]
It follows from~\eqref{eq:two.log} that
\[
    \Bigl|\log \prod_{k\in\bZ}(1+a_{k,n})\Bigr|
        \le (\sqrt6 r + 4K \pi^2r^2)/3,
\]
where the constant~$K$ only depends on $h$.

Similarly, we find that
\[
    \Bigl|\frac{C(\sqrt{\la_n})}{\sqrt{\la_n}}\Bigr|
        = \Bigl|\prod_{k=1}^\infty
        \frac{\mu_k-\la_n}{\pi^2(k-\tfrac12)^2}\Bigr|
        = \prod_{k\in\bZ}
        \frac{\omega_{2k-1}-\omega_{2n}}{\pi(k-\tfrac12) - \pi n}
\]
and then mimic the above reasoning to establish the other uniform bound.
The lemma is proved.
\end{proof}

As explained at the beginning of this Section, the above statements complete the proofs of Theorems~\ref{thm:two.norm} and \ref{thm:pre.main}.

\medskip
\textbf{Acknowledgements.} {The author thanks A.~A.~Shkalikov for
suggesting the problem and M.~Marletta, Ya.~V.~Mykytyuk and R.~Weikard for stimulating discussions. The research was partially supported by the Alexander von Humboldt Foundation and was partially carried out during the visit to the
Institute for Applied Mathematics of Bonn University, whose hospitality
is warmly acknowledged.}

\appendix



\section{Riesz bases of sines and cosines}\label{sec:RB}

We recall (see, e.g., \cite[Ch.~6]{GK1} and \cite[Ch.~4]{Young})
that a sequence~$(f_n)_{n\in\bN}$ in a separable Hilbert space~$H$
is called a Riesz basis of~$H$ if it is a homeomorphic image of an orthonormal basis of~$H$. Then there are $M>0$ (the upper bound)
and $m>0$ (the lower bound) such that, for every $f\in H$, we have
\[
    m\|f\|^2 \le \sum \bigl|(f,f_n)\bigr|^2 \le M \|f\|^2.
\]
Riesz bases of $L_2(0,1)$ that are composed of exponential
functions, or sines, or cosines, have been extensively studied in
the literature starting from the early 1930-ies, see the books by
Paley and Wiener~\cite{PW} and Avdonin and Ivanov~\cite{AI} for
particulars and historical comments. For instance, the famous Kadets
$\tfrac14$-theorem~\cite{Ka} implies that for every $L<\tfrac14$ there exist
positive constants~$m$ and $M$ such that as long as a sequence
$(\omega_n)_{n\in\bZ}$ of real numbers satisfies the condition
\begin{equation}\label{eq:RB.Kad}
    \sup_{n\in\bZ} |\omega_n-\pi n|< \pi L,
\end{equation}
then the sequence $(\re^{i\omega_n x})_{n\in\bZ}$ of exponentials
forms a Riesz basis of~$L_2(-1,1)$ of upper bound~$M$ and lower
bound~$m$. Analogous results for families of sines and cosines were
established in~\cite{HV}.

In this paper, we need generalizations of these results to sequences
that may not satisfy condition~\eqref{eq:RB.Kad}. Recall that
$\sL^0(h,r)$, with $h\in(0,\pi )$ and $r>0$, stands for the set of
all strictly increasing sequences $\bla=(\omega^2_n)_{n\in\bN}$ of
positive numbers satisfying the conditions $\omega_1>h$,
$\omega_{n+1}-\omega_n\ge h$, $n\in\bN$, and
 \(
    \sum |\omega_n - \pi n|^2 \le r^2.
 \)
For every $\bla\in \sL^0(h,r)$, we denote by $\sS_{\bla}$ and
$\sC_{\bla}$ the sequences of functions $(\sin\omega_nx)_{n\in\bN}$ and
$(\cos\omega_nx)_{n\in\bZ_+}$ respectively, with $\omega_0:=0$. We also
set $\omega_{-n}:= -\omega_n$ and denote by~$\sE_{\bla}$ the sequence
of functions~$(\re^{\ri \omega_n(1-2x)})_{n\in\bZ}$.  The following
statement can be derived from the results of~\cite{Hriesz}:

\begin{theorem}\label{thm:RB.unif}
For every $h\in(0,\pi)$ and $r>0$ there exist positive numbers $M$ and
$m$ such that for every $\bla\in \sL^0(h,r)$ the sequences
$\sS_{\bla}$, $\sC_{\bla}$, and $\sE_{\bla}$ are Riesz bases of
$L_2(0,1)$ of upper bound~$M$ and lower bound~$m$.
\end{theorem}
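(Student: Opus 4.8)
The plan is to reduce the whole statement to one assertion about the exponential system and then to realize that system as the image of an orthonormal basis under an operator of the form ``identity plus Hilbert--Schmidt,'' whose uniform bounded invertibility is the only nontrivial point.

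First I would pass to exponentials. Via the substitution $u=1-2x$ the system $\sE_\bla$ on $L_2(0,1)$ is unitarily equivalent, up to the harmless factor $1/\sqrt2$, to $(\re^{\ri\omega_n u})_{n\in\bZ}$ on $L_2(-1,1)$, for which $(\tfrac1{\sqrt2}\re^{\ri\pi n u})_{n\in\bZ}$ is an orthonormal basis. Since $\omega_{-n}=-\omega_n$, the conjugation symmetry $\re^{\ri\omega_{-n}u}=\overline{\re^{\ri\omega_n u}}$ lets me pass, through a fixed invertible $2\times2$ block transform pairing the indices $\pm n$, from the exponential system to $(\cos\omega_n u)_{n\ge0}\cup(\sin\omega_n u)_{n\ge1}$; restricting to the even and odd subspaces of $L_2(-1,1)$ (each isometric, up to a constant factor, to $L_2(0,1)$) then yields $\sC_\bla$ and $\sS_\bla$ with Riesz bounds comparable to those of $\sE_\bla$, cf.~\cite{HV}. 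Hence it suffices to treat the exponentials, with bounds uniform over $\sL^0(h,r)$.

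Next I would record the uniform quadratic closeness to the orthonormal basis. From $1-\cos\theta\le\tfrac12\theta^2$ one gets $\|\re^{\ri\omega_n u}-\re^{\ri\pi n u}\|_{L_2(-1,1)}^2\le\tfrac23(\omega_n-\pi n)^2$, and summing over $n\in\bZ$ (using $\omega_{-n}=-\omega_n$, $\omega_0=0$) bounds the total by $\tfrac43 r^2$. Consequently the operator $D$ defined on the orthonormal basis by $D(\tfrac1{\sqrt2}\re^{\ri\pi n u})=\tfrac1{\sqrt2}(\re^{\ri\omega_n u}-\re^{\ri\pi n u})$ is Hilbert--Schmidt with $\|D\|_{\fS_2}\le\sqrt{2/3}\,r$ uniformly, and $\sE_\bla=(I+D)$ applied to the orthonormal basis. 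The upper Riesz bound is then immediate, $M\le\|I+D\|^2\le(1+\sqrt{2/3}\,r)^2$, and in particular no separation is needed for it. As $D$ is compact, $I+D$ is Fredholm of index zero, so it is boundedly invertible precisely when it is injective, i.e.\ precisely when the exponential system admits no nontrivial $\ell_2$ linear dependence; the lower bound is then $m=\|(I+D)^{-1}\|^{-2}$.

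The main obstacle is to bound $\|(I+D)^{-1}\|$ uniformly over $\sL^0(h,r)$: boundedness of $\|D\|_{\fS_2}$ alone does not prevent $I+D$ from approaching non-invertibility. Here I would exploit the separation hypothesis through a finite good/bad splitting. Fixing $\delta=\tfrac18$, the $\ell_2$-bound forces the set $B:=\{n:|\omega_n-\pi n|\ge\pi\delta\}$ to have cardinality at most $r^2/(\pi\delta)^2=:N$, uniformly in $\bla$; replacing $\omega_n$ by $\pi n$ for $n\in B$ produces a sequence $\tilde\omega$ with $\sup_n|\tilde\omega_n-\pi n|<\pi\delta<\pi/4$, whence by the Kadets theorem $(\re^{\ri\tilde\omega_n u})=(I+\tilde D)$(orthonormal basis) is a Riesz basis with $\|(I+\tilde D)^{-1}\|$ bounded by a constant depending only on $\delta$ \cite{Ka}. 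Writing $I+D=(I+\tilde D)\bigl(I+(I+\tilde D)^{-1}R\bigr)$ with $R:=D-\tilde D$ of rank at most $2N$, uniform invertibility reduces to a finite-dimensional, at most $2N\times2N$, nondegeneracy condition. That this finite block stays uniformly invertible is exactly where the $h$-separation enters and where the estimates of \cite{Hriesz} are used: separation keeps each ``bad'' exponential $\re^{\ri\omega_n u}$, $n\in B$, uniformly bounded away from the closed span of the remaining ones, bounding below the smallest singular value of the block uniformly in $\bla$. Combining the uniform upper and lower bounds for the exponentials and transferring them through the reduction of the second paragraph would complete the proof.
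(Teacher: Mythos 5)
Your scaffolding is sound and correctly quantified. The unitary reduction of $\sS_{\bla}$ and $\sC_{\bla}$ to the exponential system via $u=1-2x$ and the even/odd splitting of $L_2(-1,1)$ works (a Riesz basis that is a union of a family in the even subspace and a family in the odd subspace restricts to Riesz bases of each, with the same bounds); the estimate $\|\re^{\ri\omega_n u}-\re^{\ri\pi nu}\|^2_{L_2(-1,1)}\le\tfrac23(\omega_n-\pi n)^2$ and hence $\|D\|_{\fS_2}\le\sqrt{2/3}\,r$ is correct, giving the uniform upper bound with no separation; the good/bad splitting with $|B|\le r^2/(\pi\delta)^2=:N$ depending only on $r$, the appeal to the uniform form of the Kadets theorem \cite{Ka} for the modified sequence, and the factorization $I+D=(I+\tilde D)\bigl(I+(I+\tilde D)^{-1}R\bigr)$ with $\operatorname{rank}R\le 2N$ are all legitimate; and since $N$ is uniform in $\bla$, the $\sqrt{2N}$ losses in passing between the maximum and the $\ell_2$ norm of the bad coefficients are harmless. (One small step you gesture at but should spell out: to force some bad coefficient $|c_j|$ to be bounded below when $\|\sum c_nf_n\|$ is small, you must first invoke the uniform \emph{lower} Riesz bound of the Kadets system $(\re^{\ri\tilde\omega_nu})$.)

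The genuine gap is at the single point where all the analytic difficulty lives: the claim that $h$-separation keeps each bad exponential $\re^{\ri\omega_ju}$, $j\in B$, at a \emph{uniform} positive distance from the closed span of the remaining $\re^{\ri\omega_nu}$. You assert this and attribute it to ``the estimates of \cite{Hriesz}'', but that is circular as written: the main result of \cite{Hriesz} \emph{is} Theorem~\ref{thm:RB.unif} (indeed, the paper under review gives no proof at all --- it states that the theorem ``can be derived from the results of~\cite{Hriesz}''), so invoking its estimates at the crux reduces the theorem to itself. The distance claim is precisely uniform minimality of the perturbed system, equivalently a uniform bound on the biorthogonal functionals, and proving it requires real work: one computes the biorthogonal elements through the generating function $G(z)=z\prod_{n\ge1}(1-z^2/\omega_n^2)$ and must bound $|\dot G(\omega_j)|$ (and the quotients $G(z)/(z-\omega_j)$) below and above, uniformly over all $h$-separated sequences with $\sum|\omega_n-\pi n|^2\le r^2$. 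Estimates of exactly this kind are what the paper carries out for $\dot S$ and $C$ in Lemma~\ref{lem:two.PhiPsi}, via $\log\prod(1+a_{k,n})$ with $a_{k,n}=(\rho_{2k}-\rho_{2n})/\pi(k-n)$, and that is the content your proposal is missing: without supplying such canonical-product bounds (or an equivalent argument), the finite block's smallest singular value is not controlled, and the proof does not close.
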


\begin{remark}\label{rem:RB-cos-mu}
For $h\in(0,\pi)$ and $r>0$, we denote by $\sM^0(h,r)$ the set of
increasing sequences~$\bmu:=(\mu_n)_{n=1}^\infty$ with the following
properties:
\begin{itemize}
\item [(M1)] $\mu_1\ge1$ and, for all $n\in\bN$, $\sqrt{\mu_{n+1}} - \sqrt{\mu_n}\ge h$;
\item [(M2)] the numbers~$\rho_{n}:=\sqrt{\mu_n}- \pi (n-\tfrac12)$
form a sequence in $\ell_2$ of norm at most~$r$.
\end{itemize}
Then an analogue of the above theorem holds true for the family of
sequences~$\sC_{\bmu}=(\cos\sqrt{\mu_n}x)_{n\in\bN}$, with $\bmu$
running through the set~$\sM^0(h,r)$; see~\cite{Hriesz}.
\end{remark}


\section{Sobolev spaces $W^s_2(0,1)$ and some of their properties}\label{sec:sobolev}

We recall here some facts about the Sobolev spaces $W^s_2(0,1)$ and
Fourier coefficients of functions from these spaces. For details, we
refer the reader to~\cite[Ch.~1]{LM}.

\subsection{The definition}
By definition, the space $W^0_2(0,1)$ coincides with $L_2(0,1)$ and
the norm~$\|\cdot\|_0$ in $W^0_2(0,1)$ is just the $L_2(0,1)$-norm.
For a natural~$l$, the Sobolev space $W^l_2(0,1)$ consists of all
functions $f$ in $L_2(0,1)$, whose distributional derivatives
$f^{(k)}$ for $k=1,\dots,l$ also fall into~$L_2(0,1)$. Being endowed
with the norm
\begin{equation}\label{eq:B.sob-norm}
    \|f\|_l := \Bigl(\sum_{k=0}^l \|f^{(k)}\|_0^2\Bigr)^{1/2},
\end{equation}
the space $W_2^l(0,1)$ becomes a Hilbert space.

The intermediate spaces $W^s_2(0,1)$ for arbitrary positive $s$ can
be constructed by interpolation~\cite[Ch.~1.2.1]{LM}. We shall need
such spaces only for $s\in[0,2]$ and thus interpolate between
$W^2_2(0,1)$ and $W^0_2(0,1)$ to get them, i.e.,
\[
    W^{2t}_2(0,1) :=
        [W^2_2(0,1),W^0_2(0,1)]_{1-t}, \quad t\in(0,1).
\]
The induced norms $\|\cdot\|_s$ (for $s=1$ the
norm~\eqref{eq:B.sob-norm} is equivalent to that defined by
interpolation) are nondecreasing with $s\in[0,2]$, i.e., if $s<r$
and $f\in W^r_2(0,1)$, then $\|f\|_s\le\|f\|_r$. Since by
construction the spaces $W_2^s(0,1)$ form an interpolation scale,
the general interpolation theorem~\cite[Theorem~1.5.1]{LM} implies
the following interpolation property for operators in these spaces.

\begin{proposition}\label{pro:interp}
Assume that an operator $T$ acts boundedly in $W^s_2(0,1)$ and
$W^r_2(0,1)$, $s<r$. Then $T$ is a bounded operator in
$W^{ts+(1-t)r}_2(0,1)$ for every $t\in[0,1]$; moreover,
$\|T\|_{ts+(1-t)r}\le \|T\|_s^t \|T\|_r^{1-t}$.
\end{proposition}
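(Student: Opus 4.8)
The plan is to deduce the statement directly from the general interpolation theorem for linear operators applied to the couple $\bigl(W_2^r(0,1),W_2^s(0,1)\bigr)$, once the intermediate space $W_2^{ts+(1-t)r}(0,1)$ has been identified as a complex interpolation space of this couple. The first step is to recall, from the construction above, that every member of the scale with exponent $\sigma\in[0,2]$ is the complex interpolation space
\[
    W_2^\sigma(0,1) = [W_2^2(0,1),W_2^0(0,1)]_{1-\sigma/2},
\]
so that $W_2^r$ and $W_2^s$ correspond to the parameters $\beta_0 = 1-r/2$ and $\beta_1 = 1-s/2$ of the single fixed couple $\bigl(W_2^2(0,1),W_2^0(0,1)\bigr)$.

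The key step is then the \emph{reiteration} (stability) property of the complex method \cite{BL,LM}, which says that interpolating between two members of the scale returns another member of the same scale. Concretely, for $t\in[0,1]$,
\[
    [W_2^r(0,1),W_2^s(0,1)]_t
        = [W_2^2(0,1),W_2^0(0,1)]_{(1-t)\beta_0 + t\beta_1},
\]
and since $(1-t)\beta_0 + t\beta_1 = 1 - \tfrac12\bigl[(1-t)r + ts\bigr]$, the right-hand side is precisely $W_2^{ts+(1-t)r}(0,1)$. Thus $W_2^{ts+(1-t)r}(0,1) = [W_2^r(0,1),W_2^s(0,1)]_t$, where the $t=0$ endpoint of the couple is $W_2^r$ and the $t=1$ endpoint is $W_2^s$.

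With this identification in hand, I would apply the interpolation theorem for linear operators \cite[Theorem~1.5.1]{LM} to $T$, regarded as a bounded operator on each endpoint of the couple $\bigl(W_2^r,W_2^s\bigr)$ used simultaneously as domain and target. Since the relevant norms of $T$ are $\|T\|_r$ on the $t=0$ end and $\|T\|_s$ on the $t=1$ end, the theorem yields at once both the boundedness of $T$ on $W_2^{ts+(1-t)r}(0,1)$ and the multiplicative bound $\|T\|_{ts+(1-t)r}\le \|T\|_s^{t}\|T\|_r^{1-t}$, which is exactly the asserted estimate.

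The one point requiring care, and the main obstacle to obtaining the \emph{exact} constant $1$, is that the reiteration identity must hold \emph{isometrically} rather than merely up to equivalence of norms; otherwise an extra equivalence constant would spoil the clean multiplicative estimate. This is why one works throughout with the interpolation norms $\|\cdot\|_\sigma$ fixed above: the family $W_2^\sigma(0,1)$ is a Hilbert scale, on which the complex method satisfies the isometric reiteration property, so the displayed identification of $W_2^{ts+(1-t)r}(0,1)$ holds with equal norms and no spurious factor appears. At the integer exponent $\sigma=1$, where \eqref{eq:B.sob-norm} is only equivalent to the interpolation norm, the estimate is read off with respect to the interpolation norm, in accordance with the convention adopted for $\|\cdot\|_s$.
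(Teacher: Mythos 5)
Your proposal is correct and follows essentially the same route as the paper, which derives the proposition from the fact that the spaces $W_2^\sigma(0,1)=[W_2^2(0,1),W_2^0(0,1)]_{1-\sigma/2}$ form an interpolation scale by construction, combined with the general interpolation theorem \cite[Theorem~1.5.1]{LM}. Your write-up merely makes explicit the two ingredients the paper leaves implicit --- the (exact, since this is a Hilbert couple) reiteration identity $[W_2^r,W_2^s]_t=W_2^{ts+(1-t)r}$ and the multiplicative operator bound of the complex method --- and your closing remark about reading the estimate in the interpolation norms, where \eqref{eq:B.sob-norm} is only equivalent to them, is the right convention.
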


Proposition~\ref{pro:interp} yields boundedness in every
$W^s_2(0,1)$, $s\in [0,2]$, of the reflection operator $R$ given by
$Rf(x)=f(1-x)$ and the operator~$M$ of multiplication by $\ri x$,
$Mf(x):=\ri xf(x)$.


\subsection{Fourier transform}\label{ssec:B.FT}
For an arbitrary $f\in L_2(0,1)$ we denote by $\hat f:\,\bZ \to \bC$
its discrete Fourier transform, viz.
\[
    \hat f(n):= \int_0^1 f(x) \re^{-2\pi i nx}\,dx.
\]
The Fourier transform is a unitary mapping between~$L_2(0,1)$ and
$\ell_2(\bZ)$. We set
\[
    \tilde\ell_2^s(\bZ) := \begin{cases}
        \ell_2^s(\bZ)    & \text{if} \qquad s<\tfrac12,\\
        \ell_2^s(\bZ) \dotplus \ls\{{\mathbf e}^{(1)}\}
                    & \text{if} \qquad \tfrac12\le s<\tfrac32,\\
        \ell_2^s(\bZ) \dotplus \ls\{{\mathbf e}^{(1)},\mathbf{e}^{(2)}\},
                    & \text{if} \qquad \tfrac32\le s\le 2,
        \end{cases}
\]
where $\mathbf{e}^{(j)}$, $j=1,2$, denotes the sequence
$(e^{(j)}_n)_{n\in\bZ}$ with $e^{(j)}_n := n^{-j}$ for $n\ne 0$ and
$e^{(j)}_0:=0$. The norm $\|\cdot\|_s$ in $\hat \ell_2^s(\bZ)$ is defined as
follows: given an element $\bx:=(x_n)$ of~$\ell_2^s(\bZ)$ and
complex numbers $a_1$ and $a_2$ (with $a_1=a_2=0$ if $s<\tfrac12$ and $a_2=0$ if $\tfrac12\le s<\tfrac32$), we set
\[
    \|\bx + a_1{\mathbf e}^{(1)} + a_2{\mathbf e}^{(2)}\|^2_s:=
        \sum_{n\in\bZ}(1+n^2)^s |x_n|^2 + |a_1|^2 + |a_2|^2.
\]
It is straightforward to verify that the Fourier transform of every
function in $W_2^2(0,1)$ forms an element of $\tilde \ell_2^2(\bZ)$ and that
$\|\hat f\|_2$ introduces a norm on $W_2^2(0,1)$ that is equivalent to
that of~\eqref{eq:B.sob-norm}. Since the spaces $\tilde \ell_2^s(\bZ)$,
$s\in[0,2]$, form an interpolation scale, the interpolation theorem
implies that, for every $s\in[0,2]$, the Fourier transform is a
homeomorphism between the spaces $W_2^s(0,1)$ and $\tilde \ell_2^s(\bZ)$.

\subsection{Convolution}
As usual, $\ast$ denotes the convolution operation on~$(0,1)$, viz.
\[
    (f\ast g)(x) := \int_0^1 f(x-t) g(t)\,dt;
\]
here we extend a function $f$ onto the interval~$(-1,0)$ by
periodicity, i.e., by setting $f(x):=f(x+1)$ for $x\in (-1,0)$. It
is well known that convolution accumulates smoothness; the precise
meaning of this statement is as follows.

\begin{proposition}\label{pro:B.conv}
Assume that $s,t\in[0,1]$ and that $f\in W^s_2(0,1)$ and $g\in
W^t_2(0,1)$ are arbitrary. Then the function $h:= f\ast g$ belongs
to $W^{s+t}_2(0,1)$ and, moreover, there exists $C>0$ independent of
$f$ and $g$ such that $\|h\|_{s+t}\le C \|f\|_s\|g\|_t$.
\end{proposition}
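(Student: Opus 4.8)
The plan is to pass to the Fourier side and use the fact, established in Subsection~\ref{ssec:B.FT}, that the discrete Fourier transform is a homeomorphism between $W_2^s(0,1)$ and $\tilde\ell_2^s(\bZ)$ for every $s\in[0,2]$, so that the norms $\|f\|_s$ and $\|\hat f\|_s$ are equivalent. Since the periodic Fourier coefficients turn periodic convolution into pointwise multiplication, $\widehat{f\ast g}(n)=\hat f(n)\hat g(n)$ for all $n\in\bZ$, the claim reduces to the purely sequence-theoretic statement that entrywise multiplication is bounded from $\tilde\ell_2^s(\bZ)\times\tilde\ell_2^t(\bZ)$ into $\tilde\ell_2^{s+t}(\bZ)$ whenever $s,t\in[0,1]$; the bound $\|f\ast g\|_{s+t}\le C\|f\|_s\|g\|_t$ then follows upon transporting back through the homeomorphisms.

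Since $s,t\le1<\tfrac32$, only the first correction vector can occur, so I would write $\hat f=a\,\mathbf{e}^{(1)}+\bx$ and $\hat g=b\,\mathbf{e}^{(1)}+\by$ with $\bx\in\ell_2^s(\bZ)$, $\by\in\ell_2^t(\bZ)$ (and $a=0$ if $s<\tfrac12$, $b=0$ if $t<\tfrac12$), where by the definition of the $\tilde\ell_2^s(\bZ)$-norm we have $|a|,\|\bx\|_s\le\|\hat f\|_s$ and $|b|,\|\by\|_t\le\|\hat g\|_t$. Expanding,
\[
    \hat f\cdot\hat g
        = ab\,(\mathbf{e}^{(1)})^{2}
          + a\,(\mathbf{e}^{(1)}\cdot\by)
          + b\,(\bx\cdot\mathbf{e}^{(1)})
          + \bx\cdot\by ,
\]
where $\cdot$ denotes the entrywise product, I would treat the four terms separately. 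The heart of the matter is the last term: setting $u_n:=(1+n^2)^{s/2}|x_n|$ and $v_n:=(1+n^2)^{t/2}|y_n|$, one has $u,v\in\ell_2$ with $\|u\|_{\ell_2}=\|\bx\|_s$, $\|v\|_{\ell_2}=\|\by\|_t$, and $(1+n^2)^{(s+t)/2}|x_ny_n|=u_nv_n$, so the embedding $\ell_2(\bZ)\hookrightarrow\ell_4(\bZ)$ together with the Cauchy--Schwarz inequality gives
\[
    \sum_{n\in\bZ}(u_nv_n)^2
        \le \Bigl(\sum_{n\in\bZ}u_n^4\Bigr)^{1/2}
            \Bigl(\sum_{n\in\bZ}v_n^4\Bigr)^{1/2}
        \le \|u\|_{\ell_2}^2\,\|v\|_{\ell_2}^2 ,
\]
whence $\bx\cdot\by\in\ell_2^{s+t}(\bZ)$ with $\|\bx\cdot\by\|_{s+t}\le\|\bx\|_s\|\by\|_t$.

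For the remaining terms I would use the explicit form of the correction vectors. The mixed term $\mathbf{e}^{(1)}\cdot\by$ has entries $y_n/n$, and since $s\le1$ the weight $(1+n^2)^{s/2}/|n|\sim|n|^{s-1}$ is bounded on $\bZ\setminus\{0\}$; hence $(1+n^2)^{(s+t)/2}|y_n/n|\le C(1+n^2)^{t/2}|y_n|$ shows $\mathbf{e}^{(1)}\cdot\by\in\ell_2^{s+t}(\bZ)$ with norm at most $C\|\by\|_t$, and likewise $\bx\cdot\mathbf{e}^{(1)}\in\ell_2^{s+t}(\bZ)$. Finally $(\mathbf{e}^{(1)})^{2}=\mathbf{e}^{(2)}$, which lies in $\ell_2^{s+t}(\bZ)$ when $s+t<\tfrac32$ and is itself an admissible correction vector of $\tilde\ell_2^{s+t}(\bZ)$ when $s+t\ge\tfrac32$; in either case it belongs to $\tilde\ell_2^{s+t}(\bZ)$ with controlled norm. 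Collecting the four estimates and bounding the scalar factors by $|a|\le\|\hat f\|_s$, $|b|\le\|\hat g\|_t$ yields $\hat f\cdot\hat g\in\tilde\ell_2^{s+t}(\bZ)$ with the desired product bound.

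The only genuine obstacle here is the bookkeeping of the non-$\ell_2^s$ correction vectors, which appear precisely because functions in $W_2^s(0,1)$ need not be periodic and hence may have Fourier coefficients decaying only like $1/n$; once these are isolated via the direct-sum structure of $\tilde\ell_2^s(\bZ)$, each piece is handled by an elementary weighted estimate, with the single substantive analytic input being the embedding $\ell_2\hookrightarrow\ell_4$ used for the principal term.
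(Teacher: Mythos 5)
Your proof is correct, but it follows a genuinely different route from the paper's. The paper's own argument (given only as a sketch in Appendix~\ref{sec:sobolev}) handles the extreme cases $s,t\in\{0,1\}$ directly from the representation $(f\ast g)(x)=\int_0^x f(x-t)g(t)\,dt+\int_x^1 f(x-t+1)g(t)\,dt$ --- differentiation yields identities like $(f\ast g)'=f'\ast g+[f(0)-f(1)]g$, from which the endpoint bounds follow --- and then invokes interpolation to reach intermediate $s,t$; note that this requires interpolating a \emph{bilinear} map, a step more delicate than the linear Proposition~\ref{pro:interp} and not spelled out in the paper. You instead transport everything to the sequence side via the homeomorphism between $W_2^s(0,1)$ and $\tilde\ell_2^s(\bZ)$ from Subsection~\ref{ssec:B.FT} and prove outright that entrywise multiplication is bounded from $\tilde\ell_2^s(\bZ)\times\tilde\ell_2^t(\bZ)$ into $\tilde\ell_2^{s+t}(\bZ)$. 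All four pieces of your decomposition check out: the $\ell_2\hookrightarrow\ell_4$ embedding plus Cauchy--Schwarz handles the principal term $\bx\cdot\by$ with constant $1$; the weight bound $(1+n^2)^{s/2}/|n|\le 2^{s/2}$ for $|n|\ge1$ and $s\le1$ disposes of the mixed terms; and $(\mathbf{e}^{(1)})^2=\mathbf{e}^{(2)}$ lies in $\ell_2^{s+t}(\bZ)$ precisely when $s+t<\tfrac32$ (its norm there blows up as $s+t\uparrow\tfrac32$, which is harmless since the proposition's constant may depend on $s$ and $t$) and is an admissible correction vector when $s+t\ge\tfrac32$. The identification of $f\ast g$ with the resulting $W_2^{s+t}$-function is legitimate via $\widehat{f\ast g}(n)=\hat f(n)\hat g(n)$ and injectivity of the Fourier transform on $L_2$. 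What your approach buys: it treats all $s,t\in[0,1]$ simultaneously, with explicit constants, and avoids bilinear interpolation entirely; what it costs is that interpolation is not really eliminated but hidden inside the homeomorphism $W_2^s(0,1)\cong\tilde\ell_2^s(\bZ)$ --- yet since the paper establishes that fact before the proposition, citing it is fair, and your proof stands as a more self-contained alternative.
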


Proof of this proposition is based on interpolation between the
extreme cases $s,t=0,1$, which are handled with directly using the
representation
\[
    (f \ast g) (x) = \int_0^x f(x-t) g(t)\,dt
        + \int_x^1 f(x-t+1)g(t)\,dt.
\]
We also recall the relation~$\widehat{f\ast g}(n) = \hat f(n) \hat
g(n)$.


\section{Some auxiliary results}\label{sec:aux}

\begin{lemma}\label{lem:C.Phi}
For $f$ and $g$ in~$L_2(0,1)$, set
\[
    \Phi(f,g):= \mathrm{V.p.}\sum_{n\in\bZ}
        \hat g(n) [\exp\{\hat f(n)\ri x\}-1] \re^{2\pi\ri nx};
\]
then for every $s,t\in[0,1]$ the mapping
\[
    \Phi\,:\, W_2^s(0,1)\times W_2^t(0,1) \to W_2^{s+t}(0,1)
\]
is analytic and Lipschitz continuous on bounded subsets.
\end{lemma}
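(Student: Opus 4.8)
The plan is to expand the exponential in a power series and recognise $\Phi(f,g)$ as a convergent series of convolution operators. Writing $\exp\{\hat f(n)\ri x\}-1=\sum_{k=1}^\infty (\ri x)^k\hat f(n)^k/k!$ and interchanging the order of summation, one obtains the formal identity
\[
    \Phi(f,g)(x)=\sum_{k=1}^\infty \frac{(\ri x)^k}{k!}\,\bigl(f^{<k>}\ast g\bigr)(x),
\]
where we have used that $\hat f(n)^k\hat g(n)$ is the $n$-th Fourier coefficient of $f^{<k>}\ast g$. The whole task then reduces to estimating the $W_2^{s+t}(0,1)$-norm of the $k$-th term and showing that the resulting series converges absolutely, whence the principal-value summation coincides with the ordinary one.

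First I would estimate $\|f^{<k>}\ast g\|_{s+t}$. Writing $f^{<k>}\ast g=f\ast(f^{<k-1>}\ast g)$ and applying Proposition~\ref{pro:B.conv} to the two factors $f\in W_2^s(0,1)$ and $f^{<k-1>}\ast g\in W_2^t(0,1)$ gives $\|f^{<k>}\ast g\|_{s+t}\le C\,\|f\|_s\,\|f^{<k-1>}\ast g\|_t$. To control the second factor I would use that convolution by an $L_1$-function $u$ is bounded on $W_2^t(0,1)$: this holds with norm $\|u\|_{L_1}$ for $t=0$ (Young's inequality) and for $t=1$ (since $(u\ast g)'=u\ast g'$), and hence for all $t\in[0,1]$ by Proposition~\ref{pro:interp}. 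Taking $u=f^{<k-1>}$ and using $\|f^{<k-1>}\|_{L_1}\le\|f\|_{L_1}^{k-1}\le\|f\|_s^{k-1}$ yields $\|f^{<k-1>}\ast g\|_t\le\|f\|_s^{k-1}\|g\|_t$, so that $\|f^{<k>}\ast g\|_{s+t}\le C\|f\|_s^{k}\|g\|_t$.

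It remains to absorb the factor $x^k$. Since $s+t\le2$, multiplication by a $C^2$-function is bounded on $W_2^{s+t}(0,1)$ (again by interpolation from the integer cases $0$ and $2$), and the $C^2$-norm of $x^k$ on $[0,1]$ is $O(k^2)$; consequently $\|x^k(f^{<k>}\ast g)\|_{s+t}\le Ck^2\|f\|_s^k\|g\|_t$. Summing,
\[
    \|\Phi(f,g)\|_{s+t}\le C\|g\|_t\sum_{k=1}^\infty\frac{k^2\|f\|_s^k}{k!}<\infty,
\]
which proves that $\Phi$ maps $W_2^s(0,1)\times W_2^t(0,1)$ into $W_2^{s+t}(0,1)$ with absolutely convergent series. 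The $k$-th term is a continuous $(k+1)$-linear map evaluated at $(f,\dots,f,g)$, i.e.\ a homogeneous polynomial of degree $k$ in $f$ and degree $1$ in $g$; as the series of these polynomials converges uniformly on bounded sets, $\Phi$ is analytic in the sense of~\cite{Di}. For the Lipschitz bound on bounded subsets I would split $\Phi(f_1,g_1)-\Phi(f_2,g_2)$ into $\Phi(f_1,g_1-g_2)$ (handled by linearity in $g$) and $\Phi(f_1,g_2)-\Phi(f_2,g_2)$, expanding in the latter $f_1^{<k>}-f_2^{<k>}=\sum_{j=0}^{k-1}f_1^{<j>}\ast(f_1-f_2)\ast f_2^{<k-1-j>}$; each of the $k$ summands carries exactly one factor $f_1-f_2$ and is estimated as above, the extra factor $k$ being harmless after division by $k!$.

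The main obstacle I anticipate is the uniform control of the $k$-dependence: one must ensure that both the convolution smoothing and the multiplication by $x^k$ contribute only polynomially in $k$, so that the factorial dominates. The subtle point is that the full smoothness $s+t$ (up to $2$) cannot be obtained by iterating Proposition~\ref{pro:B.conv} (which raises smoothness by at most $1$ per factor); the remedy is to gain the exponent $s$ from a single convolution with $f$ and to treat the remaining $k-1$ convolutions merely as bounded $L_1$-convolution operators that preserve $W_2^t(0,1)$ without increasing the required smoothness.
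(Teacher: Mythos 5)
Your proposal is correct and follows essentially the same route as the paper's proof: Taylor-expanding the exponential and interchanging sums to write $\Phi(f,g)=\sum_{k\ge1}(\ri x)^k\,(f^{<k>}\ast g)/k!$, gaining the smoothness $s+t$ from a single application of Proposition~\ref{pro:B.conv} while treating the remaining convolutions as operators bounded on $W_2^t(0,1)$, and obtaining Lipschitz continuity from linearity in $g$ combined with the telescoping estimate $\|f_1^{<k>}-f_2^{<k>}\|_s\le kC^k\|f_1-f_2\|_s(\|f_1\|_s+\|f_2\|_s)^{k-1}$, exactly as in the paper. The only cosmetic differences are that the paper controls the factor $(\ri x)^k$ by the powers $\|M\|_{s+t}^k$ of the fixed multiplication operator $M$ rather than by your $O(k^2)$ bound on the $C^2$-norm of $x^k$, and it uses Proposition~\ref{pro:B.conv} with one index equal to $0$ where you invoke Young's inequality plus interpolation---both devices produce at most exponential growth in $k$, absorbed by $k!$ in the same way.
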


\begin{proof}
If $f$ and $g$ are in $L_2(0,1)$, then the series for $\Phi$ converges
absolutely, and thus $\Phi(f,g)$ is a continuous function. Take now
$f\in W_2^s(0,1)$ and $g\in W_2^t(0,1)$. Developing $\exp\{\hat f(n)
\ri x\}$ into the Taylor series and changing the summation order (which
is allowed since the resulting double series converges absolutely), we
get
\[
    \Phi(f,g) = \sum_{k=1}^\infty\frac{(\ri x)^k}{k!}
                \mathrm{V.p.}
                \sum_{n\in\bZ} \hat g(n) \hat f^k(n)\re^{2\pi \ri nx}
              = \sum_{k=1}^\infty \frac{(\ri x)^k}{k!}\, h_k(x),
\]
where $h_k:= g\ast f^{<k>}$, $f^{<1>}:=f$, and $f^{<k>}$ for $k\ge2$ is
the~$k$-fold convolution of~$f$ with itself. By
Proposition~\ref{pro:B.conv}, the functions $h_k$ belong to
$W_2^{s+t}(0,1)$, and
\[
    \|h_k\|_{s+t} \le C^{k+1}\|f\|^k_s\|g\|_t
\]
with the constant $C$ of that proposition. Also, the operator~$M$ of
multiplication by~$\ri x$ is continuous in every $W_2^r(0,1)$, $r\in
[0,2]$; denoting by~$\|M\|_r$ the norm of the operator~$M$
in~$W_2^r(0,1)$, we conclude that $\Phi(f,g)$ belongs to
$W_2^{s+t}(0,1)$ and
\[
    \|\Phi(f,g)\|_{s+t}\le C \|g\|_t
        \bigl( \exp\{C\|M\|_{s+t}\|f\|_s\}-1\bigr).
\]
Moreover, every summand $(\ri x)^k h_k /k!$ depends analytically on $f$ and $g$; since the series converges absolutely in~$W_2^{s+t}(0,1)$, analyticity of~$\Phi$ follows.

By similar arguments we also find that
\[
    \Phi(f_1,g) - \Phi(f_2,g)
        = \sum_{k=1}^\infty \frac{(\ri x)^k}{k!}\,
            g\ast (f_1^{<k>}-f_2^{<k>});
\]
since
\[
    \|f_1^{<k>}-f_2^{<k>}\|_{s} \le kC^k\|f_1-f_2\|_s
        \bigl(\|f_1\|_s + \|f_2\|_s\bigr)^{k-1},
\]
it follows that
\[
    \|\Phi(f_1,g)-\Phi(f_2,g)\|_{s+t}
        \le C^2 \|M\|_{s+t}\|f_1-f_2\|_s\|g\|_t
            \exp\{C\|M\|_{s+t}(\|f_1\|_s + \|f_2\|_s)\}.
\]

We observe now that the mapping~$\Phi$ is linear in the second
argument; therefore,
\begin{align*}
    \|\Phi(f_1,g_1)-\Phi(f_2,g_2)\|_{s+t}
        &\le   \|\Phi(f_1,g_1)-\Phi(f_2,g_1)\|_{s+t}
            + \|\Phi(f_2,g_1-g_2)\|_{s+t}\\
        &\le C_1\{\|f_1-f_2\|_s + \|g_1-g_2\|_t\},
\end{align*}
where $C_1\le C^2 \|M\|_{s+t}(1+K)\exp\{C\|M\|_{s+t}K\}$ as long as
\[
    \|f_1\|_s + \|f_2\|_s  + \|g_1\|_t + \|g_2\|_t \le K,
\]
and the desired Lipschitz continuity of $\Phi$ follows.
\end{proof}

We observe that
\[
    \Phi(f,g) + g = \mathrm{V.p.}\sum_{n\in\bZ}
        \hat g(n) \exp\{\hat f(n)\ri x\} \re^{2\pi\ri nx}
\]
and hence the mapping
\[
    (f,g) \mapsto \mathrm{V.p.}\sum_{n\in\bZ}
        \hat g(n) \exp\{\hat f(n)\ri x\} \re^{2\pi\ri nx}
\]
is uniformly continuous from $W_2^s(0,1)\times W_2^t(0,1)$ into
$W_2^t(0,1)$.

Also, in the definition of $\Phi$, we can formally take $g$ to be the
unity $\delta$ of the convolution algebra~$L_2(0,1)$, which results
in~$\hat g(n)\equiv 1$. Slightly adapting the above proof, we get the
following result.

\begin{lemma}\label{lem:C.sum2}
Fix $s\in[0,1]$ and, for $f\in W_2^s(0,1)$, set
\[
    h(f) := \mathrm{V.p.}\sum_{n\in\bZ}
            [\exp\{\hat f(n)\ri x\}-1] \re^{2\pi\ri nx}.
\]
Then the function $h(f)$ belongs to~$W_2^s(0,1)$ and the mapping
$f\mapsto h(f)$ is analytic in~$W_2^s(0,1)$ and Lipschitz continuous
on bounded subsets.
\end{lemma}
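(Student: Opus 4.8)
The plan is to adapt the proof of Lemma~\ref{lem:C.Phi} to the degenerate situation in which the outer factor $g$ is replaced by the unit $\delta$ of the convolution algebra, so that $\hat g(n)\equiv1$. First I would expand the exponential into its Taylor series and interchange the order of the two summations to obtain
\[
    h(f) = \sum_{k=1}^\infty \frac{(\ri x)^k}{k!}\, f^{<k>},
\]
where $f^{<k>}$ is the $k$-fold convolution of $f$ with itself; this is precisely the series for $\Phi(f,g)$ of Lemma~\ref{lem:C.Phi} with each term $g\ast f^{<k>}$ replaced by $f^{<k>}$, which is what setting $g=\delta$ formally means. The whole point is then to show that this series converges in $W_2^s(0,1)$ and depends on $f$ in the required manner.

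The key estimate is a geometric bound on $\|f^{<k>}\|_s$. Here I would deliberately avoid tracking the increasing smoothness of successive convolutions, since iterating Proposition~\ref{pro:B.conv} would quickly push the Sobolev exponent $ks$ out of the admissible range $[0,1]$ demanded by that proposition. Instead I would load all the $s$-smoothness onto a single factor and keep the remaining $k-1$ factors in $L_2(0,1)$: writing $f^{<k>}=f\ast f^{<k-1>}$, applying Proposition~\ref{pro:B.conv} with exponents $s$ and $t=0$, and using the elementary $L_2$-bound $\|f^{<k-1>}\|_0\le C^{k-2}\|f\|_0^{k-1}$ yields
\[
    \|f^{<k>}\|_s \le C^{k-1}\|f\|_s\,\|f\|_0^{k-1} \le C^{k-1}\|f\|_s^k
\]
with $C$ the constant of Proposition~\ref{pro:B.conv}. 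Since the operator $M$ of multiplication by $\ri x$ is bounded in $W_2^s(0,1)$, with $\|M^k\|_s\le\|M\|_s^k$, the series converges absolutely in $W_2^s(0,1)$ and
\[
    \|h(f)\|_s \le C^{-1}\bigl(\exp\{C\|M\|_s\|f\|_s\}-1\bigr),
\]
so that $h(f)\in W_2^s(0,1)$.

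For analyticity and Lipschitz continuity I would repeat the reasoning of Lemma~\ref{lem:C.Phi} almost verbatim. Each summand $(\ri x)^k f^{<k>}/k!$ is a bounded homogeneous polynomial of degree $k$ in $f$, and since the series converges absolutely and uniformly on bounded subsets of $W_2^s(0,1)$, its sum is analytic. For the Lipschitz estimate I would use the telescoping identity $f_1^{<k>}-f_2^{<k>}=\sum_{j=0}^{k-1}f_1^{<j>}\ast(f_1-f_2)\ast f_2^{<k-1-j>}$, again putting the $s$-smoothness on the single factor $f_1-f_2$ and the rest in $L_2$, to get
\[
    \|f_1^{<k>}-f_2^{<k>}\|_s \le C^{k-1}\|f_1-f_2\|_s\bigl(\|f_1\|_s+\|f_2\|_s\bigr)^{k-1};
\]
summing the corresponding majorant then gives a Lipschitz constant of order $\|M\|_s\exp\{C\|M\|_s K\}$ on the ball where $\|f_1\|_s+\|f_2\|_s\le K$.

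The one place where the argument is not purely mechanical---and the step I expect to require the most care---is the interchange of summations in the very first display. In Lemma~\ref{lem:C.Phi} this rested on pointwise absolute convergence of the double series, supplied by the factor $\hat g(n)$ together with $\hat f\in\ell_2$; with $\hat g\equiv1$ this is no longer available, since $\hat f$ need not be summable. I would therefore justify the identity for $h(f)$ at the level of $W_2^s(0,1)$-convergence: truncating the principal-value sum over $n$ to $|n|\le N$ makes the interchange elementary for each fixed $N$ (yielding $\sum_k \frac{(\ri x)^k}{k!}$ times the $N$-th Fourier truncation of $f^{<k>}$), and the geometric bound above serves as a dominating majorant, so one passes to the limit $N\to\infty$ by dominated convergence in $W_2^s(0,1)$. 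Once this is in place, everything else follows the template of Lemma~\ref{lem:C.Phi}.
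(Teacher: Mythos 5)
Your route is exactly the paper's: the paper proves this lemma precisely by formally setting $g=\delta$ (so $\hat g(n)\equiv 1$) in Lemma~\ref{lem:C.Phi} and adapting that proof, and your quantitative core is sound. Loading the $W_2^s$-smoothness onto a single convolution factor to get $\|f^{<k>}\|_s\le C^{k-1}\|f\|_s\|f\|_0^{k-1}$ (Proposition~\ref{pro:B.conv} with $t=0$, which avoids leaving the admissible range of exponents), the resulting absolute convergence of $\sum_k M^kf^{<k>}/k!$ in $W_2^s(0,1)$, termwise analyticity, and the telescoping Lipschitz bound are all correct; your displayed telescoping estimate drops the factor $k$ coming from the $k$ telescoping terms, but since $k/k!=1/(k-1)!$ this is harmless and your final constant is the right one. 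You also correctly spotted the one non-mechanical point: with $\hat g\equiv1$ the pointwise absolute convergence that licensed the interchange of summations in Lemma~\ref{lem:C.Phi} is lost, since $\hat f\in\ell_2$ need not be summable.

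However, your repair of that step contains a genuine error: dominated convergence \emph{in $W_2^s(0,1)$} fails for $s\ge\tfrac12$. Writing $S_N$ for the Fourier truncation, your truncated identity reads $h_N(f)=\sum_{k\ge1}\frac{(\ri x)^k}{k!}\,S_N(f^{<k>})$, and while the terms with $k\ge2$ behave well (the coefficients $\hat f(n)^k$ then lie in $\ell_2^s(\bZ)$ proper, where $S_N$ is a contraction in the equivalent norm), the $k=1$ term is $\ri x\,S_Nf$, and $S_Nf\not\to f$ in $W_2^s(0,1)$ whenever $f$ has a nonzero $\mathbf{e}^{(1)}$-component in $\tilde\ell_2^s(\bZ)$ --- i.e.\ for generic non-periodic $f$ once $s\ge\tfrac12$. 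Concretely, for $f(x)=x\in W_2^1(0,1)$ one has $2\pi\ri n\hat f(n)=-1$ for $n\ne0$, so $(S_Nf-f)'$ equals minus the Dirichlet kernel $D_N$, of $L_2$-norm $\sqrt{2N+1}$; multiplication by $\ri x$ does not cure this, since $D_N$ concentrates at $x=1$ as well as at $x=0$, so $\ri x\,S_Nf$ actually diverges in $W_2^1(0,1)$. Nor is there a uniform majorant: the operators $S_N$ are not uniformly bounded on $W_2^s(0,1)$ for $s\ge\tfrac12$ ($\|S_N\mathbf{e}^{(1)}\|_{\ell_2^s}\to\infty$). The fix is cheap: pass to the limit $N\to\infty$ only in $L_2(0,1)$, where $S_N$ is a contraction and $S_Ng\to g$ always, so the $s=0$ instance of your geometric majorant identifies the $\mathrm{V.p.}$ sum with $\sum_k(\ri x)^kf^{<k>}/k!$ as an $L_2$-function; the membership $h(f)\in W_2^s(0,1)$ and the analytic and Lipschitz dependence then come solely from the convergence of that series in $W_2^s(0,1)$, which you have already established. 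With this one correction your proof is complete and coincides with the paper's intended argument.
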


We say that a function $f$ is \emph{odd} (resp. \emph{even}) over
$(0,1)$ (with respect to~$\tfrac12$) if it satisfies the relation $f(1-x)=-f(x)$ (resp., the
relation $f(1-x) = f(x)$) a.e.\ on~$(0,1)$. For every integrable
function $f$ we define its odd part $f_{\mathrm{odd}}$ and even
part~$f_{\mathrm{even}}$ by the equalities
\[
    f_{\mathrm{odd}}(x) := \frac{f(x)-f(1-x)}2, \qquad
    f_{\mathrm{even}}(x) := \frac{f(x)+f(1-x)}2.
\]
Since the reflection operator is continuous in every space $W_2^s(0,1)$,
$s\ge0$, the mappings $f\mapsto f_{\mathrm{odd}}$ and $f\mapsto
f_{\mathrm{even}}$ are bounded in every $W_2^s(0,1)$. We denote by
$W^s_{2,\mathrm{odd}}(0,1)$ and $W^s_{2,\mathrm{even}}(0,1)$ the
subspaces of $W_2^s(0,1)$ consisting of functions that are respectively
odd and even over $(0,1)$.

\begin{corollary}\label{cor:C.Phi}
Fix $s$ and $t$ in~$[0,1]$. Then the mappings
\[
    (f,g) \mapsto \sum_{n\in\bN}
        c_{2n}(g) \cos [2\pi nx + 2s_{2n}(f)x]
\]
from $W_{2,\mathrm{odd}}^s(0,1)\times W_{2,\mathrm{even}}^t(0,1)$
into $W_2^{t}(0,1)$ and
\[
    f \mapsto \sum_{n\in\bN}
       \{\cos [2\pi nx + 2s_{2n}(f)x] - \cos2\pi nx\}
\]
from $W_{2,\mathrm{odd}}^s(0,1)$ into $W_2^s(0,1)$ are analytic and
Lipschitz continuous on bounded subsets.
\end{corollary}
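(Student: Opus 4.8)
The plan is to reduce both maps to the functions $\Phi$ of Lemma~\ref{lem:C.Phi} and $h$ of Lemma~\ref{lem:C.sum2}, the key being to feed $2\ri f$ (rather than $f$) into their first slot. First I would record the Fourier symmetries forced by the odd/even constraints. If $f\in W_{2,\mathrm{odd}}^s(0,1)$, then $f(1-x)=-f(x)$ gives $c_{2n}(f)=0$, hence $\hat f(n)=-\ri\,s_{2n}(f)$ and $\hat f(-n)=-\hat f(n)$; consequently $2\ri f\in W_2^s(0,1)$ satisfies $\widehat{2\ri f}(\pm n)=\pm\,2s_{2n}(f)$ and $\widehat{2\ri f}(0)=0$. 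Likewise, if $g\in W_{2,\mathrm{even}}^t(0,1)$, then $g(1-x)=g(x)$ gives $s_{2n}(g)=0$, so $\hat g(\pm n)=c_{2n}(g)$ and $\hat g(0)=c_0(g)$. I stress that these relations rest only on the reflection symmetry, so they hold for complex-valued $f$ and $g$ as required for analyticity over the complex spaces.

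Next I would evaluate $\Phi(2\ri f,g)$ by grouping its principal-value sum over $\bZ$ into the symmetric pairs $\{n,-n\}$, the $n=0$ term vanishing because $\widehat{2\ri f}(0)=0$. For each $n\in\bN$ the antisymmetry $\widehat{2\ri f}(-n)=-\widehat{2\ri f}(n)$ and the symmetry $\hat g(-n)=\hat g(n)=c_{2n}(g)$ make the two complex exponentials $\re^{\pm\ri(2\pi n+2s_{2n}(f))x}$ recombine, so that the pair contributes exactly $2c_{2n}(g)\{\cos[2\pi nx+2s_{2n}(f)x]-\cos2\pi nx\}$. Summing, $\Phi(2\ri f,g)=2\sum_{n\in\bN}c_{2n}(g)\{\cos[2\pi nx+2s_{2n}(f)x]-\cos2\pi nx\}$, whence the first map equals
\[
    \tfrac12\Phi(2\ri f,g)+\sum_{n\in\bN}c_{2n}(g)\cos2\pi nx.
\]
The leftover series is nothing but $\tfrac12(g-c_0(g))$, the zero-mean part of the even function $g$, and so depends linearly---hence analytically and Lipschitz continuously---on $g\in W_2^t(0,1)$. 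Since $f\mapsto2\ri f$ is bounded linear from $W_{2,\mathrm{odd}}^s(0,1)$ into $W_2^s(0,1)$ and, by Lemma~\ref{lem:C.Phi}, $\Phi$ is analytic and Lipschitz on bounded sets from $W_2^s(0,1)\times W_2^t(0,1)$ into $W_2^{s+t}(0,1)\hookrightarrow W_2^t(0,1)$ (here $s\ge0$ and $s+t\le2$), the first map is analytic and Lipschitz continuous on bounded subsets into $W_2^t(0,1)$.

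The second map is treated identically, with $\Phi$ replaced by the function $h$ of Lemma~\ref{lem:C.sum2}, which amounts to taking $\hat g\equiv1$ in the pairing above. The same computation yields $h(2\ri f)=2\sum_{n\in\bN}\{\cos[2\pi nx+2s_{2n}(f)x]-\cos2\pi nx\}$, so the second map is precisely $\tfrac12 h(2\ri f)$; its analyticity and Lipschitz continuity on bounded subsets from $W_{2,\mathrm{odd}}^s(0,1)$ into $W_2^s(0,1)$ follow by composing the analytic, locally Lipschitz map $h$ with the bounded linear substitution $f\mapsto2\ri f$.

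The computations themselves are routine; the only genuine content is the substitution $2\ri f$, which converts the real-exponential factors $\exp\{\hat f(n)\ri x\}$ into oscillatory ones with the correct real frequency shift $2s_{2n}(f)$. The step deserving care is the $\pm n$ pairing: one must invoke the oddness of $f$ and evenness of $g$ to collapse the complex exponentials into the real cosines $\cos[2\pi nx+2s_{2n}(f)x]$ and to recognise that the residual $\cos2\pi nx$ terms reassemble into the linear term $\tfrac12(g-c_0(g))$ for the first map (and cancel after the subtraction in the second).
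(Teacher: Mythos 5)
Your proposal is correct and follows essentially the same route as the paper's own (very condensed) proof: both use the parity identities $\hat f(n)=-\ri\,s_{2n}(f)$, $\hat g(\pm n)=c_{2n}(g)$ to identify the first series with $\tfrac12\Phi(2\ri f,g)$ plus a linear term in~$g$ and the second with $\tfrac12 h(2\ri f)$, then invoke Lemmas~\ref{lem:C.Phi} and~\ref{lem:C.sum2}. Your explicit $\pm n$ pairing fills in the computation the paper leaves implicit, and your leftover term $\tfrac12\bigl(g-c_0(g)\bigr)$ is in fact the correct one for general $g\in W_{2,\mathrm{even}}^t(0,1)$, whereas the paper's $\tfrac12 g$ tacitly assumes $g$ has zero mean---a harmless discrepancy, since the difference is a bounded linear term.
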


Indeed, for an odd~$f$ we have $\hat f(n) = -\hat f(-n)$ and, as a
result,
\[
    s_{2n}(f) = \frac1{2\ri}\bigl[\hat f(-n)-\hat f(n)\bigr]
              = \ri \hat f(n);
\]
for an even $g$, we similarly have $c_{2n}(g)=\hat g(n)= \hat g(-n)$. Therefore the
two series above coincide with $\tfrac12\Phi(2\ri f,g)+\tfrac12g$ and the
function~$\tfrac12h(2\ri f)$ of Lemma~\ref{lem:C.sum2} respectively, and the
result follows.

\begin{lemma}\label{lem:C.Psi}
For $f$ and $g$ in~$L_2(0,1)$, set
\[
    \Psi(f,g):= \mathrm{V.p.}\sum_{n\in\bZ}
        (-1)^n \int_0^1 g(t) \exp\{[\pi n+ \hat f(n)]\ri(1-2t)\}\,dt\, \re^{2\pi\ri nx};
\]
then for every $s\in[0,1]$ the mapping
\[
    \Psi\,:\, L_2(0,1)\times W_2^s(0,1) \to W_2^{s}(0,1)
\]
is analytic and Lipschitz continuous on bounded subsets.
\end{lemma}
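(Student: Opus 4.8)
The plan is to reduce $\Psi$ to a convolution power series of precisely the type already treated in Lemma~\ref{lem:C.Phi}. First I simplify the general Fourier coefficient of $\Psi(f,g)$. Writing $\exp\{\pi n\ri(1-2t)\}=(-1)^n\re^{-2\pi n\ri t}$, the factor $(-1)^n$ produced here cancels the one standing in front of the sum, so that the $n$-th Fourier coefficient of $\Psi(f,g)$ equals
\[
    c_n:=\int_0^1 g(t)\,\re^{-2\pi n\ri t}\exp\{\hat f(n)\ri(1-2t)\}\,dt.
\]
Expanding the inner exponential into its Taylor series and integrating term by term, and denoting by $N$ the operator of multiplication by $\ri(1-2x)$, I obtain
\[
    c_n=\sum_{k=0}^\infty\frac1{k!}\,\hat f(n)^k\,\widehat{N^kg}(n)
       =\sum_{k=0}^\infty\frac1{k!}\,\widehat{f^{<k>}}(n)\,\widehat{N^kg}(n),
\]
where I used that $\hat f(n)^k$ is the $n$-th Fourier coefficient of the $k$-fold convolution $f^{<k>}$. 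Since $\widehat{u\ast v}=\hat u\,\hat v$, matching Fourier coefficients yields the representation
\[
    \Psi(f,g)=\sum_{k=0}^\infty\frac1{k!}\,f^{<k>}\ast(N^kg),
\]
with the convention $f^{<0>}\ast h=h$; the rearrangement is legitimate because the function series on the right converges absolutely in $L_2(0,1)$, as shown next.

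With this representation the lemma follows from the same estimates as in Lemma~\ref{lem:C.Phi}, now with $f\in L_2(0,1)=W_2^0(0,1)$ playing the role of the factor of zero smoothness and $g\in W_2^s(0,1)$ that of smoothness $s$. The operator $N$ of multiplication by $\ri(1-2x)=\ri-2(\ri x)$ is a linear combination of the identity and the operator of multiplication by $\ri x$, both bounded in every $W_2^s(0,1)$, $s\in[0,1]$, by the remark following Proposition~\ref{pro:interp}; hence $\|N^kg\|_s\le\|N\|_s^k\|g\|_s$. Iterating Proposition~\ref{pro:B.conv} gives $\|f^{<k>}\|_0\le C^{k-1}\|f\|_0^k$, and then
\[
    \bigl\|f^{<k>}\ast(N^kg)\bigr\|_s
        \le C\,\|f^{<k>}\|_0\,\|N^kg\|_s
        \le \bigl(C\|N\|_s\|f\|_0\bigr)^k\|g\|_s .
\]
Summing the factorial series shows that the series for $\Psi(f,g)$ converges absolutely in $W_2^s(0,1)$ with $\|\Psi(f,g)\|_s\le\|g\|_s\exp\{C\|N\|_s\|f\|_0\}$, so $\Psi$ indeed maps $L_2(0,1)\times W_2^s(0,1)$ into $W_2^s(0,1)$.

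Analyticity and Lipschitz continuity are then obtained verbatim as in Lemma~\ref{lem:C.Phi}. Each summand $\tfrac1{k!}f^{<k>}\ast(N^kg)$ is homogeneous of degree $k$ in $f$ and linear in $g$, hence analytic; since the series converges absolutely and uniformly on bounded subsets, the sum $\Psi$ is analytic. For Lipschitz continuity I use linearity of $\Psi$ in $g$ together with the telescoping estimate $\|f_1^{<k>}-f_2^{<k>}\|_0\le kC^k\|f_1-f_2\|_0(\|f_1\|_0+\|f_2\|_0)^{k-1}$ from Lemma~\ref{lem:C.Phi}, which after summation yields the desired Lipschitz bound on every bounded subset.

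The only genuinely new ingredient, and hence the step to carry out most carefully, is the algebraic reduction of $\Psi$ to the convolution series above --- in particular the cancellation of the $(-1)^n$ factors together with the phase $\pi n$, and the identification of $N$ as a bounded multiplier on $W_2^s(0,1)$. Once this reduction is in place, the convergence, analyticity, and Lipschitz estimates are a direct repetition of those in the proof of Lemma~\ref{lem:C.Phi}.
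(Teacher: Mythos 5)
Your proposal is correct and follows essentially the same route as the paper: you reduce the $n$-th coefficient to $\int_0^1 g(t)\exp\{\hat f(n)\ri(1-2t)\}\re^{-2\pi\ri nt}\,dt$, expand the exponential to get $\Psi(f,g)=\sum_{k\ge0}f^{<k>}\ast(M_1^kg)/k!$ (your $N$ is the paper's $M_1$), and then run exactly the convolution estimates of Proposition~\ref{pro:B.conv} and the analyticity/Lipschitz arguments of Lemma~\ref{lem:C.Phi}. Your write-up is in fact slightly more explicit than the paper's, which leaves the Lipschitz step to ``the usual manner.''
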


\begin{proof}
The coefficient of $\re^{2\pi\ri nx}$ in the above series for~$\Psi$
can be written as
\begin{equation}\label{eq:C.c2n}
    \int_0^1 g(t) \exp\{\hat f(n) \ri(1-2t)\} \re^{-2\pi\ri nt}\,dt
\end{equation}
and gives the $n$-th Fourier coefficient of the function
 \(
    h:=\sum_{k=0}^\infty {h_k}/{k!},
 \)
with $h_0:=g$, $h_k:= f^{<k>} \ast M_1^k g$ for $k\ge1$, and $M_1$
being the operator of multiplication by the function~$\ri(1-2t)$,
i.e., $\Psi(f,g)=h$. Since $h_k$ belongs to $W_2^s(0,1)$ and its
norm there obeys the estimate
\[
    \|h_k\|_s \le C^k(\|f\|_0)^k\|M_1\|_s^k\|g\|_s
\]
with $C$ being the constant of Proposition~\ref{pro:B.conv} and
$\|M_1\|_s$ denoting the norm of the operator $M_1$ in the
space~$W_2^s(0,1)$, we conclude that the mapping $\Psi$ is analytic.
Its Lipschitz continuity on bounded subsets is established in the usual
manner.
\end{proof}

As above, by taking an odd~$f$ and an even~$g$ of zero mean, we see
that the expressions of~\eqref{eq:C.c2n} for $n$ and $-n$ coincide with
\begin{equation}\label{eq:C.2}
    2(-1)^n \int_0^1 g(t) \cos\{[\pi n +s_{2n}(-\ri f)](1-2t)\}\,dt
\end{equation}
and give the $2n$-th cosine Fourier coefficient of the
function~$h=:\Psi(f,g)\in W^s_{2,\mathrm{even}}(0,1)$.


\end{document}